\definecolor{darkred}{rgb}{0.9,0.1,0.1}
\def\DD{\mathcal{D}}
\def\D{\mathscr{D}}
\def\F{\mathscr{F}}
\def\XX{{\boldsymbol X}}
\def\PPsi{{\boldsymbol\Psi}}
\def\E{\mathbf{E}}
\def\/{|\!|\!|}
\begin{document}

%\date{}
\title{Rough Stochastic PDEs}
\author{M.~Hairer}
\institute{Mathematics Department, University of Warwick
 \\ \email{M.Hairer@Warwick.ac.uk}}
\titleindent=0.65cm

\maketitle
\thispagestyle{empty}

\begin{abstract}
In this article, we show how the theory of rough paths can be used to provide a notion
of solution to a class of nonlinear stochastic PDEs of Burgers type that exhibit too high spatial roughness for 
classical analytical methods to apply. In fact, the class of SPDEs that we consider is genuinely
ill-posed in the sense that different approximations to the nonlinearity may converge to different limits. 
Using rough paths theory, a pathwise notion of solution to these SPDEs is formulated, and we show that 
this yields a well-posed problem, which is stable under a large class of perturbations, including
the approximation of the rough driving noise by a mollified version and the addition of hyperviscosity. 

We also show that under certain structural assumptions on the coefficients, the SPDEs under consideration
generate a reversible Markov semigroup with respect to a diffusion measure that can be given
explicitly.
\end{abstract}

\section{Introduction}

This article is devoted to the study of the following class of Burgers-like SPDEs:
\begin{equ}[e:Burgers]
du = \d_x^2 u\,dt + f(u)\,dt + g(u)\,\d_x u\,dt + \sigma\,dW(t)\;.
\end{equ}
Here, the spatial variable $x$ takes values in $[0,2\pi]$, the linear operator $\d_x^2$ is endowed with periodic boundary
conditions, $u$ takes values in $\R^n$, and $f\colon \R^n \to \R^{n}$, $g\colon \R^n \to \R^{n\times n}$ are $\CC^\infty$ 
functions. We assume that the driving noise $W$ gives rise to space-time white noise; in other words that $W$ is a standard 
cylindrical Wiener process on $\L^2([0,2\pi], \R^n)$ \cite{DaPrato-Zabczyk92}. One motivation for studying such equations arises from the theory
of path sampling: for $f$ and $g$ of some specific form, \eref{e:Burgers} does \textit{formally} arise as a gradient
system with the law of a diffusion process as invariant measure, see \cite{HairerStuartVoss07} and Section~\ref{sec:IM} below.

The problem with \eref{e:Burgers} that we address in this article is that of making sense of the nonlinearity $g(u)\, \d_x u$ in equations of this type.
To appreciate the difficulty of the problem, we note that the solution $\psi$ to the linearised equation
\begin{equ}[e:linear]
d\psi = \d_x^2 \psi\,dt  + \sigma\,dW(t)\;,
\end{equ}
is not differentiable in $x$ for fixed $t$. Actually, these solutions have a spatial regularity akin to the temporal
regularity of Brownian motion: they are almost surely $\alpha$-H\"older continuous for every $\alpha < {1\over 2}$, but
not more in the sense that they are almost surely \textit{not} ${1\over 2}$-H\"older continuous \cite{Walsh}.

This usually doesn't cause any serious problem: the standard procedure in this case is to consider weak (in the PDE sense)
solutions of the form
\begin{equ}
d\scal{\phi,u} = \scal{\d_x^2 \phi,u}\,dt + \scal{\phi,f(u)}\,dt  + \scal{\phi,g(u)\,\d_x u}\,dt + \sigma\,\scal{\phi,dW(t)}\;,
\end{equ}
for sufficiently regular test functions $\phi$ and to make sense of the term $\scal{\phi,g(u)\,\d_x u}$ by performing
one integration by parts. However, this is only possible if there exists a function $G\colon \R^n \to \R^n$ such that 
$g = DG$. Assuming the existence of such a function $G$ 
would impose non-trivial structural conditions on $g$ as soon as $n > 1$, which is not something that we wish to do.
Now if it were the case that, for fixed $t>0$, $u$ was $\alpha$-H\"older continuous for some exponent $\alpha$ strictly
\textit{greater} than ${1\over 2}$, then we could rewrite the nonlinearity in the suggestive form
\begin{equ}[e:nonlin]
\scal{\phi,g(u)\,\d_x u} = \int_0^{2\pi} \phi(x) g\bigl(u(x)\bigr)\,du(x)
\end{equ} 
and interpret this integral as a simple Riemann-Stieltjes integral. By Young's theory of integration \cite{Young}, this
expression would indeed be well-defined in this case. Unfortunately, as already mentioned, we expect our solutions to fall
just slightly short of this kind of regularity, so that there is a priori no obvious way in which to make sense of \eref{e:nonlin}.
From this perspective, the problem at hand is very strongly reminiscent of the problem of making sense of solutions to ordinary 
stochastic differential equations. Actually, similarly to the case of SDEs, different numerical approximations to \eref{e:Burgers}
converge to different solutions, which differ by a correction term similar to the classical It\^o-Stratonovich correction term,
see \cite{Numeric} for a numerical exploration of this phenomenon.

Motivated by this observation, let us try to apply the standard theory of stochastic integration to this problem.
For this, we need to first specify what type of stochastic integral we wish to consider. Since we would like to recover
the usual concept of weak / mild solutions for the Burgers equations in the case where $g$ is a total derivative,
it is natural to look for a kind of `Stratonovich integral' interpretation of \eref{e:nonlin}.
Since we expect $u$ to behave like $\psi$ at `small scales', it is arguably sufficient to make sense of the expression
\begin{equ}[e:nonlin2]
\scal{\phi,g(u)\,\d_x \psi} = \int_0^{2\pi} \phi(x) g\bigl(u(x)\bigr)\circ d\psi(x)\;.
\end{equ} 
This seems promising since it can easily be seen that for fixed $t$, the law of $\psi$ differs from that of a Brownian bridge
only by the addition of some random $\CC^\infty$ function. The problems with this approach seem twofold:
\begin{enumerate}
\item There is no `arrow of time'. In particular, the process $g\bigl(u(x)\bigr)$ is not adapted with respect to the filtration
generated by $\psi$.
\item For any fixed $t>0$, both $\psi(\cdot,t)$ and $u(\cdot,t)$ have a very complicated dependence on the driving space-time
white noise for times $s < t$. This would make it a highly non-trivial task to develop a Malliavin calculus of $u$ with respect to $\psi$
and to study the dependence of this calculus on the time parameter $t$.
\end{enumerate}
All of these problems can be solved in an elegant way if the integral in \eref{e:nonlin2} can be interpreted in a pathwise sense.
The theory of rough paths developed by Lyons \cite{Lyons} provides just such an interpretation! The twist here is that 
we will use the theory of rough paths in order to make sense of a driving noise that leads to solutions that are
rough \textit{in space} rather than in time.

One may wonder at this stage whether the notion of solution to \eref{e:Burgers} given by rough paths theory is in any way
natural. This question will be answered by the affirmative in two different ways. First, it is natural to consider a smoothened
version of \eref{e:Burgers} where the noise $W$ is hit by a mollifier with lengthscale $\eps>0$ and to study the limit
$\eps > 0$. We will see in Section~\ref{sec:stability} that the stability properties of our solution, together with known approximation
 results for Gaussian rough paths, imply that the sequence of classical solutions obtained in this way does indeed
 converge as $\eps \to 0$ to the solution constructed in this article.
Secondly, we will come back to the original motivation for the study of \eref{e:Burgers}, which is 
to provide an SPDE with invariant measure given by a certain diffusion process. We will show in
Section~\ref{sec:IM} that it is indeed the case that if we consider \eref{e:Burgers} with a particular structure for
the nonlinearities $f$ and $g$ derived formally in \cite{HairerStuartVoss07}, then the process constructed in this article is reversible
with respect to the expected invariant measure.

It is of course not the first time that the theory of rough paths has been applied to stochastic PDEs. To our knowledge,
three groups of authors have considered such problems in quite different contexts. Friz and coauthors showed in
\cite{PeterMich,Oberhaus} that rough paths theory can be used to provide meaning (and solutions) to a class of nonlinear stochastic
PDEs via the method of stochastic characteristics. This is essentially a variant of the type of problems that have been
considered by Souganidis and Lions \cite{LionsSoug}, and the emphasis in these problems is the treatment of
temporally rough driving signals. Concurrently, Gubinelli and Tindel developed a theory of stochastic PDEs
driven by rough paths which allows to treat semilinear problems of `Da Prato \& Zabczyk type', see \cite{MaxSam}. There, the 
emphasis is not just on treating temporally rough driving noise, but also on understanding the interplay between
temporal and spatial regularity. This theory is based on the ideas developed in \cite{Max}, combined with the insights 
obtained in the more regular case in \cite{GubLejTin}, but it relies on classical Sobolev calculus to treat the spatial
roughness of the solutions. Finally, a more recent result was obtained by Teichmann \cite{Josef}, 
where Sz\H okefalvi-Nagy's dilation theorem for contraction semigroups is used to provide a simple and elegant way of constructing
solutions to a class of semilinear SPDEs when the corresponding linear problem generates a semigroup of contractions
on a Hilbert space. We also refer to the works \cite{GubKdV,BrGuNe} for examples of \textit{deterministic} PDEs that can
be tackled using rough paths theory.

The main novelty of the present work is the ability to give meaning to a class of stochastic PDEs 
such that the \textit{deterministic part} of the equation does not have any classical meaning.
While this has been achieved in a number of equations using renormalisation techniques 
\cite{MR815192,MR1462228,MR1743612,MR2016604}, to the 
best of our knowledge, this is the first time that rough paths theory is used in such an endeavour. 
The advantage of rough paths theory in that context is that it allows to treat nonlinearities that do not exhibit a
`polynomial' structure, as is required by renormalisation techniques.
It is also the first time that rough path theory is used to provide meaning
to an equation which is classically ill-posed due to a lack of \textit{spatial} regularity, rather than a lack of
temporal regularity.

The remainder of this article is organised as follows. First, in Section~\ref{sec:roughpaths}, we give a short overview of those
elements of rough path theory that are being used in this work. While this is of course by no means a general introduction
to the theory (we refer for this to the monographs \cite{MR2036784,PeterBook} and the lecture notes \cite{MR2314753}), 
it is intended to be sufficiently self-contained so that even a reader without prior knowledge of rough paths theory
should be able to follow the subsequent arguments. Section~\ref{sec:defSol} provides the definition of a solution to
\eref{e:Burgers}, as well as the proof that this equation is locally well-posed (globally if $f$ and $g$ are sufficiently bounded)
and that its solutions are stable with respect to perturbations in the initial condition and the driving noise.
In Section~\ref{sec:IM}, we then show that under the structural assumptions derived in \cite{HairerStuartVoss07}, one
can explicitly exhibit an invariant measure for \eref{e:Burgers}, and the corresponding Markov process is reversible.
Finally, Section~\ref{sec:uniformExp} contains a uniform exponential integrability result which is essential 
in the proofs of Section~\ref{sec:IM}.

\subsection{Notations}

We denote by $\CC^\alpha$ the space of all $\alpha$-H\"older continuous functions on $[0,2\pi]$ and by $\|\cdot\|_\alpha$
the corresponding seminorm, namely
\begin{equ}
\|u\|_\alpha = \sup_{x\neq y} {|u(x) - u(y)|\over |x-y|^\alpha}\;.
\end{equ}
We will also make a slight abuse of notation by writing $\|\cdot\|_\infty$ for the supremum norm and 
we set $\|u\|_{\CC^\alpha} = \|u\|_\alpha + \|u\|_\infty$ which, on bounded intervals, is also equivalent to $\|u\|_\alpha + |u(0)|$.
For integer values of $n$, we set $\|u\|_{\CC^n} = \sum_{k=0}^n \|D^k u\|_\infty$, where $D^ku$ denotes the $k$th 
derivative of $u$.

\subsection*{Acknowledgements}

{\small
I am grateful to Peter Friz, Massimilliano Gubinelli, Terry Lyons, and Josef Teichmann for
introducing the theory of rough paths to me and for several fruitful discussions about this work.
Special thanks are due to Andrew Stuart for introducing me to the path sampling
problems that were the original motivation for looking at the equations considered in this article.
Finally, I would like to thank Hendrik Weber for his careful reading of a draft version of the manuscript.
Financial support was kindly provided by the EPSRC through grants EP/E002269/1 and EP/D071593/1, as
well as by the Royal Society through a Wolfson Research Merit Award.
}

\section{Elements of rough path theory}
\label{sec:roughpaths}

We will mostly make use of the notations introduced by Gubinelli in \cite{Max} since the estimates given in that work 
seem to be the ones that are most 
suitable for the present undertaking. This is because Gubinelli essentially builds a theory of integration
for quite general integrands against a given rough path, whereas Lyons mostly considers integrands that are
the composition of a smooth (local!) function with the rough path. This restriction could in principle be overcome by a 
slight reformulation of the problem (just as it can be overcome when one wishes to use the theory to solve SDEs), 
but this appears to be more cumbersome in our setting.

We denote by $\CC_2([0,T], \R^d)$ the space of continuous functions
from $[0,T]^2$ into $\R^d$ that vanish on the diagonal. Very often, we will omit the time 
interval $[0,T]$ and the target space $\R^d$ in our notations  for the
sake of simplicity. We also define a difference operator $\delta\colon \CC \to \CC_2$ by
\begin{equ}
\delta X_{s,t} = X_t - X_s\;.
\end{equ}

A rough path on an interval $[0,T]$ then consists of two parts: a continuous function
$X \colon [0,T] \to \R^n$, as well as a continuous `area process' $\XX \colon [0,T]^2 \to \R^{n \times n}$ such that
$\XX_{t,t} = 0$ for every $t$ and such that the algebraic relations
\begin{equ}[e:constr]
\XX^{ij}_{s,t} - \XX^{ij}_{u,t} -\XX^{ij}_{s,u} =  \delta X^i_{s,u}\delta X^j_{u,t}\;,
\end{equ}
hold for every triple of times $(u,s,t)$ and every pair of indices $(i,j)$.
One should think of $\XX$ as `postulating' the value of the quantity
\begin{equ}[e:intX]
\int_s^t \delta X^i_{s,r}\,dX^j_r \eqdef \XX^{ij}_{s,t} \;,
\end{equ}
where we take the right hand side as a \textit{definition} for the left hand side. (And not the other way around!)
Note that the algebraic relations \eref{e:constr} are by themselves not sufficient to determine $\XX$ as a function
of $X$. Indeed, for any 
matrix-valued function $F$, the substitution $\XX^{ij}_{s,t} \mapsto \XX^{ij}_{s,t} + F^{ij}_t - F^{ij}_s$
leaves the left hand side of \eref{e:constr} invariant.
The aim of imposing \eref{e:constr} is to ensure that \eref{e:intX} does indeed behave
like an integral when considering it over two adjacent intervals.

For $\alpha \in (0,{1\over 2})$, we will denote by $\DD^\alpha$ the space of those rough paths $(X,\XX)$
such that $X \in \CC^\alpha$ and
\begin{equ}[e:Holder]
\|\XX\|_{2\alpha} :=  \sup_{s \neq t \in [0,T]} {|\XX_{s,t}|\over |t-s|^{2\alpha}} < \infty\;.
\end{equ}
At this stage, it is important to note that while it is a closed subset of a vector space, the space 
$\DD^\alpha$ is \textit{not} itself a vector space because of the nonlinear
constraint \eref{e:constr}. One rather unpleasant consequence of this fact is that the natural norm on $\DD^\alpha$ 
given by $\|X\|_{\CC^\alpha} + \|\XX\|_{2\alpha}$ does not
reflect its geometry, since the natural
dilatation on $\CD^\alpha$ is given by $(X,\XX) \mapsto (\lambda X, \lambda^2 \XX)$. Note also that the quantities defined in
\eref{e:Holder} are merely seminorms since they vanish for constants.

%\begin{remark}
%A natural `H\"older distance'
%function $d_\alpha$ on $\DD^\alpha$ is given by
%\begin{equs}
%d_\alpha(X,\bar X) &=  \sup_{s \neq t \in [0,T]} \Big({|X(t) - X(s) - \bar X(t) + \bar X(s)| \over |t-s|^\alpha} \\
%&\qquad + { \sqrt{|\bar \XX(s,t) - \XX(s,t) + \bigl(\bar X(t)-\bar X(s)\bigr)\otimes \bigl(\bigr)|} \over |t-s|^\alpha}\Big)\;.
%\end{equs}
%This distance has the advantage of correctly reflecting the nonlinear structure of $\DD^\alpha$ and in particular,
%it behaves as expected under the natural dilatations. 
%See for example the recent monograph \cite{PeterBook} for more details. However, the topologies generated by $d_\alpha$ and
%by the `norm' described above are equivalent. In particular, one has the bounds
%\begin{equ}[e:equiv]
%....
%\end{equ}
%In the sequel, we will always work with the quantities defined in \eref{e:Holder}, as they allow for simple expressions
%of the various bounds involved. We will use \eref{e:equiv} for the sole purpose of recovering various continuity
%results from \cite{PeterBook}.
%\end{remark}

\subsection{Controlled rough paths}

Another important notion taken from \cite{Max} is that of a path $Y$ \textit{controlled} by a rough path $X$.
Given a rough path $X \in \DD^\alpha([0,T], \R^d)$, we say that a pair of functions $(Y,Y') \in \CC^\alpha_X([0,T], \R^m)$ 
is a rough path controlled by $X$ if 
$Y \in \CC^\alpha([0,T], \R^m)$,
$Y' \in \CC^\alpha([0,T], \R^{m\times d})$, and the `remainder term' $R$ given by
\begin{equ}[e:defYR]
R_{s,t} = \delta Y_{s,t} - Y'_s\, \delta X_{s,t}\;,
\end{equ}
satisfies $\|R\|_{2\alpha} < \infty$. Here, $R_{s,t} \in \R^m$ and the second term is  a matrix-vector multiplication. 
We endow the space $\CC^\alpha_X$ with the norm
\begin{equ}[e:normcontrol]
\|(Y,Y')\|_{X,\alpha} = \|Y\|_{\CC^\alpha} + \|Y'\|_{\CC^\alpha} + \|R\|_{2\alpha}\;.
\end{equ}
Note that since we assumed that $X$ is $\alpha$-H\"older continuous, it immediately follows from these definitions
that the same is true for $Y$ with
\begin{equ}
\|Y\|_\alpha \le C\|R\|_{2\alpha} + \|Y'\|_\infty \|X\|_\alpha\;.
\end{equ}
The term $\|Y\|_{\CC^\alpha}$ in \eref{e:normcontrol} is therefore used only to control the supremum of $Y$.

\begin{remark}
We will sometimes make an abuse of notation and simply write $\|Y\|_{X,\alpha}$ instead
of the more correct expression $\|(Y,Y')\|_{X,\alpha}$. Since $Y'$ will always be constructed from $X$ and
regular functions by using the rules laid out in the next subsections, this will hopefully not cause any confusion.
\end{remark}

Note that in general, there could be many `derivative processes' $Y'$ associated to a given path $Y$. 
However,  if for some given $s \in (0,T)$
there exists a sequence of times $t_n \to s$ such that $|X_{t_n} - X_s| / |t_n-s|^{2\alpha} \to \infty$, then $Y'_s$ is uniquely
determined from $Y$ by \eref{e:defYR} and the condition that $\|R\|_{2\alpha} < \infty$. 
In most cases of interest, such as when it is given by the sample path of a (fractional) Brownian motion,
the function $X$ will have this property at a dense set of points, thus determining $Y'$ uniquely as a function of $Y$.

In the sequel, we will sometimes omit to explicitly mention the derivative process $Y'$. We hope that this will not cause any ambiguity 
since all the controlled paths that we are going to consider will be constructed using the following list of operations.

\subsubsection{Canonical lift of $X$}
\label{sec:canlift}

It is easy to see that the process $X$ itself can be interpreted as a process `controlled by $X$'. 
Indeed, we can identify $X$ with the element $(X, I) \in \CC_X^\alpha$, where $I$ is the process which is 
equal to the identity matrix for all times.

\subsubsection{Lifting of regular functions.}
\label{sec:lift}

There is a canonical embedding $\iota\colon \CC^{2\alpha} \hookrightarrow \CC^\alpha_X$ given by $\iota Y = (Y,0)$, 
since in this case $R = \delta Y$ does indeed satisfy $\|R\|_{2\alpha} < \infty$ (recall that we are only interested in the case
$\alpha < {1\over 2}$). 
If one actually has $Y \in \CC^1$, then one can define the integral of $Y$ against $X$ by setting 
\begin{equ}[e:defintYX]
Z_t = \int_0^t Y_s\otimes dX_s = \dot Y_t \otimes X_t - \dot Y_0 \otimes X_0 -  \int_0^t \dot Y_s \otimes X_s\,ds\;,
\end{equ}
where $\dot Y$ denotes the time derivative of $Y$. One can check quite easily that this integral has the property that
$\bigl(Z, Y\otimes I\bigr)$, where $I$ is the identity matrix, is itself a controlled rough path belonging to $\CC^\alpha_X$.

\subsubsection{Composition with regular functions.}
\label{sec:compos}

Let $\phi \colon \R^m \times [0,T] \to \R^n$ be a function which is uniformly $\CC^2$ in its first argument (i.e.\ $\phi$
is bounded and both $D_y \phi$ and $D_y^2 \phi$ are bounded, where $D_y$ denotes the derivative with respect to
the first argument) and uniformly 
$\CC^{2\alpha}$ in its second argument. Let furthermore $(Y,Y')\in \CC^\alpha_X([0,T], \R^m)$, then one can define
a controlled path $(\phi(Y), \phi(Y)')\in \CC^\alpha_X([0,T], \R^n)$ by
\begin{equ}[e:defphiY]
\phi(Y)_t = \phi(Y_t, t)\;,\qquad
\phi(Y)_t' = D_y \phi(Y_t, t)Y'_t\;.
\end{equ}
(Here, the path $\phi'(Y)Y'$ is to be interpreted as the pointwise an $n\times m$ and an $m \times d$ 
matrix-valued path.)
It is straightforward to check that the corresponding remainder term does indeed satisfy the required bound.
It is also straightforward to check that this definition is consistent in the sense that $(\phi \circ \psi)(Y,Y') = \phi(\psi(Y,Y'))$.
Furthermore, we have the bound:

\begin{lemma}\label{lem:comp}
Let $\phi$ be as above, let $(Y,Y')\in \CC^\alpha_X([0,T], \R^m)$, and let $(\phi(Y), \phi(Y)')\in \CC^\alpha_X([0,T], \R^n)$ 
be given by \eref{e:defphiY}. Then, there exists a constant $C$ such that one has the bound
\begin{equ}
\|\phi(Y)\|_{X,\alpha} \le C \bigl(\|D_y^2\phi\|_\infty + \|\phi\|_\infty + \|\phi\|_{2\alpha;t}\bigr) \bigl(1+\|Y\|_{X,\alpha}\bigr)^2\;,
\end{equ}
where we denote by $\|\phi\|_{2\alpha;t}$ the supremum over $y$ of the $2\alpha$-H\"older norm of $\phi(y,\cdot)$. 
\end{lemma}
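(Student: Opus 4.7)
The plan is to control each of the three pieces in the definition of $\|\phi(Y)\|_{X,\alpha}$ separately, namely $\|\phi(Y)\|_{\CC^\alpha}$, $\|\phi(Y)'\|_{\CC^\alpha}$, and $\|R^\phi\|_{2\alpha}$, where $R^\phi_{s,t} = \delta\phi(Y)_{s,t} - D_y\phi(Y_s,s)\,Y'_s\,\delta X_{s,t}$. The quadratic factor $(1+\|Y\|_{X,\alpha})^2$ should be tracked back to a Taylor remainder of order $|\delta Y_{s,t}|^2$. I would also implicitly use the interpolation inequality $\|D_y\phi\|_\infty \lesssim \|\phi\|_\infty + \|D_y^2\phi\|_\infty$ so that every constant arising through $\|D_y\phi\|_\infty$ is absorbed into the bracketed coefficient.

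For $\|\phi(Y)\|_{\CC^\alpha}$, I would write $\phi(Y_t,t) - \phi(Y_s,s) = [\phi(Y_t,t) - \phi(Y_s,t)] + [\phi(Y_s,t) - \phi(Y_s,s)]$, bounding the first bracket by $\|D_y\phi\|_\infty \|Y\|_\alpha |t-s|^\alpha$ and the second by $\|\phi\|_{2\alpha;t} |t-s|^{2\alpha}$. The supremum bound is immediate from $\|\phi\|_\infty$. For $\|\phi(Y)'\|_{\CC^\alpha}$, the formula $\phi(Y)'_t = D_y\phi(Y_t,t)Y'_t$ is handled analogously, using a three-term splitting
\begin{equation*}
D_y\phi(Y_t,t)Y'_t - D_y\phi(Y_s,s)Y'_s = [D_y\phi(Y_t,t)-D_y\phi(Y_s,t)]Y'_t + [D_y\phi(Y_s,t)-D_y\phi(Y_s,s)]Y'_t + D_y\phi(Y_s,s)(Y'_t-Y'_s),
\end{equation*}
each term controlled by $\|D_y^2\phi\|_\infty \|Y\|_\alpha \|Y'\|_\infty |t-s|^\alpha$, the temporal regularity of $D_y\phi$, and $\|D_y\phi\|_\infty \|Y'\|_\alpha |t-s|^\alpha$ respectively. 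The product structure already produces a factor bilinear in $\|Y\|_{X,\alpha}$ and $\|Y'\|_{\CC^\alpha}$.

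The key step is bounding $R^\phi$. I would decompose
\begin{equation*}
\delta\phi(Y)_{s,t} = [\phi(Y_t,t) - \phi(Y_t,s)] + [\phi(Y_t,s) - \phi(Y_s,s)],
\end{equation*}
and crucially anchor a first-order Taylor expansion in $y$ at the point $(Y_s,s)$, giving
\begin{equation*}
\phi(Y_t,s) - \phi(Y_s,s) = D_y\phi(Y_s,s)\,\delta Y_{s,t} + O\bigl(\|D_y^2\phi\|_\infty |\delta Y_{s,t}|^2\bigr).
\end{equation*}
Substituting $\delta Y_{s,t} = Y'_s\,\delta X_{s,t} + R^Y_{s,t}$, the linear term $D_y\phi(Y_s,s)Y'_s\,\delta X_{s,t}$ cancels exactly against the subtracted quantity in $R^\phi$, leaving
\begin{equation*}
R^\phi_{s,t} = [\phi(Y_t,t) - \phi(Y_t,s)] + D_y\phi(Y_s,s)\,R^Y_{s,t} + O\bigl(\|D_y^2\phi\|_\infty |\delta Y_{s,t}|^2\bigr).
\end{equation*}
The three contributions are each $O(|t-s|^{2\alpha})$: the first by definition of $\|\phi\|_{2\alpha;t}$, the second by $\|D_y\phi\|_\infty \|R^Y\|_{2\alpha}$, and the third by $\|Y\|_\alpha^2 |t-s|^{2\alpha}$, which is where the squaring of $\|Y\|_{X,\alpha}$ enters.

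The main obstacle (and the reason this anchoring choice matters) is that we have no Hölder-in-$t$ control on $D_y\phi$ priced into $\|\phi\|_{2\alpha;t}$; by placing the base point at $(Y_s,s)$ rather than at $(Y_s,t)$, all the $t$-dependence is pushed into the term $\phi(Y_t,t)-\phi(Y_t,s)$, which involves only $\phi$ itself and is therefore controlled by $\|\phi\|_{2\alpha;t}$. The remaining bookkeeping is routine: assembling the three component bounds and noting that the worst factor is the quadratic one from $|\delta Y_{s,t}|^2$ yields the claimed estimate with prefactor $\|D_y^2\phi\|_\infty + \|\phi\|_\infty + \|\phi\|_{2\alpha;t}$ and multiplicative term $(1+\|Y\|_{X,\alpha})^2$.
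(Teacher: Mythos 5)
Your treatment of $\|\phi(Y)\|_{\CC^\alpha}$ and of the remainder $R^\phi$ matches the paper's argument (same splitting of the increment, same anchoring of the Taylor expansion at $(Y_s,s)$ so that the pure time increment is carried by $\phi$ itself). The gap is in the bound on $\|\phi(Y)'\|_{\CC^\alpha}$. In your three-term splitting, the middle term $[D_y\phi(Y_s,t)-D_y\phi(Y_s,s)]Y'_t$ is said to be "controlled by the temporal regularity of $D_y\phi$" — but no such regularity is assumed: the hypotheses only give $\phi$ uniformly $\CC^{2\alpha}$ in $t$ and $D_y\phi$, $D_y^2\phi$ bounded, and the constant in the lemma is only allowed to involve $\|\phi\|_\infty$, $\|D_y^2\phi\|_\infty$ and $\|\phi\|_{2\alpha;t}$. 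Indeed you yourself point out, when treating $R^\phi$, that there is no H\"older-in-$t$ control on $D_y\phi$ built into $\|\phi\|_{2\alpha;t}$; invoking it here is therefore an unresolved step, not routine bookkeeping. The Landau-type interpolation $\|D_y\phi\|_\infty \lesssim \|\phi\|_\infty + \|D_y^2\phi\|_\infty$ that you do mention only handles the $y$-variable and does not touch this issue.

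This is exactly the point the paper addresses first, with the interpolation inequality \eref{e:interp}, namely $\|D_y\phi\|_{\alpha;t}^2 \le C\,\|D_y^2\phi\|_\infty\,\|\phi\|_{2\alpha;t}$: if $D_y\phi(x,\cdot)$ had a large increment of size $\eps$ between times $s$ and $t$, then by the boundedness of $D_y^2\phi$ the same increment is at least $\eps/2$ on a $y$-interval of length $\eps/(4\|D_y^2\phi\|_\infty)$, and integrating in $y$ produces a second difference of $\phi$ of order $\eps^2/\|D_y^2\phi\|_\infty$, which is bounded by $\|\phi\|_{2\alpha;t}|t-s|^{2\alpha}$. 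This gives the needed $\alpha$-H\"older-in-$t$ modulus of $D_y\phi$ with a constant of the admissible form (note the square root $\sqrt{\|D_y^2\phi\|_\infty\|\phi\|_{2\alpha;t}}$ is dominated by the sum appearing in the statement). Supplying this lemma-within-the-lemma, and then running your splitting of $\delta(\phi(Y)')_{s,t}$ with the middle term bounded by $\|D_y\phi\|_{\alpha;t}\|Y'\|_\infty|t-s|^\alpha$, closes the argument; without it, the estimate on the derivative component of the controlled-path norm is not justified.
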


\begin{proof}
We start by showing that there exists a constant $C$ such that 
\begin{equ}[e:interp]
\|D_y\phi\|_{\alpha;t}^2 \le C \|D^2\phi\|_\infty \|\phi\|_{2\alpha;t}\;.
\end{equ}
We consider the case $m=n=1$, the general case follows in a similar way. There are two times $s \neq t$ and
a point $x$ such that
\begin{equ}
\phi'(x,s) - \phi'(x,t) = \|\phi'\|_{\alpha;t} |t-s|^\alpha \eqdef \eps\;.
\end{equ}
Therefore, for $y$ such that $|y-x| \le \eps/(4\|\phi''\|_\infty)$, we have
\begin{equ}
\phi'(y,s) - \phi'(y,t) \ge {\eps\over 2}\;.
\end{equ}
Integrating this inequality from $x$ to $y$, we obtain
\begin{equ}
\phi(y,s) - \phi(x,s) - \phi(y,t) + \phi(x,t) \ge  {\eps^2 \over 8\|\phi''\|_\infty}\;,
\end{equ}
so that 
\begin{equ}
{\|\phi'\|_{\alpha;t}^2 |t-s|^{2\alpha} \over 8\|\phi''\|_\infty} = {\eps^2 \over 8\|\phi''\|_\infty}\le 2\|\phi\|_{2\alpha;t} |t-s|^{2\alpha}\;,
\end{equ}
which is precisely the claim \eref{e:interp}.

It follows from \eref{e:defphiY} and elementary properties of the H\"older norms that
\begin{equs}
\|\phi(Y)\|_\alpha &\le C (\|\phi\|_\infty + \|D_y\phi\|_\infty) \|Y\|_{\CC^\alpha} + \|\phi\|_{\alpha;t}\|Y\|_\infty\;,\\
\|\phi(Y)'\|_\alpha &\le  C (\|D\phi\|_\infty + \|D_y^2\phi\|_\infty) \|Y\|_{\CC^\alpha} \|Y'\|_{\CC^\alpha}  + \|D_y\phi\|_{\alpha;t} \|Y'\|_\infty\\
 &\le  C (\|\phi\|_\infty + \|D_y^2\phi\|_\infty) \|Y\|_{X,\alpha}^2  + C \sqrt{\|D^2\phi\|_\infty \|\phi\|_{2\alpha;t}} \|Y'\|_\infty\;.
\end{equs}
Concerning the remainder, we have the bound
\begin{equ}
\bigl|\phi(Y_t, t) - \phi(Y_s, s) - D_y \phi(Y_s, s)\delta Y_{s,t} \bigr| \le \|\phi\|_{2\alpha;t} |t-s|^{2\alpha} + {1\over 2}\|D_y^2 \phi\|_\infty |\delta Y_{s,t}|^2\;.
\end{equ}
Since on the other hand $|\delta Y_{s,t} - Y'_s \delta X_{s,t}| = R_{s,t}$ by definition, we then have
for the remainder term  $R^\phi$ of the controlled rough path $(\phi(Y), \phi(Y)')$ the bound
\begin{equ}
|R^\phi_{s,t}| \le \|D_y \phi\|_\infty |R_{s,t}| + \|\phi\|_{2\alpha;t} |t-s|^{2\alpha} + {1\over 2}\|D_y^2 \phi\|_\infty \|Y\|_\alpha^2 |t-s|^{2\alpha}\;.
\end{equ}
The claim now follows from the assumptions on $\phi$.
\end{proof}

In particular, this shows that if $f \in \CC^{2\alpha}$ and $(Y,Y')\in \CC^\alpha_X$, then
both $f\cdot Y$ and $f + Y$ are well-defined elements of $\CC^\alpha_X$. In that case, 
one can slightly improve over the general bound given in Lemma~\ref{lem:comp}, namely one has
\begin{equs}
\|(fY, fY')\|_{X,\alpha} &\le 2 \|f\|_{\CC^{2\alpha}}\|(Y,Y')\|_{X,\alpha}\;,\label{e:prodfY}\\
\|(f+Y, Y')\|_{X,\alpha} &\le 2\|f\|_{\CC^{2\alpha}} + \|(Y,Y')\|_{X,\alpha}\;.
\end{equs}
It also shows immediately that
$\CC^\alpha_X$ is an algebra for every reference rough path $X$.

\subsection{Integration of controlled rough paths.}

The aim of this section is to give a meaning to the expression $\int Y_t \otimes dX_t$, when 
$X \in \CD^\alpha$ and $(Y,Y') \in \CC_X^\alpha$. A natural approach would be to try to define
it as a limit of Riemann sums, that is 
\begin{equ}[e:defint]
\int_0^1 Y_t \otimes dX_t = \lim_{|\CP| \to 0} \sum_{[s,t] \in \CP} Y_s \otimes \delta X_{s,t}\;,
\end{equ}
where $\CP$ denotes a partition of $[0,1]$ (interpreted as a finite collection of intervals) and $|\CP|$ denotes
the length of the largest element of $\CP$. Unfortunately, this does not converge in general. The next best approximation
to the integral is given by making use of the approximation $Y_t \approx Y_s + Y'_s \,\delta X_{s,t}$ suggested  by
\eref{e:defYR} and combining this with \eref{e:intX}. This suggests that instead of \eref{e:defint}, one should rather define the integral as
\begin{equ}[e:defintfinal]
\int_0^1 Y_t \otimes dX_t = \lim_{|\CP| \to 0} \sum_{[s,t] \in \CP} \bigl(Y_s\otimes  \delta X_{s,t} + Y'_s \,\XX_{s,t}\bigr)\;.
\end{equ}

With these notations at hand, we quote the following result, which is a slight reformulation of \cite[Prop~1]{Max}:

\begin{theorem}\label{theo:integral}
Let $(X,\XX) \in \DD^\alpha$ for some $\alpha > {1\over 3}$ and fix $T>0$. Then, the map 
\begin{equ}[e:defInt]
(Y, Y') \mapsto \Bigl(\int_0^\cdot Y_t\otimes dX_t, Y\otimes I\Bigr)\;,
\end{equ}
with the integral defined as in \eref{e:defintfinal} is continuous 
from $\CC^\alpha_X$ to $\CC^\alpha_X$ and one has the bound
\begin{equ}[e:boundInt]
\Bigl\|\int_0^\cdot \delta Y_{0,t}\otimes dX_t\Bigr\|_\alpha \le
C \bigl(\|X\|_\alpha \|R\|_{2\alpha} + \|\XX\|_{2\alpha}\|Y'\|_{\CC^\alpha}\bigr) \;,
\end{equ}
for some constant $C>0$ depending only on the final time.
If furthermore
$(\bar X, \bar \XX) \in \DD^\alpha$  and $(\bar Y, \bar Y') \in \CC^\alpha_{\bar X}$, then there exists a constant $C$ such that
the bound
\begin{equs}
\Bigl\|\int_0^\cdot \delta Y_{0,t}\otimes dX_t &-\int_0^\cdot \delta \bar Y_{0,t}\otimes d\bar X_t\Bigr\|_\alpha \le
C \|X - \bar X\|_\alpha \bigl(\|R\|_{2\alpha}+ \|\bar R\|_{2\alpha}\bigr) \\
&\qquad + C \|\XX - \bar \XX\|_{2\alpha} \bigl(\|Y'\|_{\CC^\alpha}+ \|\bar Y'\|_{\CC^\alpha}\bigr) \\
&\qquad + C \|R - \bar R\|_{2\alpha} \bigl(\|X\|_\alpha + \|\bar X\|_\alpha\bigr) \\
&\qquad + C \|Y' - \bar Y'\|_{\CC^\alpha}\bigl(\|\XX\|_{2\alpha} + \|\bar \XX\|_{2\alpha}\bigr) \label{e:boundIntDiff}
\end{equs}
holds.
\end{theorem}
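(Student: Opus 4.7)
My plan is to follow Gubinelli's sewing approach. Define the two-parameter \emph{germ}
\begin{equ}
\Xi_{s,t} := Y_s \otimes \delta X_{s,t} + Y'_s\, \XX_{s,t}\;,
\end{equ}
so that the Riemann sum in \eref{e:defintfinal} is exactly $\sum_{[s,t]\in\CP} \Xi_{s,t}$. The crux is to show that $\Xi$ is \emph{almost additive}, in the sense that $(\delta\Xi)_{s,u,t} := \Xi_{s,t} - \Xi_{s,u} - \Xi_{u,t}$ is controlled by a constant multiple of $|t-s|^{3\alpha}$, with $3\alpha>1$. Substituting $\XX_{s,t} = \XX_{s,u} + \XX_{u,t} + \delta X_{s,u}\otimes \delta X_{u,t}$ via the Chen-type relation \eref{e:constr}, and recognising that $-R_{s,u} = Y_s - Y_u + Y'_s\,\delta X_{s,u}$ by \eref{e:defYR}, a direct computation gives
\begin{equ}
(\delta\Xi)_{s,u,t} = -R_{s,u}\otimes \delta X_{u,t} + (Y'_s - Y'_u)\,\XX_{u,t}\;,
\end{equ}
which is bounded by $C\bigl(\|R\|_{2\alpha}\|X\|_\alpha + \|Y'\|_\alpha \|\XX\|_{2\alpha}\bigr)|t-s|^{3\alpha}$.

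An abstract dyadic sewing argument (the $\Lambda$-map behind Proposition~1 of \cite{Max}) promotes this almost-additivity into the convergence of $\sum_{[s,t]\in\CP} \Xi_{s,t}$ as $|\CP|\to 0$ to a limit $Z_{s,t}$, together with the sharp bound $|Z_{s,t} - \Xi_{s,t}| \le C\bigl(\|R\|_{2\alpha}\|X\|_\alpha + \|Y'\|_\alpha \|\XX\|_{2\alpha}\bigr)|t-s|^{3\alpha}$. Setting $Z_t := Z_{0,t}$, one then has
\begin{equ}
Z_t - Z_s - Y_s\otimes \delta X_{s,t} = Y'_s\,\XX_{s,t} + O(|t-s|^{3\alpha})\;,
\end{equ}
which is $2\alpha$-H\"older in $(s,t)$; hence $(Z, Y\otimes I) \in \CC^\alpha_X$, establishing continuity of the map \eref{e:defInt}. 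The bound \eref{e:boundInt} is then obtained by rewriting $\int_0^\cdot \delta Y_{0,t}\otimes dX_t = Z_\cdot - Y_0\otimes \delta X_{0,\cdot}$ and combining the sewing error with the explicit control of $Y'_s\,\XX_{s,t}$, after absorbing the remaining linear piece $\delta Y_{0,s}\otimes \delta X_{s,t}$ via $\|Y\|_\alpha \le C\|R\|_{2\alpha} + \|Y'\|_\infty \|X\|_\alpha$.

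For the stability estimate \eref{e:boundIntDiff}, the same argument is applied to the germ $\Xi - \bar\Xi$. Its defect splits naturally into four pieces,
\begin{equs}
(\delta(\Xi-\bar\Xi))_{s,u,t} &= -(R-\bar R)_{s,u}\otimes \delta X_{u,t} - \bar R_{s,u}\otimes \delta(X-\bar X)_{u,t} \\
&\quad + \bigl((Y'-\bar Y')_s - (Y'-\bar Y')_u\bigr)\XX_{u,t} + \delta \bar Y'_{s,u}\,(\XX - \bar\XX)_{u,t}\;,
\end{equs}
each of which is bounded by $|t-s|^{3\alpha}$ times one of the four factors appearing on the right-hand side of \eref{e:boundIntDiff}. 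A second application of the sewing lemma converts this defect bound into the claimed $\alpha$-H\"older estimate on $Z - \bar Z$.

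The main obstacle is the sewing step itself: the quantitative iterative refinement over successive dyadic partitions that converts the bound $|(\delta\Xi)_{s,u,t}| \le C|t-s|^{3\alpha}$ into the existence of the integral \emph{and} the explicit $|t-s|^{3\alpha}$ control of $Z_{s,t} - \Xi_{s,t}$. It is here that $\alpha > 1/3$ enters essentially, guaranteeing that the geometric series of dyadic defects, with ratio $2^{1-3\alpha}$, is summable rather than divergent.
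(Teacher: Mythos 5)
The paper does not actually prove Theorem~\ref{theo:integral}: it is quoted as a slight reformulation of \cite[Prop.~1]{Max}, so there is no internal argument to compare against. Your sewing proof is precisely the standard argument behind that citation, and its core is correct: the germ $\Xi_{s,t}=Y_s\otimes\delta X_{s,t}+Y'_s\,\XX_{s,t}$, the identity $(\delta\Xi)_{s,u,t}=-R_{s,u}\otimes\delta X_{u,t}+(Y'_s-Y'_u)\,\XX_{u,t}$ obtained from \eref{e:constr} and \eref{e:defYR}, the role of $3\alpha>1$ in summing the dyadic defects, and the four-term splitting of $\delta(\Xi-\bar\Xi)$ for the stability estimate are all exactly as they should be.

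One quantitative loose end remains in the passage from the sewing estimate to the literal bounds. The sewing lemma only controls $Z_{s,t}-\Xi_{s,t}$; to get \eref{e:boundInt} you must also bound the germ itself, and your absorption of $\delta Y_{0,s}\otimes\delta X_{s,t}$ via $\|Y\|_\alpha\le C\|R\|_{2\alpha}+\|Y'\|_\infty\|X\|_\alpha$ produces the term $\|Y'\|_\infty\|X\|_\alpha^2$, which is not literally of the form on the right-hand side of \eref{e:boundInt}. This can be repaired: for fixed $s<t$ and each coordinate $i$, choose by continuity a point $u\in(s,t)$ with $X^i_u=\tfrac12(X^i_s+X^i_t)$; then \eref{e:constr} gives $\tfrac14|\delta X^i_{s,t}|^2=|\delta X^i_{s,u}\,\delta X^i_{u,t}|\le 3\|\XX\|_{2\alpha}|t-s|^{2\alpha}$, whence $\|X\|_\alpha^2\le C\|\XX\|_{2\alpha}$ for every rough path, and the offending term is dominated by $\|Y'\|_{\CC^\alpha}\|\XX\|_{2\alpha}$. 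The same issue reappears in \eref{e:boundIntDiff}, where the non-defect pieces generate cross terms such as $\|\bar Y'\|_\infty\|X-\bar X\|_\alpha\|X\|_\alpha$; there one either uses the same observation or, as is standard for such local-Lipschitz statements, lets $C$ depend on bounds for the norms of the data --- which is how the estimate is used in the rest of the paper anyway, since everything enters through powers of $1+\/\Psi\/$.
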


\begin{remark}
The bound \eref{e:boundInt} does behave in a very natural way under dilatations.
Indeed, the integral is invariant under the transformation 
\begin{equ}[e:dilat]
(Y,X,\XX) \mapsto (\lambda^{-1}Y, \lambda X, \lambda^2 \XX)\;.
\end{equ}
The same is true for right hand side of \eref{e:boundInt}, since under this dilatation, we also have $(Y', R) \mapsto (\lambda^{-2} Y', \lambda^{-1}R)$. 
\end{remark}

\subsection{Integration against a scaled function}

While the bound \eref{e:boundInt} is well-behaved under \eref{e:dilat}, it is very badly behaved if the integrand is multiplied
by a smooth function that is rescaled in its argument. However, when acting onto the nonlinearity of our equation
with the heat semigroup, this is precisely the type of expression that we encounter, and sharp bounds are essential in order to
obtain the well-posedness of our problem. In this subsection, we give such a bound.
Let $(X,\XX)$ be a rough path belonging to $\DD^\alpha$,
let $(Y,Y') \in \CC_X^\alpha$ be a rough path controlled by $X$, and let $f\colon \R \to \R$ be a $\CC^1$ function such that
\begin{equ}[e:normf]
\|f\|_{1,1} := \sum_{n \in \Z} \sup_{0 \le t \le 1} \bigl(|f(n+t)| + |f'(n+t)|\bigr) < \infty\;.
\end{equ}
We then have the following bound, which is crucial in what follows.

\begin{proposition}\label{prop:convolution}
Let $\alpha > {1\over 3}$.
With the same notations as above, there exists a universal constant $C_\alpha$ such that the bound
\begin{equ}[e:boundint]
\Bigl|\int_0^1 f(\lambda t) Y(t)\,dX(t)\Bigr| \le C_\alpha \lambda^{-\alpha} \|Y\|_{X,\alpha} \|f\|_{1,1} \bigl(\|X\|_\alpha + \|\XX\|_{2\alpha}\bigr) \;,
\end{equ}
holds for all $\lambda \ge 1$.
\end{proposition}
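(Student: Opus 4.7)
The plan is to exploit the oscillation of $f(\lambda \cdot)$ by partitioning $[0,1]$ into $N = \lceil \lambda \rceil$ intervals $I_k = [t_k, t_{k+1}]$ with $t_k = k/\lambda$, each of length at most $h := 1/\lambda$. Setting $M_k := \sup_{[k, k+1]}(|f| + |f'|)$ one has $|f(\lambda \cdot)| \le M_k$ on $I_k$, and the crucial $\ell^1$-type property $\sum_{k \ge 0} M_k \le \|f\|_{1,1}$ holds uniformly in $\lambda$; this is the structural reason a bound with only one copy of $\|f\|_{1,1}$ can survive summation over $\sim\lambda$ sub-intervals.

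On each $I_k$, I treat $(f(\lambda \cdot)Y, f(\lambda \cdot)Y')$ as a controlled rough path through the product rule of Section~\ref{sec:compos}, and apply the local sewing-type bound underlying \eref{e:boundInt} (as in the proof of \cite[Prop.~1]{Max}):
\begin{equation*}
\Bigl|\int_{I_k} f(\lambda t) Y_t\, dX_t - f(k) Y_{t_k} \delta X_{I_k} - f(k) Y'_{t_k} \XX_{I_k}\Bigr| \le C h^{3\alpha}\bigl(\|X\|_\alpha \|R^{fY}\|_{2\alpha; I_k} + \|\XX\|_{2\alpha} \|(fY)'\|_{\alpha; I_k}\bigr).
\end{equation*}
The remainder expands as $R^{fY}_{s,t} = (f(\lambda t) - f(\lambda s))Y_t + f(\lambda s) R_{s,t}$, and on $I_k$ the constraint $\lambda|t-s| \le 1$ lifts the trivial Lipschitz bound $|f(\lambda t) - f(\lambda s)| \le M_k\lambda|t-s|$ to the $2\alpha$-H\"older bound $M_k \lambda^{2\alpha}|t-s|^{2\alpha}$. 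Hence $\|R^{fY}\|_{2\alpha; I_k} \le M_k \lambda^{2\alpha}(\|Y\|_\infty + \|R\|_{2\alpha})$ and, by a similar computation, $\|(fY)'\|_{\alpha; I_k} \le M_k \lambda^\alpha \|Y'\|_{\CC^\alpha}$. The sewing prefactor $h^{3\alpha} = \lambda^{-3\alpha}$ then exactly absorbs these blow-ups, yielding the local estimate $|E_k| \le C M_k \lambda^{-\alpha} \|(Y,Y')\|_{X,\alpha}(\|X\|_\alpha + \|\XX\|_{2\alpha})$.

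Summing over $k$ and using $\sum_k M_k \le \|f\|_{1,1}$ controls the total error by $C\lambda^{-\alpha}\|f\|_{1,1}\|(Y,Y')\|_{X,\alpha}(\|X\|_\alpha + \|\XX\|_{2\alpha})$. The two Riemann-sum main terms are bounded directly using $|\delta X_{I_k}| \le \|X\|_\alpha \lambda^{-\alpha}$ and $|\XX_{I_k}| \le \|\XX\|_{2\alpha} \lambda^{-2\alpha}$, giving $|\sum_k f(k) Y_{t_k}\delta X_{I_k}| \le \|Y\|_\infty \|X\|_\alpha \lambda^{-\alpha} \|f\|_{1,1}$ and $|\sum_k f(k) Y'_{t_k}\XX_{I_k}| \le \|Y'\|_\infty \|\XX\|_{2\alpha} \lambda^{-2\alpha}\|f\|_{1,1}$, both $\le \lambda^{-\alpha}$ times the right combination (since $\lambda \ge 1$). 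Adding these contributions yields \eref{e:boundint}. The main obstacle is the temptation to apply \eref{e:boundInt} globally on $[0,1]$: the resulting bound would involve the global $\|R^{fY}\|_{2\alpha}$, which by the above computation scales as $\lambda^{2\alpha}$ and therefore \emph{blows up} with $\lambda$. Localizing onto scale-$1/\lambda$ sub-intervals is essential because the sewing gain $h^{3\alpha} = \lambda^{-3\alpha}$ precisely neutralizes this deterioration, reducing it to $\lambda^{-\alpha}$, while the $\ell^1$ estimate $\sum_k M_k \le \|f\|_{1,1}$ stops the accumulation across the $\sim\lambda$ sub-intervals from causing any further loss.
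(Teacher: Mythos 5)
Your proof is correct and follows essentially the same strategy as the paper: both decompose $[0,1]$ into the $\sim\lambda$ sub-intervals on which $\lambda t$ traverses a unit interval of $f$, extract the $\lambda^{-\alpha}$ gain from the H\"older norms of $(X,\XX)$ at scale $1/\lambda$, and use the $\ell^1$ structure of $\|f\|_{1,1}$ to sum the per-interval contributions without loss. The only difference is bookkeeping: the paper rescales each sub-interval to $[0,1]$ and invokes Theorem~\ref{theo:integral} together with \eref{e:prodfY} for the lifted path $f_k$, whereas you stay on the small intervals and use the local sewing estimate plus a direct bound on the compensated Riemann terms — the two are equivalent.
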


\begin{remark}
Recall that if $Y \in \CC^\alpha_X$, then $t \mapsto f(\lambda t) Y_t$ also belongs to the same space.
Therefore, the integral appearing in \eref{e:boundint} is well-defined in the sense of \cite{Max}.
\end{remark}

\begin{proof}
We assume without loss of generality that $\lambda$ is an integer and we write
\begin{equ}
\int_0^1 f(\lambda t) Y(t)\,dX(t) = \sum_{k=0}^{\lambda -1} \int_0^1 f(t+k) Y_{\lambda,k}(t)\,dX_{\lambda,k}(t)\;,
\end{equ}
where we have set
\begin{equ}
X_{\lambda,k}(t) = X((t+k)/\lambda)\;,\qquad \XX_{\lambda,k}(s,t) = \XX((s+k)/\lambda, (t+k)/\lambda)\;.
\end{equ}
Similarly, the path $Y_{\lambda,k}$ is considered to be controlled by $X_{\lambda,k}$ with $Y_{\lambda,k}'(t) = Y'((t+k)/\lambda)$.
With these notations, it is straightforward to check from the definitions that one has the bounds 
$\|X_{\lambda,k}\|_\alpha \le \lambda^{-\alpha}\|X\|_\alpha$ and $\|\XX_{\lambda,k}\|_{2\alpha} \le \lambda^{-2\alpha}\|\XX\|_{2\alpha}$,
and that furthermore
\begin{equ}
\|Y_{\lambda,k}\|_{X_{\lambda,k},\alpha} \le \|Y\|_{X,\alpha}\;.
\end{equ}
Finally, setting $f_k(t) = f(t+k)$ with $t \in [0,1]$, we can view $f_k$ as a path $F_k \in \CC^\alpha_X$ just as in  Section~\ref{sec:lift}.
Setting 
\begin{equ}
\alpha_k = \|F_k\|_{\CC^{2\alpha}}\;,
\end{equ}
it follows from \eref{e:normf} that there exists a constant $C$ such that $\sum_{k \ge 0} \alpha_k \le C \|f\|_{1,1} < \infty$. 
Combining these bounds, it follows
from Theorem~\ref{theo:integral} and \eref{e:prodfY} that 
\begin{equs}
\Bigl|\int_0^1 f(t+k) Y_{\lambda,k}(t)\,dX_{\lambda,k}(t)\Bigr| &\le C \alpha_k \|Y_{\lambda,k}\|_{X_{\lambda,k},\alpha} \bigl(\|X_{\lambda,k}\|_\alpha + \|\XX_{\lambda,k}\|_{2\alpha}\bigr) \\
&\le C \lambda^{-\alpha} \alpha_k \|Y\|_{X,\alpha}\bigl(\|X\|_\alpha + \|\XX\|_{2\alpha}\bigr) \;,
\end{equs}
so that the requested bound follows at once by summing over $k$.
\end{proof}

\section{Definition of solutions and well-posedness}
\label{sec:defSol}

In this section, we show that it is possible to give a meaning to \eref{e:Burgers} by using rough path theory.
For this, we first denote by $\psi$ the \textit{stationary} solution to the linearised SPDE
\begin{equ}[e:eqlin]
d\psi = (\d_x^2-1) \psi + \sigma dW\;. 
\end{equ}
We then have the following result, which is a straightforward consequence of the results in \cite{CoutQian,GaussI}:

\begin{lemma}\label{lem:defPsi}
For every $\alpha < {1\over 2}$, the stochastic process 
$\psi$ given by \eref{e:eqlin} can be lifted canonically to a process $\Psi \colon \R_+ \to \DD^\alpha$
which has almost surely H\"older continuous sample paths.
\end{lemma}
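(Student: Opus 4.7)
The plan is to construct the spatial rough path $\Psi_t = (X_t, \PPsi_t) \in \DD^\alpha$ pointwise in $t$ via the canonical Gaussian rough path lift, and then to upgrade this to a time-continuous version using Kolmogorov's criterion. For each fixed $t$, the map $x \mapsto \psi(x,t)$ is a centred, $x$-stationary Gaussian field on the circle, whose covariance operator is proportional to $(1-\d_x^2)^{-1}$; its kernel $C(x,y)$ is smooth off the diagonal and has the same $|x-y|$-cusp on the diagonal as Brownian motion, so $C$ has finite 2D $\rho$-variation in the Friz--Victoir sense for every $\rho > 1$. The construction of \cite{CoutQian,GaussI} therefore applies and produces a canonical spatial lift $(X_t,\PPsi_t) \in \DD^\alpha$ for every $\alpha < {1\over 2}$, realised as the $L^p$ limit of the lifts of any reasonable smooth approximation (e.g.\ Fourier truncation). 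Gaussian hypercontractivity on the first two Wiener chaoses of $W$ then yields all moments of $\|X_t\|_\alpha$ and $\|\PPsi_t\|_{2\alpha}$.

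Next, I would establish temporal regularity by applying Kolmogorov's criterion to the increments $X_t - X_s$ and $\PPsi_t - \PPsi_s$ separately. Using the mild representation $\psi(t) = e^{(\d_x^2 - 1)(t-s)}\psi(s) + \sigma\int_s^t e^{(\d_x^2 - 1)(t-r)}\,dW(r)$ and the independence of $\psi(s)$ from the noise on $[s,t]$, a direct Fourier-side computation yields a bound of the form
\begin{equ}
\E \bigl|\psi(x,t)- \psi(y,t) - \psi(x,s) + \psi(y,s)\bigr|^2 \lesssim |t-s|^\gamma |x-y|^{2\alpha+\eta}
\end{equ}
for suitable small $\gamma,\eta > 0$. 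Gaussian hypercontractivity boosts this to any $L^p$ moment, and a multi-parameter Kolmogorov argument produces a jointly continuous modification of $(t,x) \mapsto \psi(x,t)$ with sample paths H\"older in $t$ taking values in $\CC^\alpha$. Since $\PPsi_t(x,y)$ lies in the second Wiener chaos of $W$, the analogous time-increment estimate for $\PPsi_t - \PPsi_s$ reduces, again by hypercontractivity, to a second-moment computation that can be carried out in closed form on the Fourier side by splitting $\psi$ into the part generated before time $s$ and the part generated on $[s,t]$.

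The main obstacle will be to extract enough spatial regularity from these time-increment estimates to recover the full $\|\cdot\|_{2\alpha}$ seminorm on $\PPsi_t - \PPsi_s$, rather than only a weaker $\CC^{2\alpha'}$ norm with $\alpha' < \alpha$. A naive comparison of the two lifts via \eref{e:boundIntDiff} only yields this weaker bound. The extra regularity margin must come from the parabolic smoothing factors $e^{-k^2(t-s)}$ in the Fourier expansion of $\psi(t) - \psi(s)$, which provide an effective time-space scaling allowing one to exchange a small amount of temporal decay for additional spatial regularity, in the spirit of the two-parameter Kolmogorov-type arguments developed in \cite{GaussI}.
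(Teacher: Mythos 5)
Your construction of the fixed-time lift and the overall plan (fixed-$t$ Gaussian lift plus Kolmogorov in $t$) match the paper, but the proposal stops exactly at the step that carries all the weight, and you say so yourself: you never actually obtain a bound of the form $\E\bigl\|\PPsi_t - \PPsi_s\bigr\|_{2\alpha}^{q} \lesssim |t-s|^{\theta q}$ in the genuine $2\alpha$-H\"older rough path seminorm. Declaring that the needed margin ``must come from the parabolic smoothing factors'' and will work ``in the spirit of'' two-parameter Kolmogorov arguments is a statement of the obstacle, not a proof; moreover, a pointwise mixed-increment bound on $\PPsi_t(x,y)-\PPsi_s(x,y)$ would still have to be fed into a Kolmogorov-type criterion adapted to two-parameter objects satisfying Chen's relation before it yields the $\|\cdot\|_{2\alpha}$ seminorm, and none of this is set up. As written, the second (and decisive) level of the argument is a gap.

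The paper closes this gap with much less machinery: it only proves the elementary pointwise bound $\E|\psi(x,t)-\psi(x,s)|^2 \le C|t-s|^{1/2}$ (from the Fourier representation, i.e.\ the factor $1-e^{-(1+k^2)|t-s|}$) and then invokes the continuity theorem for Gaussian rough paths, \cite[Theorem~37]{GaussI}, applied to the jointly Gaussian pair $\bigl(\psi(\cdot,s),\psi(\cdot,t)\bigr)$. That theorem converts sup-in-$x$ $\L^2$-closeness of the two processes, together with a uniform bound on the two-dimensional $\rho$-variation of their joint covariance, directly into moment bounds on $\|\Psi_s-\Psi_t\|_\alpha$ and $\|\PPsi_s-\PPsi_t\|_{2\alpha}$ with a small loss expressed through an exponent $\theta>0$; ordinary Kolmogorov in the time variable then finishes the proof. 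In particular, no mixed space-time increment estimate and no multi-parameter Kolmogorov argument for the first level are needed, and the regularity ``reserve'' does not come from exchanging temporal decay for spatial regularity via the heat factors, as you suggest: it comes from the fact that the covariance has finite $\rho$-variation for every $\rho>1$ (Brownian-like diagonal cusp), which is precisely the hypothesis of \cite[Theorem~37]{GaussI}; the parabolic smoothing only supplies the $|t-s|^{1/2}$ factor. Since you have essentially already verified the variation hypothesis for the fixed-time covariance, the cleanest repair of your argument is to verify it uniformly for the joint covariance of $(\psi_s,\psi_t)$ and quote that theorem rather than attempt the two-parameter estimate by hand.
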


\begin{proof}
The process $\psi$ is a centred Gaussian process with covariance function given, for fixed $t$, by
\begin{equ}
\E \psi(x,t) \psi(y,t) = K(x-y)\;,
\end{equ}
where $K$ is given by
\begin{equ}
K(x) = {\sigma^2\over 2\pi} \sum_{k \in \Z} { \cos k x \over 1+k^2} = {\sigma^2 \over 2} {\cosh (|x|-\pi) \over \sinh \pi}\;,
\end{equ}
for $x\in [-\pi,\pi]$. It is then extended periodically for the remaining values of $x$. 
In particular, $K$ is $\CC^\infty$ away from the origin, with a jump discontinuity in its first derivative at the origin.

By \cite[Theorem~35]{GaussI}, we conclude that for every fixed $t>0$, the Gaussian process $\psi(\cdot, t)$ can be lifted canonically
to a rough path $\Psi_t$. (See Remark~\ref{rem:canonicalLift} below on the meaning of `canonical' in this context.) 
We stress once again that the spatial variable $x$ plays the r\^ole of `time' here, while $t$ remains fixed!

Furthermore, there exists a constant $C>0$ such that, given any two times $s,t > 0$, we have
\begin{equ}
\E |\psi(x,t) - \psi(x,s)|^2 \le C|t-s|^{1/2}\;.
\end{equ}
This can be seen from the bound
\begin{equ}
\E |\psi(x,t) - \psi(x,s)|^2 = {\sigma^2\over 2\pi} \sum_{k \in \Z} {1- e^{-(1+k^2) |t-s|}\over 1+k^2} \le C \sum_{k \ge 1} |t-s| \wedge k^{-2}\;,
\end{equ}
and bounding this sum by an integral.
By \cite[Theorem~37]{GaussI}, this shows that there exists an exponent $\theta > 0$ and constants $C_q$ such that 
\begin{equ}
\E \bigl(\bigl\|\Psi_s - \Psi_t\bigr\|_{\alpha}^{2q} +  \bigl\|\PPsi_s - \PPsi_t\bigr\|_{2\alpha}^{q}\bigr) \le C_q |t-s|^{\theta q}\;,
\end{equ}
for every $q \ge 1$ and every $s,t \in [0,1]$, say. The claim then follows at once from Kolmogorov's 
continuity criterion, as for example in \cite[p.~26]{RevYor}.
\end{proof}

\begin{remark}\label{rem:canonicalLift}
Given a Gaussian process $X_t$, the area process $\XX_{s,t}$ that is canonically associated to it is 
given by
\begin{equ}
\XX_{s,t} = \lim_{\eps \to 0} \int_s^t \delta X^\eps_{s,r}\,\dot X^\eps_r\,dr\;,
\end{equ}
where $X^\eps$ is \textit{any} sequence of smooth Gaussian processes such that 
$\sup_{t \le T} \E |X^\eps_t - X_t|^2 \to 0$ as $\eps \to 0$, and satisfying a suitable uniform regularity assumption.
 It follows from the results in  \cite[Theorem~35]{GaussI}
that if the covariance of $X$ has finite two-dimensional $p$-variation for $p < 2$, then this limit exists
independently of the choice of approximating processes.
\end{remark}

We are now finally in a position to formulate what we mean exactly by a solution $u$ to \eref{e:Burgers}:

\begin{definition}\label{def:sol}
A continuous stochastic process $u$ is a solution to \eref{e:Burgers} if the process $v_t = u_t-\psi_t$ 
belongs to $\CC([0,T], \CC)\cap \L^1([0,T], \CC^1)$ and is such that the identity
\begin{equs}
\scal{v_t,\phi} &= \scal{u_0-\psi_0,\phi} + \int_0^t \scal{(\d_x^2 -1)\phi, v_s}\,ds + \int_0^t \scal{\phi, g(u_s) \d_x v_s}\,ds \label{e:defsol} \\
&\qquad + \int_0^t \int_0^{2\pi} \phi(x) g\bigl(v_s(x)+\Psi_s(x)\bigr) \,d\Psi_s(x)\,ds  + \int_0^t \scal{\phi, \hat f(u_s)}\,ds \;, 
\end{equs}  
holds almost surely for every smooth periodic test function $\phi \colon [0,2\pi] \to \R$.
Here, we have set $\hat f(u) = f(u) + u$.
\end{definition}

\begin{remark}
Since we assume that $v_s \in \CC^1$, the path $x \mapsto \phi(x) g\bigl(v_s(x) + \Psi_s(x)\bigr)$ is controlled by
$\Psi_s$, so that the inner integral on the second line is well-defined as a rough integral in the sense of \cite{Max}. Furthermore, it yields
a measurable function of $s$, being the pointwise limit of measurable functions. Its value is bounded by a constant depending on
$\|\Psi_s\|_\alpha$ and $\|\phi\|_{\CC^1}$, and depending linearly on $\|v_s\|_{\CC^1}$ (by \eref{e:boundInt} and Lemma~\ref{lem:comp}), so that the outer integral always makes sense 
as well.
\end{remark}

\begin{remark}
At first sight, one could think that this notion of solution is dependent on the arbitrary choice of 
the constant `$1$' in \eref{e:eqlin}, which in turn accounts for the presence of the function $\hat f$ in \eref{e:defsol}. 
This constant is present for the sole purpose of actually
having a stationary solution to \eref{e:eqlin}. It is however a straightforward exercise to check that the
notion of a weak solution (with the obvious modifications in \eref{e:defsol}) is independent of this choice.
\end{remark}

\subsection{Mild solutions}

It is clear that the notion of a `weak solution' given in Definition~\ref{def:sol} has a `mild solution' counterpart.
We denote by $S_t$ the heat semigroup generated by $\d_x^2$ endowed with periodic boundary conditions
and we define the heat kernel $p_t$ as being the periodic function such that
\begin{equ}
\bigl(S_t u\bigr)(x) = \int_0^{2\pi} p_t(x-y)u(y)\,dy\;,
\end{equ}
holds for every continuous function $u$.

With these notations in place, we say that $v$ is a `mild solution' to \eref{e:Burgers} if it satisfies the same
conditions as in Definition~\ref{def:sol}, but with \eref{e:defsol} replaced by the requirement that the identity
\begin{equs}
v_t(x) &= \bigl(S_t(u_0- \psi_0)\bigr)(x) + \int_0^t \bigl(S_{t-s} \bigl(g(u_s) \d_x v_s + \hat f(u_s)\bigr)\bigr)(x)\,ds \\
&\qquad + \int_0^t \int_0^{2\pi} p_{t-s}(x-y) g(u(y,s)) \,d\Psi_s(y)\,ds \;, \label{e:defvmild}
\end{equs}  
holds almost surely for every $x\in [0,2\pi]$ and $t \in (0,T]$.

Before we show that mild solutions exist, we show that (as expected) the concepts of mild and weak solutions 
do agree.

\begin{proposition}
Every mild solution is a weak solution and vice-versa.
\end{proposition}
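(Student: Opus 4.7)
The strategy is to start from the mild formulation \eref{e:defvmild}, rewrite it as an identity for the pairing $\scal{\phi, v_t}$ against a smooth periodic test function $\phi$, and show by algebraic manipulation that this identity rearranges into the weak formulation \eref{e:defsol}. Testing \eref{e:defvmild} against $\phi$ and applying the self-adjointness of $S_{t-s}$ to both the classical Bochner integral and the rough integral (after a pathwise Fubini exchange, discussed below) yields
\begin{equ}
\scal{\phi, v_t} = \scal{v_0, S_t\phi} + \int_0^t \scal{S_{t-s}\phi,\,g(u_s)\d_x v_s + \hat f(u_s)}\,ds + \int_0^t\!\!\int_0^{2\pi} (S_{t-s}\phi)(y)\,g(u_s(y))\,d\Psi_s(y)\,ds\,,
\end{equ}
where I read $S_r$ as the semigroup generated by $\d_x^2 - 1$ (the unique convention making the two formulations consistent, the linear shift having been absorbed into $\hat f$).

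For the direction mild $\Rightarrow$ weak, I would substitute the identity $S_{t-s}\phi = \phi + \int_s^t (\d_x^2 - 1)S_{r-s}\phi\,dr$ into the display above, swap the resulting double integrals over $\{(r,s):0\le s\le r\le t\}$ by Fubini, and recognise that $\int_0^r S_{r-s}(g(u_s)\d_x v_s + \hat f(u_s))\,ds = v_r - S_r v_0 - (\text{rough contribution at time }r)$ by the mild identity at the intermediate time $r$. The telescoping of semigroup terms yields $\int_0^t \scal{(\d_x^2-1)\phi, v_r}\,dr$, the classical term $\int_0^t \scal{\phi, g(u_s)\d_x v_s + \hat f(u_s)}\,ds$ appears directly, and the rough contributions collapse via $\int_s^t \d_r S_{r-s}\phi\,dr = S_{t-s}\phi - \phi$ to leave precisely $\int_0^t \int \phi(x)g(u_s(x))\,d\Psi_s(x)\,ds$. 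The converse direction (weak $\Rightarrow$ mild) is obtained by running the same chain of identities backwards: one first extends Definition~\ref{def:sol} to time-dependent test functions $\phi_s \in \CC^1([0,T],\CC^\infty_{\mathrm{per}})$ by piecewise-constant approximation in $s$ (convergence being justified by continuity of $s \mapsto v_s \in \CC$, $\L^1$-integrability of $s \mapsto \|v_s\|_{\CC^1}$, and continuity of $s \mapsto \Psi_s \in \DD^\alpha$ together with Theorem~\ref{theo:integral}), then specialises to $\phi_s = S_{t-s}\eta$, for which $\d_s\phi_s + (\d_x^2-1)\phi_s \equiv 0$ and the remaining terms match \eref{e:defvmild} paired against $\eta$.

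The only genuinely non-classical step in the argument is the interchange of the Lebesgue $dr$-integration with the rough $d\Psi_s$-integration, needed both in the Fubini swap above and in the rewriting of the rough integral in the pairing with $\phi$. This is the main technical obstacle, and I would justify it by writing the rough integral as a limit of compensated Riemann sums in the sense of \eref{e:defintfinal}, exchanging (trivially, at the level of each Riemann sum) with the smooth $dr$-integral, and passing to the limit using Theorem~\ref{theo:integral} --- and, where appropriate for the rescaled-kernel integrand, Proposition~\ref{prop:convolution} --- to control the partition-remainders uniformly in the mesh. All remaining manipulations reduce to standard semigroup calculations combined with the assumed regularity $v \in \CC([0,T],\CC)\cap\L^1([0,T],\CC^1)$.
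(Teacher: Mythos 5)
Your argument is correct, and it is in essence the explicit version of what the paper disposes of in two lines: the paper's proof simply observes that, for each $s$, the rough integral lets one interpret $F_s(v)=g(v_s+\Psi_s)\,\d_x\Psi_s$ as an element of $\CS'$ (of order one, bounded in terms of $\|\phi\|_{\CC^1}$, $\|\Psi_s\|_\alpha$ and linearly in $\|v_s\|_{\CC^1}$, and continuous in $v\in\CC^1$ by Lemma~\ref{lem:comp}), after which the equivalence of the weak and mild formulations is the classical statement for an evolution equation with a distribution-valued drift, cited as ``standard techniques''. The one structural difference is how the rough term is handled: by packaging it once and for all as an $\CS'$-valued forcing, the paper never has to commute the rough integral with Lebesgue integration --- the identities $\scal{\phi,S_{t-s}F_s}=\scal{S_{t-s}\phi,F_s}$ and $\int_s^t\scal{(\d_x^2-1)S_{r-s}\phi,F_s}\,dr=\scal{S_{t-s}\phi-\phi,F_s}$ follow from linearity and $\CC^1$-continuity of the fixed functional $F_s$ applied to Bochner integrals of smooth test functions --- whereas your route manipulates the rough integrals directly, so the Fubini-type interchange becomes an explicit step, which you correctly identify as the only non-classical point and justify via compensated Riemann sums and the estimates of Theorem~\ref{theo:integral} (Proposition~\ref{prop:convolution} is not really needed here, since the test functions $S_{t-s}\phi$ are smooth and the crude bound \eref{e:boundInt} suffices). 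Your reading of $S_t$ as generated by $\d_x^2-1$, absorbing the damping consistently with $\hat f$, is also the right way to reconcile the two formulations. So: same underlying duality/semigroup argument; the paper's distributional packaging buys brevity and avoids the interchange issue, your direct computation buys a self-contained verification, including the converse direction via time-dependent test functions $\phi_s=S_{t-s}\eta$, carried out correctly.
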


\begin{proof}
For fixed $t$, the rough integral provides a way of interpreting
\begin{equ}
F_s(v) = g\bigl(v_s + \Psi_s\bigr) \d_x \Psi_s
\end{equ}
as an element of the space $\CS'$ of Schwartz distributions. Furthermore, it follows from
Lemma~\ref{lem:comp} that the map
$v \mapsto g\bigl(v + \Psi_s\bigr) \d_x \Psi_s$ is continuous as a map from $\CC^1$ to $\CS'$.
The claim then follows by standard techniques.
\end{proof}

From now on, we will only use the concept of a mild solution.

\subsection{Existence and uniqueness}

Our main result in this section is the following well-posedness result:

\begin{theorem}\label{theo:mild}
Let $\beta \in ({1\over 3}, {1\over 2})$ and let $u_0 \in \CC^\beta$.
Then, for almost every realisation of the driving process $\Psi$, there exists $T>0$ such that
equation \eref{e:Burgers} has a unique mild solution taking values in $\CC([0,T],\CC^\beta)$. 
If furthermore $g$ is bounded and all derivatives of $f$ and $g$ are bounded, then this solution is global
(i.e.\ one can choose $T$ arbitrary, independently of $\Psi$).
\end{theorem}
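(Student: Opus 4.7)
The plan is to set up a Picard iteration for the mild formulation \eref{e:defvmild} in a time-weighted H\"older space. For a fixed realisation of the noise lift $\Psi$, define the nonlinear map $\CM$ by sending $v$ to the right-hand side of \eref{e:defvmild} with $u_s = v_s + \psi_s$, and look for a fixed point. A convenient state space is
\begin{equ}
X_T = \Bigl\{v \in \CC([0,T],\CC^\beta) : \|v\|_{X_T} := \sup_{t \in [0,T]} \|v_t\|_{\CC^\beta} + \sup_{t \in (0,T]} t^{(1-\beta)/2}\|v_t\|_{\CC^1} < \infty\Bigr\}\;,
\end{equ}
compatible with the parabolic smoothing bound $\|S_t w\|_{\CC^1} \le C t^{-(1-\beta)/2}\|w\|_{\CC^\beta}$ and automatically giving the $\L^1([0,T],\CC^1)$ integrability required by Definition~\ref{def:sol}. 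The goal is then to show that for $T$ small enough (depending on $u_0$ and on $\Psi$) the map $\CM$ sends a suitable ball of $X_T$ into itself and is a strict contraction there.

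The three summands of $\CM v$ are estimated separately. The free term $S_t(u_0 - \psi_0)$ is handled by standard heat-semigroup bounds. The deterministic integral $\int_0^t S_{t-s}\bigl(g(u_s)\d_x v_s + \hat f(u_s)\bigr)\,ds$ is bounded by controlling $\|g(u_s)\d_x v_s\|_\infty$ by a polynomial in $\|v\|_{X_T}$ times the weight $s^{-(1-\beta)/2}$, together with the elementary convolution estimate $\int_0^t (t-s)^{-(1-\beta)/2}\,s^{-(1-\beta)/2}\,ds \le C t^\beta$. The core is the spatial rough integral $I_{t-s}(x) = \int_0^{2\pi} p_{t-s}(x-y)\,g(u_s(y))\,d\Psi_s(y)$. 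For fixed $s$, the integrand is a controlled rough path: since $v_s \in \CC^1$, the sum $u_s = v_s + \psi_s$ is controlled by $\Psi_s$ through the canonical lift of Section~\ref{sec:canlift} for $\psi_s$ and the embedding $\iota$ of Section~\ref{sec:lift} for $v_s$, so Lemma~\ref{lem:comp} combined with \eref{e:prodfY} yield control of $y \mapsto p_{t-s}(x-y)g(u_s(y))$ in $\CC^\alpha_{\Psi_s}$ with norm bounded by $C(1+\|v_s\|_{\CC^1})^2$, the constant depending on the supremum norms of $g$, $Dg$, $D^2 g$ and on $\|\Psi_s\|_{\DD^\alpha}$. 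Writing $p_{t-s}(z) = \lambda\,\varphi(\lambda z)$ with $\lambda = (t-s)^{-1/2}$ for $\varphi$ the periodised Gaussian (which has $\|\varphi\|_{1,1} < \infty$), Proposition~\ref{prop:convolution} then gives $|I_{t-s}(x)| \le C (t-s)^{-(1-\alpha)/2} (1+\|v_s\|_{\CC^1})^2$ for any $\alpha \in ({1\over 3},\beta)$; taking H\"older-$\beta$ increments in $x$ and spatial derivatives of $p_{t-s}$ costs extra factors $(t-s)^{-\beta/2}$ and $(t-s)^{-1/2}$ respectively, and all resulting time exponents stay below $1$ provided $\alpha + 2\beta > 1$, which is compatible with $\alpha, \beta > 1/3$. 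Integration in $s$ then produces the $\CC^\beta$ and weighted $\CC^1$ bounds required to control this term in the $X_T$-norm.

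Contraction is proved via the same chain of inequalities, using the difference estimate \eref{e:boundIntDiff} of Theorem~\ref{theo:integral} on the rough integral and the local Lipschitz continuity of $v \mapsto g(v+\psi_s)$ in the controlled-path norm. Local existence and uniqueness then follow from Banach's fixed-point theorem. For the global statement, the extra assumptions that $g$ is bounded and that all derivatives of $f$ and $g$ are bounded make all constants in the above estimates independent of the range of the solution; iterating the mild formulation on successive short intervals and using a Gronwall-type argument on $t \mapsto \|v_t\|_{X_T}$ then rules out finite-time blow-up. The main obstacle throughout is the rough integral: the $(t-s)^{-1/2}$ singularity of the heat kernel must be precisely offset by the $(t-s)^{\alpha/2}$ gain provided by Proposition~\ref{prop:convolution} while simultaneously tracking spatial regularity, and it is precisely this bookkeeping that the controlled-rough-path machinery is designed to allow.
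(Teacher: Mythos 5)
Your overall architecture (Picard iteration on the mild form, with Proposition~\ref{prop:convolution} applied to the rescaled heat kernel to gain a factor $(t-s)^{\alpha/2-1}$, as in Lemma~\ref{lem:convHeat}) is the right one, but there is a genuine quantitative gap in the fixed-point step, and it is exactly at the point you call ``bookkeeping''. You bound the controlled-path norm of the integrand by $C(1+\|v_s\|_{\CC^1})^2$ and then feed in the weight of your space $X_T$, i.e.\ $\|v_s\|_{\CC^1} \le \|v\|_{X_T}\, s^{-(1-\beta)/2}$. This produces a singularity $s^{-(1-\beta)}$, and convolving with the kernel factor $(t-s)^{\alpha/2-1}$ gives $\|\d_x (\CM^{(2)} v)(t)\|_\infty \le C\, t^{\alpha/2+\beta-1}$, which is strictly more singular than the allowed weight $t^{-(1-\beta)/2}$ precisely because $\alpha+\beta<1$ for all admissible $\alpha,\beta<{1\over 2}$; likewise the $\CC^\beta$ component of $\CM^{(2)}v$ picks up a factor $t^{(\alpha+\beta-1)/2}$ and blows up as $t\to 0$. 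So with the estimates as stated the map does not send a ball of $X_T$ into itself, and your condition $\alpha+2\beta>1$ does not repair this: the obstruction is $\alpha+\beta<1$, which always holds here.

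The fix is to avoid the quadratic dependence on the singular norm. In Lemma~\ref{lem:comp} the genuinely quadratic contribution involves only the $\alpha$-H\"older norm of the argument, which in your setting is controlled by the \emph{unweighted} $\CC^\beta$ part of the norm (since $\alpha<\beta$); the $2\alpha$-H\"older quantity needed for the remainder enters \emph{linearly}. Exploiting this, together with the interpolation $\|v_s\|_{2\alpha} \le C\|v_s\|_{\CC^\beta}^{1-\theta}\|v_s\|_{\CC^1}^{\theta}$ with $\theta = (2\alpha-\beta)/(1-\beta)$, replaces $s^{-(1-\beta)}$ by $s^{-(2\alpha-\beta)/2}$, and then $\int_0^t (t-s)^{\alpha/2-1} s^{-(2\alpha-\beta)/2}\,ds \le C t^{(\beta-\alpha)/2}$ closes all the estimates. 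The paper achieves the same effect differently: it subtracts $U_t = S_t(u_0-\psi_0)$ and iterates in the unweighted ball of $\CC([0,T],\CC^1)$ with $v_0=0$, so that the only singular input is $\|U_s\|_{2\alpha}\le C s^{-(2\alpha-\beta)/2}$, entering linearly. The same issue affects your global-existence argument: with $g$, $Dg$, $D^2g$, $Df$ bounded the constants are not independent of the solution's size — they grow linearly in the radius $K$ — and it is this linearity, giving $\|\CM_{T,\Psi} v\|_{1,T}\le K/2$ for all $T\le T_\star$ with $T_\star$ independent of $K$, that yields an existence time independent of the initial condition and hence global solutions; a Gronwall argument run on your quadratic bound does not deliver this.
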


\begin{remark}
Once $\Psi\colon \R_+\to \DD^\alpha$ is fixed, our construction is completely deterministic. 
The ill-posedness of the equation \eref{e:Burgers} is then a consequence of the fact that the area process
$\PPsi_t$ is not uniquely determined by $\Psi_t$. Care needs to be taken since different numerical 
approximations to \eref{e:Burgers} may converge to solutions corresponding to different choices of the area process.
However, the canonical choice given by Lemma~\ref{lem:defPsi} is natural, as we will see in
Section~\ref{sec:IM}.
\end{remark}

Before we turn to the proof of this result, we show that:

\begin{lemma}\label{lem:convHeat}
Let  $\alpha \in ({1\over 3}, {1\over 2})$, let $\Psi \in \DD^\alpha([0,2\pi], \R^d)$ and let $(Y,Y') \in \CC^\alpha_\Psi$.
Then, there exists a constant $C$ independent of $\Psi$ and $Y$ such that 
\begin{equ}
\Bigl|\int_0^{2\pi} \d_x p_{t}(x-y) Y_y \,d\Psi(y)\Bigr| \le C t^{{\alpha\over 2}-1} \|Y\|_{\Psi,\alpha} \bigl(\|\Psi\|_\alpha +\|\PPsi\|_{2\alpha}\bigr)\;,
\end{equ} 
uniformly for $x \in [0,2\pi]$.
\end{lemma}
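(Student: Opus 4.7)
The approach is to reduce the problem to Proposition~\ref{prop:convolution} via the exact self-similar scaling of the heat kernel. The crucial observation is the identity
\begin{equ}
\d_x p_t(z) = \lambda^2 F(\lambda z)\;,\qquad \lambda := 1/\sqrt t\;,\qquad F(w) := -w(16\pi)^{-1/2} e^{-w^2/4}\;.
\end{equ}
Since $F$ is Schwartz, in particular $\|F\|_{1,1}$ (as defined in \eref{e:normf}) is finite. The periodic heat kernel satisfies $p_t^{\text{per}}(z) = \sum_{k \in \Z} p_t(z+2\pi k)$, so that its derivative is $\d_x p_t^{\text{per}}(z) = \lambda^2 \sum_k F(\lambda z + 2\pi\lambda k)$. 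For $\lambda \ge 1$ this sum is dominated by the single term whose argument has smallest absolute value, with the remaining terms being of order $e^{-c\lambda^2}$ uniformly on $[-\pi,\pi]$.

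The first step is to discard these exponentially small periodic corrections: their contribution to the integral is bounded by a constant independent of $t$ through Theorem~\ref{theo:integral} together with the fact that the correction is smooth with all $\CC^k$ norms of order $e^{-c\lambda^2}$. This reduces matters to bounding the ``principal'' integral
\begin{equ}
J := \lambda^2 \int_0^{2\pi} F\bigl(\lambda (x - y)\bigr)\, Y_y\, d\Psi(y)\;,
\end{equ}
where by $2\pi$-periodicity of $Y$ and $\Psi$ the fundamental domain of integration may be translated so that $x-y$ ranges over $[-\pi,\pi]$.

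The second step is to rescale the integration variable from $[0,2\pi]$ to $[0,1]$ in order to apply Proposition~\ref{prop:convolution} directly. Setting $y = 2\pi s$, $\tilde Y_s := Y_{2\pi s}$, $\tilde \Psi_s := \Psi_{2\pi s}$ -- the controlled rough path structure being preserved, with H\"older seminorms picking up at most bounded factors of $(2\pi)^\alpha$ and $(2\pi)^{2\alpha}$ -- and letting $f(u) := F(\lambda x - u)$, the integral reads
\begin{equ}
J = \lambda^2 \int_0^1 f(\Lambda s)\, \tilde Y_s\, d\tilde \Psi_s\;,\qquad \Lambda := 2\pi \lambda\;.
\end{equ}
Since $f$ is merely a reflection and translate of $F$, the definition \eref{e:normf} yields $\|f\|_{1,1} = \|F\|_{1,1}$ uniformly in $x$ and $\lambda$. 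Applying Proposition~\ref{prop:convolution} (valid once $\Lambda \ge 1$, i.e. $t \le 4\pi^2$) then gives
\begin{equ}
|J| \le C \lambda^2 \Lambda^{-\alpha}\, \|\tilde Y\|_{\tilde\Psi,\alpha}\,\bigl(\|\tilde\Psi\|_\alpha + \|\tilde\PPsi\|_{2\alpha}\bigr)\;.
\end{equ}
The scaling $\lambda^2 \Lambda^{-\alpha} = (2\pi)^{-\alpha} t^{\alpha/2 - 1}$, together with the comparability of the tilded norms to their originals, produces the claimed bound.

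The main technical point is the uniformity of $\|f\|_{1,1}$ in both $x$ and $\lambda$, which as noted follows from translation and reflection invariance of the definition \eref{e:normf}. The complementary range $\Lambda < 1$, that is $t > 4\pi^2$, is trivial: there $\d_x p_t^{\text{per}}$ is uniformly bounded in every $\CC^k$ norm by a constant, so Theorem~\ref{theo:integral} directly provides a bound even stronger than $t^{\alpha/2-1}$.
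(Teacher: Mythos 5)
Your argument is essentially the paper's own proof: both rest on the self-similar scaling identity $\d_x p_t(z) = t^{-1}f_t(z/\sqrt t)$ (you write it as $\lambda^2 F(\lambda z)$ plus exponentially small periodization corrections, which the paper absorbs directly into $f_t$) followed by an application of Proposition~\ref{prop:convolution} after rescaling $[0,2\pi]$ to $[0,1]$, with the translation/reflection of the kernel harmlessly affecting $\|\cdot\|_{1,1}$ only up to a factor $2$. The only slip is the final remark for $t>4\pi^2$: a uniform $\CC^k$ bound on $\d_x p_t$ does not dominate $t^{\alpha/2-1}\to 0$, but this is immaterial since $\d_x p_t$ in fact decays like $e^{-t}$ there (and the paper anyway only needs, and only proves, the bound for $t\le 1$).
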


\begin{proof}
From the identity
\begin{equ}
\d_x p_{t}(x) = -\sum_{n \in \Z} {x\over \sqrt {2\pi} t^{3 \over 2}} \exp\Bigl({- {(x-2\pi n)^2 \over 2t}}\Bigr)\;,
\end{equ}
it is a simple exercise to check that for $t \in (0,1]$ there exist functions $f_t$ such that $\sup_{t \in (0,1]} \|f_t\|_{1,1} \le \infty$
and such that, for $x \in [-\pi,\pi]$, one has the identity
\begin{equ}
\d_x p_{t}(x) = {1\over t} f_t \Bigl({x\over \sqrt t}\Bigr)\;.
\end{equ}
The result then follows at once from Proposition~\ref{prop:convolution}.
\end{proof}

With this bound in hand, we can now turn to the

\begin{proof}[of Theorem~\ref{theo:mild}]
We perform a classical Picard iteration scheme for the fixed point equation \eref{e:defvmild}.
Fix $\alpha \in ({1\over 3}, \beta)$, and let the process
$\Psi\colon [0,1] \to \DD^\alpha([0,2\pi], \R^d)$ be given as in Lemma~\ref{lem:defPsi}.

We then consider a Picard iteration in the space of continuous functions
from $[0,T]$ to $\CC^1([0,2\pi], \R^d)$ (with $T\le 1$ to be determined), endowed with the norm
\begin{equ}
\|v\|_{1,T} = \sup_{t \le T} \|v_t\|_{\CC^1}\;.
\end{equ}
Denote this space by $\CC^1_{T}$ for the sake of conciseness. We also fix an initial condition
$u_0 \in \CC^\beta$ and we use the shorthand notation $U_t = S_{t} (u_0- \psi_0)$.

It turns out to be advantageous to subtract the contribution of the initial condition so that 
we set $v_t = u_t - \Psi_t - U_t$. With this definition, we have $v_0 = 0$ and 
we solve for the obvious modification of \eref{e:defvmild}. Note also that even though we consider
$\Psi$ as a process with values in $\DD^\alpha$, one actually has $\psi_0 \in \CC^\beta$ almost surely, and we will make
use of the additional leeway that this provides.
Given  $\Psi$, $u_0$ and $T$, we then consider the map 
\begin{equ}
\CM_{T,\Psi} \colon \CC^1_T \to \CC^1_T \;,
\end{equ}
given by
\begin{equs}
\bigl(\CM_{T,\Psi} v\bigr) (x,t) &= \int_0^t \bigl(S_{t-s} \bigl(g(u_s) \bigl(\d_x v_s + \d_x U_s\bigr) + \hat f(u_s)\bigr)\bigr)(x)\,ds \label{e:defM}\\
&\quad + \int_0^t \int_0^{2\pi} p_{t-s}(x-y) g\bigl(v(y,s) + \Psi_s(y) + U_s(y)\bigr) \,d\Psi_s(y)\,ds\\
&\eqdef \bigl(\CM_{T,\Psi}^{(1)} v\bigr) (x,t) + \bigl(\CM_{T,\Psi}^{(2)} v\bigr) (x,t)\;,
\end{equs}
where we use the shorthand notation $u_s = v_s + \psi_s + U_s$.
We now fix a realisation of $\Psi$ and we set $K > 1$ such that 
\begin{equ}
\|u_0\|_{\CC^\beta} \le K\;,\quad \|\psi_0\|_{\CC^\beta} \le K\;. 
\end{equ}
We also consider $v, \bar v$ such that
\begin{equ}
\|v\|_{1,T} < K\;,\qquad \|\bar v\|_{1,T} < K\;,
\end{equ}
and we set $\/\Psi\/ = \sup_{t \le 1} \bigl(\|\Psi_t\|_{\CC^\alpha} + \|\PPsi_t\|_{2\alpha}\bigr)$.
 
We then have a constant $c$ such that $\|u_s\|_{\infty} \le cK$ for
$s \le T$ and such that $\|U_s\|_{\CC^1} \le c K s^{\beta -1 \over 2}$.
Since furthermore $S_{t}$ is bounded by $C t^{-{1\over 2}}$ as a linear operator from $\L^\infty$ into $\CC^1$, this
immediately implies that $\CM_{T,\Psi}^{(1)} v$  belongs to $\CC^1_T$ and that
\begin{equ}[e:boundM1]
\|\CM_{T,\Psi}^{(1)} v\|_{1,T} \le C_K (1+\/\Psi\/)T^{\beta \over 2}\;,
\end{equ}
for some constant $C_K$. Note that if $g$ and $Df$ are bounded, then we can take $C_K$ proportional to $K$.

Regarding the modulus of continuity of the map $\CM_{T,\Psi}^{(1)}$, we have the identity
\begin{equs}
\bigl(\CM_{T,\Psi}^{(1)} (v- \bar v)\bigr)_t &= \int_0^t S_{t-s} \bigl(g(u_s) \bigl(\d_x v_s - \d_x \bar v_s\bigr) + \hat f(u_s) - \hat f(\bar u_s)\bigr)\,ds \\
&\quad + \int_0^t S_{t-s} \bigl(g(u_s) - g(\bar u_s)\bigr) \bigl(\d_x \bar v_s + \d_x U_s\bigr)\,ds\;.
\end{equs}
Since $\|g(u_s) - g(\bar u_s)\|_{\infty} \le C_K \|v_s - \bar v_s\|_{\infty}\le C_K \|v_s - \bar v_s\|_{\CC^1}$, and similarly for $f$, we obtain as before the bound
\begin{equ}
\|\CM_{T,\Psi}^{(1)} (v - \bar v)\|_{1,T} \le C_K (1+\/\Psi\/)\|v - \bar v\|_{1,T} T^{\beta \over 2}\;,
\end{equ}
where $C_K \propto K$ if $g$, $Dg$, and $Df$ are bounded.
Let us now turn to the second term. Here, the integrand of the inner integral should be
interpreted as a rough path controlled by $\Psi_s$, which is built from $\Psi_s$, $v_s$, and $U_s$ by
making use of Lemma~\ref{lem:comp}.

More explicitly, the integrand (without the prefactor $p_{t-s}(x-y)$) is 
the controlled rough path $(Y_s, Y_s') \in \CC^\alpha_{\Psi_s}$ given by
\begin{equs}\label{defVV'}
Y_s(x) &= g\bigl(v(x,s) + \psi(x,s) + U_s(x)\bigr)\;, \\
Y_s'(x) &= Dg\bigl(v(x,s) + \psi(x,s) + U_s(x)\bigr) \;.
\end{equs}
(We stress again that we view $s$ here simply as an index, with the `temporal' variable of our controlled 
rough path being given by $x$.) It then follows from Lemma~\ref{lem:comp} that there exists a constant
$\hat C_K$ such that 
\begin{equ}
\|Y_s\|_{\Psi_s,\alpha} \le \hat C_K \bigl(\|v_s\|_{2\alpha} + \|U_s\|_{2\alpha}\bigr)(1+\/\Psi\/)^2\;. 
\end{equ}
%It follows immediately that if we denote by $R_s$ the `remainder term' for the controlled rough path $(Y_s,Y'_s)$ as in 
%\eref{e:defYR} then, from a  calculation similar to that performed in Lemma~\ref{lem:comp}, we have the bound
%\begin{equs}[e:boundYYR]
%\|Y_s\|_{\alpha} &\le  \|Dg\|_\infty \bigl(\|\psi_s\|_{\alpha} + \|v_s\|_{\alpha} + \|U_s\|_{\alpha}\bigr) \;,\\
%\|Y_s'\|_{\alpha} &\le  \|D^2g\|_\infty \bigl(\|\psi_s\|_{\alpha} + \|v_s\|_{\alpha} + \|U_s\|_{\alpha}\bigr) \;,\\
%\|R_s\|_{2\alpha} &\le \|Dg\|_\infty \bigl(\|v_s\|_{2\alpha} + \|U_s\|_{2\alpha}\bigr) + \|D^2 g\|_\infty \|\psi_s\|_{\alpha}^2\;.
%\end{equs}
%In general, there exists a constant $\hat C_K$ such that $g$ can be replaced by a function with $\|g\|_\infty + \|Dg\|_\infty + \|D^2 g\|_\infty\le \hat C_K$ without changing $Y_s$, so that 
Since $\|U_s\|_{2\alpha} \le C s^{-{2\alpha-\beta \over 2}}$ by standard properties of the heat semigroup,
we have the bound
\begin{equ}
\|Y_s\|_{\Psi_s,\alpha} \le C_K (1+\/\Psi\/)^2 (1+ s^{-{2\alpha-\beta \over 2}})\;,
\end{equ}
Again, it is straightforward to check that if the first two derivatives of $g$ are bounded, then we can take $\hat C_K$ independent
of $K$, and therefore we have $C_K$ proportional to $K$.

It now immediately follows from Lemma~\ref{lem:convHeat} there exists a constant $C_K$ such that 
\begin{equ}
\Bigl|\d_x \int_0^{2\pi} \!\!\! p_{t-s}(x-y) Y_s(y) \,d\Psi_s(y)\Bigr| \le C_K (1+\/\Psi\/)^3 \bigl(1 + s^{-{2\alpha-\beta \over 2}}\bigr) (t-s)^{{\alpha \over 2}-1}\;,
\end{equ}
so that one has the bound
\begin{equ}[e:boundM2]
\|\CM_{T,\Psi}^{(2)} v\|_{1,T} \le C_K (1+\/\Psi\/)^3 T^{\beta - \alpha \over 2}\;,
\end{equ}
where $C_K$ is proportional to $K$ if $Dg$ and $D^2g$ are bounded.

In order to obtain control over the modulus of continuity of $\CM_{T,\Psi}^{(2)}$, we denote by 
$(\bar Y_s, \bar Y_s')$ the controlled rough path associated to $\bar v_s$, so that
\begin{equ}[e:diffY]
\bar Y_s(x) - Y_s(x) = \int_0^1 Dg\bigl(\psi_s(x) + U_s(x) +  v_s(x) + \lambda \delta v_s(x)\bigr)\bigl(\bar v_s(x)- v_s(x)\bigr)\,d\lambda\;.
\end{equ}
Applying Lemma~\ref{lem:comp} to the integrand of this expression, we obtain as before the bound
\begin{equ}
\|\bar Y_s - Y_s\|_{\Psi_s,\alpha} \le \hat C_K (1+\/\Psi\/)^2 (1+ s^{-{2\alpha-\beta \over 2}})\|\bar v_s - v_s\|_{\CC^{2\alpha}}\;,
\end{equ}
so that there exists
a constant $C_K$ such that 
\begin{equ}
\|\CM_{T,\Psi}^{(2)} v\|_{1,T} \le C_K (1+\/\Psi\/) T^{\beta - \alpha \over 2}\|v - \bar v\|_{1,T}\;.
\end{equ}
Note that even if the derivatives of $g$ are all bounded, this constant this time actually grows quadratically in $K$, 
but this turns out not to be a problem.
Combining these bounds and using the fact that $\beta > \alpha$ by assumption, 
it follows immediately that for $T$ sufficiently small, $\CM_{T,\Psi}$ maps the ball of radius
$K$ in $\CC_T^1$ into itself and satisfies $\|\CM_{T,\Psi} v\|_{1,T} \le {1\over 2}\|v - \bar v\|_{1,T}$, so that 
it admits a unique fixed point in this space. Iterating this argument in the usual way, we construct a local solution
up to some blow-up time $\tau$ with $\lim_{t \to \tau}\|u_t\|_{\CC^\beta} = +\infty$.

It remains to show that the solution constructed in this way is global if $g$, $Dg$, $D^2g$, and $Df$ are bounded.
This only uses the fact that in this case, as a consequence of \eref{e:boundM1} and \eref{e:boundM2}, 
there exists $T_\star > 0$ depending on $\/\Psi\/$ but
\textit{independent of $K$} such that 
\begin{equ}[e:boundM]
\|\CM_{T,\Psi} v\|_{1,T} \le {K\over 2}\;,
\end{equ}
for every $T \le T_\star$, provided that $\|u_0\|_{\CC^\beta} \le K$, $\|\psi_0\|_{\CC^\beta} \le K$, and $\|v\|_{1,T}\le K$.
Let now $\hat T = \inf\{t \ge 0\,:\, \|v_t\|_1 \ge K\}$. If $\hat T \le T_\star$, then the assumptions for \eref{e:boundM} to
hold are satisfied by construction so that, since $v$ is a fixed point of $\CM_{T,\Psi}$, we conclude
that $\|v_{\hat T}\|_1 \le K/2$, a contradiction. Therefore, we must have $\hat T > T_\star$, from which we conclude that 
$\tau > T_\star$. Since $T_\star$ is independent of the initial condition $u_0$, this argument can be iterated
up to arbitrarily long times, thus yielding the existence and uniqueness of global solutions.
\end{proof}

\begin{remark}
Inspection of the proof reveals that we actually only need $g \in \CC^3$ and $f \in \CC^1$ for the existence
and uniqueness of local solutions.
\end{remark}

\subsection{Stability of the solution}
\label{sec:stability}

As an almost immediate corollary of the results obtained in the previous section, we 
obtain the stability of the solutions under perturbations of the driving noise and of the initial
condition. We have the following result:

\begin{corollary}\label{cor:regular}
Let $f$ and $g$ be smooth and let $u_0, \bar u_0\in \CC^\beta$ and $\Psi, \bar \Psi \in \CC(\R_+, \DD^\alpha)\cap \CC(\R_+, \CC^\beta)$ with 
${1\over 3} < \alpha < \beta < {1\over 2}$. Denote the corresponding local solutions by $u$, $\bar u$ and the blow-up times
by $\tau$, $\bar \tau$. Then, for every such $u_0$ and $\Psi$, every $T < \tau$, and every $\eps > 0$, there exists $\delta > 0$
such that $\bar \tau \ge T$ and
\begin{equ}
\sup_{t \le T} \|u_t - \bar u_t\|_{\CC^\beta} \le \eps\;,
\end{equ}
for all $\bar u_0$ and $\bar \Psi$ such that
\begin{equ}
\Delta_{u,\Psi} \eqdef \|u_0 - \bar u_0\|_{\CC^\beta} + \sup_{t \le T} \|\Psi_t - \bar \Psi_t\|_\beta + \sup_{t \le T} \|\PPsi_t - \bar \PPsi_t\|_{2\alpha} \le \delta\;,
\end{equ}
where $\PPsi_t$ denotes the area process of $\Psi_t$ as before.
\end{corollary}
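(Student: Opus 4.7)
The plan is to recast the claim as a continuity statement for the Picard map $\CM_{T,\Psi}$ introduced in the proof of Theorem~\ref{theo:mild}. Since $T < \tau$, by continuity $M := 1 + \sup_{t \leq T} \|u_t\|_{\CC^\beta}$ is finite; I multiply $f$ and $g$ by a smooth cutoff supported in $\{|y| \leq 3M\}$ to obtain $\tilde f, \tilde g$ with bounded derivatives of all orders. On $[0, T]$, $u$ solves both the original and the modified equation. So long as $\sup_{t \leq T}\|\bar u_t\|_{\CC^\beta} \leq 2M$, the perturbed solution $\bar u$ likewise satisfies the modified equation, and I will choose $\delta$ small enough that this a priori bound is preserved throughout $[0, T]$, which in particular forces $\bar\tau \geq T$.

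Working from now on with the bounded coefficients, the estimates in the proof of Theorem~\ref{theo:mild} produce a time $T_\star > 0$, depending only on $\sup_{t \leq T}(\|\Psi_t\|_\alpha + \|\PPsi_t\|_{2\alpha})$ and $M$, on which the map $\CM_{T_\star, \Psi'}$ is a $\tfrac12$-contraction on the ball $\{\|v\|_{1, T_\star} \leq 2M\}$ for every $\Psi'$ within distance $1$ of $\Psi$ in the relevant norm. For the fixed points $v = \CM_{T_\star, \Psi}(v)$ and $\bar v = \CM_{T_\star, \bar\Psi}(\bar v)$, I would decompose
\begin{equ}
\bar v - v = \bigl[\CM_{T_\star, \bar\Psi}(\bar v) - \CM_{T_\star, \bar\Psi}(v)\bigr] + \bigl[\CM_{T_\star, \bar\Psi}(v) - \CM_{T_\star, \Psi}(v)\bigr]\;.
\end{equ}
The contraction property bounds the first bracket by $\tfrac12 \|\bar v - v\|_{1, T_\star}$, so the task reduces to bounding the second bracket by $C T_\star^{(\beta-\alpha)/2}\Delta_{u,\Psi}$. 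Rearranging and then converting from $\CC^1$ to $\CC^\beta$ via the inclusion on the bounded interval $[0, 2\pi]$ gives $\sup_{t \leq T_\star}\|u_t - \bar u_t\|_{\CC^\beta} \leq C' \Delta_{u,\Psi}$.

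The main obstacle is the second bracket, which splits into its $\CM^{(1)}$ and $\CM^{(2)}$ parts. The drift piece depends on $\Psi$ only through $\psi_s$ and on $u_0$ only through $U_s = S_s(u_0-\psi_0)$; boundedness of $D\tilde f$ and $D\tilde g$ combined with the smoothing estimate $\|S_{t-s}\|_{\L^\infty \to \CC^1} \leq C(t-s)^{-1/2}$ yields a bound of order $T_\star^{1/2} \Delta_{u,\Psi}$ in a straightforward manner. The rough integral piece is more delicate and requires a difference version of Lemma~\ref{lem:convHeat}: I would repeat the rescaling argument of Proposition~\ref{prop:convolution}, splitting into unit subintervals and replacing the one-sided bound \eref{e:boundInt} by the two-sided difference bound \eref{e:boundIntDiff} of Theorem~\ref{theo:integral}. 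Since $\|\Psi - \bar\Psi\|_\alpha$ and $\|\PPsi - \bar\PPsi\|_{2\alpha}$ rescale exactly as $\|\Psi\|_\alpha$ and $\|\PPsi\|_{2\alpha}$ do under $t \mapsto t/\lambda$, the $\lambda^{-\alpha}$ decay is preserved, producing a bound of order $T_\star^{(\beta-\alpha)/2}\Delta_{u,\Psi}$, together with a contribution of the same order from the difference of the controlled rough paths $(g(u_s), Dg(u_s))$ and $(g(\bar u_s), Dg(\bar u_s))$, estimated via Lemma~\ref{lem:comp}.

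Finally, I iterate the short-time estimate over a partition of $[0, T]$ with mesh $T_\star$. At each step the bound on the difference of initial data (now at time $kT_\star$, in $\CC^\beta$) is multiplied by a universal constant $C'$, and after $\lceil T/T_\star\rceil$ steps the bound on $[0, T]$ becomes $(C')^{\lceil T/T_\star\rceil}\Delta_{u,\Psi}$. Choosing $\delta$ so that this quantity is less than $\min(\eps, M)$ simultaneously delivers the desired closeness and sustains the a priori assumption $\|\bar u_t\|_{\CC^\beta} \leq 2M$ used to justify the coefficient cutoff, which also forces $\bar\tau \geq T$.
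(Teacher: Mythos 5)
Your proposal is correct and follows essentially the same route as the paper: the paper's proof consists precisely of reducing the claim to the Lipschitz bound $\|\CM_{u_0,\Psi} v - \CM_{\bar u_0,\bar \Psi} v\|_{1,T} \le C_K \Delta_{u,\Psi}$ on bounded sets (obtained by the same estimates as in Theorem~\ref{theo:mild}, i.e.\ the difference bound \eref{e:boundIntDiff} fed through the rescaling of Proposition~\ref{prop:convolution}), followed by ``standard arguments''. Your coefficient cutoff, the contraction-plus-perturbation decomposition, and the iteration over subintervals of length $T_\star$ are exactly those standard arguments made explicit, so the two proofs coincide in substance.
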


\begin{proof}
Denote by $\CM_{u_0,\Psi}$ the same map as in \eref{e:defM}, but where we change notation in order to
suppress the dependency on $T$ (which is not relevant here), and show instead the dependency on the
initial condition $u_0$. The claim then follows from standard arguments if we can show that, for every $K>0$, there
exists a constant $C_K$ such that the bound
\begin{equ}
\|\CM_{u_0,\Psi} v - \CM_{\bar u_0,\bar \Psi} v\|_{1,T} \le C_K \Delta_{u,\Psi}\;,
\end{equ}
holds provided that $\Delta_{u,\Psi} \le 1$, and that
\begin{equ}
\|u_0\|_{\CC^\beta} + \/\Psi\/ + \|v\|_{1,T} \le K\;.
\end{equ}
This in turn follows immediately from considerations similar to those in the proof of Theorem~\ref{theo:mild}.
\end{proof}

In particular, it follows from this that the notion of a solution given by Definition~\ref{def:sol} coincides with those
solutions that are obtained by molllifying the noise in \eref{e:Burgers} and passing to the limit. We can formulate this
more precisely as:

\begin{corollary}
Let $\phi\colon \R \to \R$ be a smooth compactly supported function such that $\int_\R\phi(x)\,dx = 1$ and set $\phi_\eps(x) = \phi(x/\eps)/\eps$. Define the operator
$Q_\eps\colon \L^2(S^1)\to \L^2(S^1)$ by $ \bigl(Q_\eps u\bigr)(x) = \int_{S^1} \phi_\eps(x-y)\,u(y)\,dy$ and consider the solution
$u_\eps$ to 
\begin{equ}
du_\eps = \d_x^2 u_\eps\,dt + f(u_\eps)\,dt + g(u_\eps)\,\d_x u_\eps\,dt + \sigma Q_\eps \,dW(t)\;,
\end{equ}
with blow-up time $\tau_\eps$. Then, there exists a sequence $\hat \tau_\eps$ of stopping times
converging almost surely to $\tau$ such that 
\begin{equ}
\lim_{\eps \to 0} \sup_{t \le \tau_\eps} \|u_\eps(t) - u(t)\|_{\CC^\beta}  = 0\;,
\end{equ}
where $u$ is the solution to \eref{e:Burgers} as given by Theorem~\ref{theo:mild}.
\end{corollary}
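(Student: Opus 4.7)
The strategy is to recognise the classical solution $u_\eps$ of the mollified equation as a \emph{rough} mild solution in the sense of Definition~\ref{def:sol}, driven by the canonical lift of the mollified stationary process, and then to deduce the statement from Corollary~\ref{cor:regular} combined with a convergence statement for the drivers.

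Let $\psi_\eps$ denote the stationary solution of $d\psi_\eps = (\d_x^2 - 1)\psi_\eps\,dt + \sigma Q_\eps\,dW$. Because $Q_\eps$ is convolution with a $\CC^\infty$ kernel, for each fixed $t$ the path $x\mapsto \psi_\eps(x,t)$ is almost surely smooth, so its canonical rough path lift $\Psi_{\eps;t}=(\psi_\eps(\cdot,t),\PPsi_{\eps;t})$, with $\PPsi_{\eps;t}$ defined classically as in~\eref{e:intX} (a genuine Riemann integral), belongs to $\DD^\alpha$ for every $\alpha<{1\over 2}$. Since the rough integral in~\eref{e:defvmild} reduces to an ordinary Riemann integral when the driver is smooth in $x$, the classical solution $u_\eps$ is a mild solution in the sense of Definition~\ref{def:sol} with $\Psi$ replaced by $\Psi_\eps$, and its blow-up time $\tau_\eps$ coincides with that of the associated rough solution.

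Next, we show that $(\Psi_{\eps},\PPsi_{\eps})\to (\Psi,\PPsi)$ in $\CC([0,T], \DD^\alpha)$ almost surely along a subsequence, for every ${1\over 3}<\alpha<\beta<{1\over 2}$ and $T>0$. Pointwise in $t$, this follows from the Gaussian rough path approximation results of \cite{GaussI} recalled in Remark~\ref{rem:canonicalLift}: the covariance of $\psi_\eps(\cdot,t)$ tends to that of $\psi(\cdot,t)$ and, since $Q_\eps$ is a contraction on $\L^2$, has two-dimensional $p$-variation bounded uniformly in $\eps$ for some $p<2$. Uniformity in $t$ is obtained by repeating the Kolmogorov argument of Lemma~\ref{lem:defPsi}: the moment bounds
\begin{equ}
\E\bigl(\|\Psi_{\eps;t}-\Psi_{\eps;s}\|_\alpha^{2q} + \|\PPsi_{\eps;t}-\PPsi_{\eps;s}\|_{2\alpha}^q\bigr) \le C_q |t-s|^{\theta q}\;,
\end{equ}
hold with $C_q$ and $\theta>0$ \emph{independent of $\eps$}, and combined with pointwise-in-$t$ convergence this yields the claimed convergence.

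Finally, fix a sequence $T_n \uparrow \tau$ and $\eta_n \downarrow 0$. For each $n$, Corollary~\ref{cor:regular} applied with $\bar u_0 = u_0$ produces a threshold $\delta_n>0$ such that
\begin{equ}
\sup_{t\le T_n}\bigl(\|\Psi_t-\Psi_{\eps;t}\|_\beta + \|\PPsi_t-\PPsi_{\eps;t}\|_{2\alpha}\bigr) \le \delta_n
\end{equ}
implies $\tau_\eps>T_n$ and $\sup_{t\le T_n}\|u_\eps(t)-u(t)\|_{\CC^\beta}\le\eta_n$. Setting $\hat\tau_\eps := T_{N(\eps)}$, where $N(\eps)$ is the largest index for which the above bound holds, the previous paragraph guarantees $N(\eps)\to\infty$ and hence $\hat\tau_\eps\to\tau$ almost surely, giving the statement. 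The main technical obstacle is the uniform-in-$t$ convergence of the area process $\PPsi_\eps\to\PPsi$ in $\CC^{2\alpha}$, which requires $\eps$-uniform Kolmogorov-type estimates for the two-parameter Gaussian rough path; everything else is a direct application of Corollary~\ref{cor:regular}.
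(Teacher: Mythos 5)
Your proposal follows the paper's own route: identify the classical solution of the mollified equation with the rough mild solution driven by the canonical (Riemann) lift of the mollified stationary process, establish convergence of the lifted drivers $\Psi^\eps \to \Psi$ in the rough path topology uniformly over bounded time intervals via the Gaussian rough path results of \cite{GaussI}, and conclude by the stability statement of Corollary~\ref{cor:regular}, with the stopping times $\hat\tau_\eps$ built from an exhaustion $T_n \uparrow \tau$. The only minor deviation is that you settle for almost sure convergence of the drivers along a subsequence, whereas the paper asserts (as a consequence of \cite[Theorem~37]{GaussI}) the full uniform convergence in $\CC([0,T],\DD^\alpha)\cap\CC([0,T],\CC^\beta)$, which is what the stated limit as $\eps\to 0$ actually requires.
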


\begin{proof}
We first note that in the particular case where $\Psi_t$ is a smooth function of $x$ for every $t$ and
$\PPsi_t$ is given by \eref{e:intX} (this time reading it from right to left as a definition for $\PPsi$), then the rough integral
against $\Psi_t$ coincides with the usual Riemann integral, so that the notion of a solution given by 
Definition~\ref{def:sol} coincides with the usual notions of solution as given in \cite{DaPrato-Zabczyk92,Hairer08} 
for example.

The claim then follows from Corollary~\ref{cor:regular}, noting that $\Psi^\eps \to \Psi$ uniformly
in $\CC([0,T],\DD^\alpha)\cap\CC([0,T],\CC^\beta)$ as a consequence of \cite[Theorem~37]{GaussI}.
\end{proof}

\section{Invariant measures}
\label{sec:IM}

In this section, we show that in some cases, the invariant measure for equations of the type studied
above can be exhibited explicitly, due to the fact that the equation has a type of of gradient structure.
Indeed, let $\nu$ be the Gaussian probability measure on $\CC([0,2\pi], \R^d)$ with
covariance operator given by 
\begin{equ}[e:covnu]
K_\nu = \bigl(I - \d_x^2\bigr)^{-1}\;,
\end{equ} 
where $I$ is the identity matrix and $\d_x^2$ acts independently on every component. It is straightforward to check that 
the measure $\nu$ restricted to every subinterval of $[0,2\pi]$ of strictly smaller length is equivalent to
Wiener measure (provided that we start the Wiener process with a Gaussian initial condition). In particular, the expression
\begin{equ}
\int_0^{2\pi} G(W_x)\circ dW_x\;,
\end{equ}
is well-defined as a Stratonovich integral for every smooth function $G \colon \R^d \to \R^d$ and $\nu$-almost every $W$.
If the function $G$ has sublinear growth and a bounded derivative, then this quantity actually has exponential moments
(see Section~\ref{sec:uniformExp} below), so that we can define a probability  
measure $\mu$ as a change of measure from $\nu$ by
\begin{equ}[e:defmu]
{d\mu \over d\nu}(W) = Z^{-1}\exp \Bigl(\int_0^{2\pi} G(W_x)\circ dW_x + \int_0^{2\pi} F(W_x)\,dx\Bigr) \eqdef Z^{-1} \exp(\Xi(W))\;,
\end{equ}
where $Z$ is a normalisation constant that ensures that $\mu$ is a probability measure. Here, $F$ could be any measurable
function with subquadratic growth to ensure that this expression is integrable with respect to $\nu$.
Were it not for the periodic boundary conditions, the particular choice $F = -{1\over 2}\bigl(\div G + |G|^2\bigr)$ would yield
for $\mu$ the law of a not necessarily reversible diffusion with drift $G$.
If $G$ happens to be a gradient, the stochastic integral is reduced to a boundary term, and these equations are treated
in \cite{HairerStuartVoss07}. The main contribution of the present article is to be able to treat the non-gradient case.

Let us now suspend our disbelief for a moment and pretend just for the sake of the argument that 
$\Xi$ is differentiable as a function from $\L^2([0,2\pi],\R^d)$ into $\R$
(which of course it is not!). Formally, we can then compute the $\L^2$-derivative of $\Xi$, which yields
\begin{equ}
\bigl(D \Xi(W)\bigr)_i(x) = \d_i G_j(W(x)) {dW_j(x) \over dx} - \d_j G_i(W(x)) {dW_j(x) \over dx} + \d_i F(W(x))\;.
\end{equ}
Since on the other hand, the measure $\nu$ is invariant for the damped stochastic heat equation
\begin{equ}
du = \d_x^2 u\,dt - u\,dt + \sqrt 2 dW(t)\;, 
\end{equ}
this suggests that the measure $\mu$ given by \eref{e:defmu} is invariant for the equation
\begin{equ}[e:general]
du = \d_x^2 u\,dt + g(u)\,\d_x u\,dt + f(u)\,dt + \sqrt 2 dW(t)\;, 
\end{equ}
where $f$ and $g$ are given by
\begin{equ}[e:deffg]
f_i(u) = \d_i F(u) - u\;,\qquad g_{ij}(u) = \d_i G_j(u) - \d_j G_i(u)\;,
\end{equ}
and where $W$ is an $\L^2$-cylindrical Wiener process. (Which is just another way of stating that the driving noise
is space-time white noise, see \cite{DaPrato-Zabczyk92}.)
The aim of this section is to give a rigorous justification of this fact, which we encompass in the following theorem:

\begin{theorem}\label{theo:reversible}
Let $F$ and $G$ be $\CC^\infty$ functions with bounded derivatives of all orders such that $G$ is bounded and define
$f$ and $g$ by \eref{e:deffg}. Then, the mild solution to \eref{e:general} generates a Markov process that is reversible
with respect to the measure $\mu$ defined in \eref{e:defmu}.
\end{theorem}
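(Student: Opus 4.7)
My plan is to prove reversibility by approximation. First I would introduce a smooth approximation of \eref{e:general} that turns the SPDE into a classical (or even finite-dimensional) object for which reversibility can be checked by hand. Concretely, let $\Psi^\eps$ denote the canonical lift of the noise mollified at scale $\eps$ (equivalently, pick a Galerkin truncation at level $N$), and let $u_\eps$ be the corresponding classical solution. For these smooth approximations one can define a measure $\mu_\eps$ with density against the Gaussian reference $\nu$ given by \eref{e:defmu} with the Stratonovich integral replaced by the unambiguous smooth integral (equivalently, by a mid-point Riemann sum in the Galerkin case). The smoothed equation is then a classical SPDE with smooth coefficients, for which reversibility with respect to $\mu_\eps$ reduces to the symmetry of its generator on a core of cylinder functions.

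Second, I would verify reversibility of the approximation by the following computation. The $L^2$-gradient of $\int F(u)\,dx$ is $\nabla F(u)$, the $L^2$-gradient of $-\frac12 \scal{u,(1-\d_x^2)u}$ (the log-density of $\nu$) is $(\d_x^2-1)u$, and the key algebraic identity is that the $L^2$-gradient of the (appropriately discretized) Stratonovich integral $\int G(u)\circ du$ equals $g(u)\,\d_x u$ with $g_{ij}=\d_i G_j-\d_j G_i$. The antisymmetric combination arises precisely because differentiating the mid-point discretization of $\int G(u)\circ du$ in $u_k$ produces the two terms $\d_\ell G_j(u_k)\,u'_j$ and $-\d_j G_\ell(u_k)\,u'_j$ of opposite sign. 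Consequently the smoothed drift is $\nabla_u \log(d\mu_\eps/du)$, so the smoothed dynamics is a standard gradient Langevin system and hence reversible with respect to $\mu_\eps$; this step is essentially the finite-dimensional / smooth analogue of the formal computation preceding the theorem.

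Third, I would pass to the limit. The dynamical convergence $u_\eps \to u$ follows from Corollary~\ref{cor:regular} once it is known that the Gaussian approximation theorem (\cite[Thm~37]{GaussI}) provides $\Psi^\eps \to \Psi$ in $\CC([0,T],\DD^\alpha)\cap \CC([0,T],\CC^\beta)$. The measure convergence $\mu_\eps \to \mu$ in total variation follows from the convergence of the smoothed Stratonovich integral to the rough integral (both being identified with the Stratonovich stochastic integral $\nu$-almost surely, using the canonical lift described in Remark~\ref{rem:canonicalLift}), together with uniform integrability of the densities $\exp(\Xi_\eps)/Z_\eps$ provided by Section~\ref{sec:uniformExp}. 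Reversibility is then transferred by writing $\int \phi\,(P_t^\eps \psi)\,d\mu_\eps = \int \psi\,(P_t^\eps \phi)\,d\mu_\eps$ for bounded continuous $\phi,\psi$ and passing to the limit on both sides, obtaining the same identity for the limiting semigroup $P_t$ and the limiting measure $\mu$. The Markov property of the limiting process follows from the uniqueness guaranteed by Theorem~\ref{theo:mild}, its pathwise construction from $\Psi$, and standard arguments.

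The main obstacle, I expect, will be ensuring compatibility between the rough-path interpretation of $g(u)\,d\Psi$ appearing in the dynamics (Definition~\ref{def:sol}) and the Stratonovich interpretation of $\int G(u)\circ dW$ appearing in the density \eref{e:defmu}. If these two notions of integral did not match, the drift in \eref{e:general} would fail to be the gradient of $\log(d\mu/d\nu)$ and reversibility would break. This compatibility hinges on the fact that the canonical lift of $\psi$ from Lemma~\ref{lem:defPsi} is the one given by \cite[Thm~35]{GaussI}, which is the common limit of smooth approximations and therefore matches the Stratonovich interpretation used in defining $\mu$. A secondary technical difficulty is that the densities $d\mu_\eps/d\nu$ involve stochastic integrals and are unbounded, so uniform exponential integrability (Section~\ref{sec:uniformExp}) is essential both for convergence of $\mu_\eps$ and for passing to the limit inside expectations against the approximate semigroups.
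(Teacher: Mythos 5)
Your overall skeleton (regularise, check reversibility of the regularised dynamics, then pass to the limit using stability of the solution map, weak convergence of the approximating measures, and uniform exponential integrability) is the same as the paper's, but the specific regularisation you propose is where the real difficulty sits, and as stated it does not work. If you mollify only the noise ($\sqrt 2\, Q_\eps\, dW$) while keeping the drift of \eref{e:general}, detailed balance for $du = b(u)\,dt + \sqrt 2\, Q_\eps\, dW$ formally requires $b = Q_\eps^2\,\nabla \log \rho$ for the candidate invariant density $\rho$; since the drift of \eref{e:general} is (formally) the gradient of $\log (d\mu/du)$ with no factor $Q_\eps^2$, the mollified-noise dynamics is not reversible with respect to any measure of the form \eref{e:defmu} — indeed its linear part has invariant Gaussian measure with covariance $Q_\eps^2(1-\d_x^2)^{-1}$, mutually singular with $\nu$ — so your second step fails for that variant. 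The Galerkin variant does preserve the gradient structure (projecting drift and noise onto the same subspace keeps a Langevin system for the restricted potential), but then neither ingredient you invoke for the limit applies as stated: Corollary~\ref{cor:regular} covers perturbations of the driving rough path and initial condition, not of the nonlinearity (which is also projected), and Theorem~\ref{theo:uniformExp} proves uniform exponential integrability only for the specific Gaussian family $\nu_\eps$ with covariance \eref{e:covnueps}; for spectral truncations you would have to redo essentially all of Section~\ref{sec:uniformExp}, which is the hardest part of the whole argument. Your claim that $\mu_\eps \to \mu$ in total variation is also an overclaim: the paper only obtains weak convergence (Proposition~\ref{prop:convmu}), which suffices when combined with Skorokhod representation and almost sure convergence of the dynamics.

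The paper's resolution is to regularise the linear operator rather than the noise: adding hyperviscosity $-\eps^2\d_x^4$ simultaneously (i) replaces the reference measure by $\nu_\eps$, which charges $\CC^1$ paths so that $\Xi$ is a classical integral, (ii) preserves the exact gradient structure with respect to the unmodified cylindrical noise, so that $\mu_\eps = Z_\eps^{-1} e^{\Xi}\nu_\eps$ is reversible for the hyperviscous dynamics (Proposition~\ref{prop:revueps}), and (iii) is precisely the family for which the uniform exponential integrability (Theorem~\ref{theo:uniformExp}) and the dynamical convergence (Proposition~\ref{prop:convueps}, which needs a separate argument since the semigroup rather than the noise is perturbed) are actually proved. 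Your closing remark about compatibility between the canonical rough-path lift and the Stratonovich integral in \eref{e:defmu} does identify a genuine issue, and it is handled in the paper exactly as you suggest, via the characterisation of the canonical lift as the common limit of smooth approximations.
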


We postpone the proof of this result to the end of the section 
and we first lay out the technique and prove a number of intermediate results.
Our technique will be to first consider the smoother problem with
reference measure $\nu_\eps$ having covariance operator $K_\nu^\eps$ given by
\begin{equ}[e:covnueps]
K_\nu^\eps = \bigl(I - \d_x^2 + \eps^2\d_x^4\bigr)^{-1}\;.
\end{equ}
One can check that the measure $\nu_\eps$ charges paths that are $\CC^1$. Furthermore, the map $\Xi$ is continuous
from $\CC^1$ to $\R$ (with the `stochastic integral' now being a simple Riemann integral), so that we can define
a sequence of probability measures $\mu_\eps$ by
\begin{equ}[e:defmueps]
{d\mu_\eps \over d\nu_\eps}(W) = Z_\eps^{-1} \exp(\Xi(W))\;,
\end{equ}
where $Z_\eps$ is a suitable normalisation constant. One then has the following result:

\begin{proposition}\label{prop:revueps}
Let $F$ and $G$ have bounded derivatives of all orders and let $f$ and $g$ be defined as in \eref{e:deffg}. 
Then, for every fixed value $\eps > 0$, the stochastic PDE
\begin{equ}[e:defueps]
du_\eps = -\eps^2 \d_x^4 u_\eps\,dt + \d_x^2 u_\eps\,dt  + g(u_\eps)\,\d_x u_\eps\,dt + f(u_\eps)\,dt + \sqrt 2 dW(t)\;,
\end{equ}
has unique global solutions in $\CC^1$. Furthermore, it admits the measure $\mu_\eps$ as an invariant measure
and the corresponding Markov process is reversible. 
\end{proposition}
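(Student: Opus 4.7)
\textbf{Proof plan for Proposition~\ref{prop:revueps}.}

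The existence and uniqueness of global $\CC^1$ solutions is essentially classical once the fourth-order term is present. First, I would verify that the linear semigroup generated by $A = -\eps^2 \d_x^4 + \d_x^2 - I$ is analytic on $\L^2$, its Fourier symbol decays like $\eps^2 k^4$ at large frequencies, and the stochastic convolution against space-time white noise has $\CC^\gamma$ sample paths in space for every $\gamma < 3/2$ (with H\"older-continuous dependence in $t$). This gives that the Ornstein--Uhlenbeck part lives naturally in $\CC^1$. The nonlinearity $b(u) := g(u)\d_x u + f(u)$ is locally Lipschitz on $\CC^1$, since $F$ and $G$ have bounded derivatives of all orders and $G$ itself is bounded; a standard Banach fixed-point argument in $\CC([0,T],\CC^1)$ then yields a unique local mild solution. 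Because $g$ is bounded and $f$ has at most linear growth, one gets a priori $\CC^1$-bounds using the smoothing of $e^{tA}$ (the $\eps^2\d_x^4$ dominates the first-order term $g(u)\d_x u$), and these extend the solution globally.

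The core of the argument is the reversibility, which I would reduce to a symmetry identity for the generator on a sufficiently rich class of test functions. Let $L_0$ be the OU generator $(L_0 h)(u) = \mathrm{Tr}(D^2 h(u)) + \langle Au, Dh(u)\rangle_{\L^2}$, which is symmetric on $\L^2(\nu_\eps)$ with Dirichlet form $\mathcal{E}_0(h,k) = \int \langle Dh, Dk\rangle\,d\nu_\eps$. For $u \in \CC^1$ we may regard $\Xi$ as a smooth cylindrical functional with $\L^2$-gradient that I would compute by direct differentiation and an integration by parts in $x$: writing $\Xi(u) = \int_0^{2\pi}\!G_j(u)u'_j\,dx + \int_0^{2\pi}\!F(u)\,dx$ and differentiating in direction $h$, the middle term after IBP produces exactly the antisymmetric combination $\d_j G_k - \d_k G_j = g_{jk}$, so
\begin{equ}
D\Xi(u) = g(u)\,\d_x u + \nabla F(u).
\end{equ}
Hence equation~\eref{e:defueps} is precisely the perturbed OU equation $du = Au\,dt + D\Xi(u)\,dt + \sqrt{2}\,dW$, whose generator on cylindrical $h$ is $L h = L_0 h + \langle D\Xi, Dh\rangle$.

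Reversibility now follows from a one-line Gaussian integration by parts: for cylindrical $h,k$ one computes
\begin{equ}
\int (Lh)\,k\,d\mu_\eps = Z_\eps^{-1}\!\!\int\!\bigl((L_0 h)\,k e^\Xi + \langle D\Xi, Dh\rangle\, k e^{\Xi}\bigr) d\nu_\eps
= -\!\int\!\langle Dh, Dk\rangle\,d\mu_\eps,
\end{equ}
where in the first equality one uses $\mathcal{E}_0$-symmetry to move $L_0$ onto $k e^\Xi$, and then the $D\Xi$-cross term produced by $D(k e^\Xi) = (Dk)e^\Xi + k(D\Xi)e^\Xi$ cancels the explicit $\langle D\Xi,Dh\rangle$ contribution. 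The resulting expression is manifestly symmetric in $h$ and $k$, so $L$ is symmetric on the core of smooth cylindrical functions in $\L^2(\mu_\eps)$. Reversibility and invariance of $\mu_\eps$ then follow by the standard procedure: verify that cylindrical functions are a core for the semigroup (which needs the exponential integrability of $\Xi$ from Section~\ref{sec:uniformExp} in order to have $d\mu_\eps/d\nu_\eps \in \L^p(\nu_\eps)$ for some $p>1$), identify the Markov semigroup with the one generated by the closure of $L$ via the martingale problem for \eref{e:defueps}, and conclude that this semigroup is self-adjoint on $\L^2(\mu_\eps)$.

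The main technical obstacle is the last step: establishing that the Markov semigroup of the $\CC^1$-valued solution process coincides with the symmetric Friedrichs extension built from $L$. I would handle this by approximating $u_\eps$ by finite-dimensional Galerkin truncations $u_\eps^{(N)}$ (keeping only the first $N$ Fourier modes), for which the same symmetry computation is rigorous and the analogous finite-dimensional invariant measure $\mu_\eps^{(N)}$ is exactly reversible; then pass to the limit $N\to\infty$ using stability of $u_\eps^{(N)}$ in $\CC([0,T],\CC^1)$ together with the tightness/convergence of $\mu_\eps^{(N)}$ to $\mu_\eps$, which again relies crucially on the uniform exponential integrability proved in Section~\ref{sec:uniformExp}.
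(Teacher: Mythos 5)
Your argument is correct and is essentially the route the paper takes, the only difference being that the paper compresses it into citations (local well-posedness from \cite{Hairer08}, global well-posedness and reversibility ``as in \cite[Prop.~26]{Hypo}, see also \cite{Zabczyk88}''), and those references carry out precisely the gradient-perturbation-of-Ornstein--Uhlenbeck argument you reconstruct: the identification $D\Xi(u) = g(u)\,\d_x u + \nabla F(u)$ via integration by parts with periodic boundary conditions, the symmetry of the generator on $\L^2(\mu_\eps)$ with $d\mu_\eps \propto e^{\Xi}\,d\nu_\eps$, and a finite-dimensional approximation to make the formal Dirichlet-form computation rigorous. One minor remark: for fixed $\eps>0$ you do not need the uniform-in-$\eps$ exponential integrability of Section~\ref{sec:uniformExp} --- since $|\Xi(W)| \le C\bigl(1+\|\d_x W\|_{\L^1}\bigr)$ and $\nu_\eps$ charges $\CC^1$ paths, Fernique's theorem already provides all the integrability needed here (and uniformly in your Galerkin parameter $N$); the uniform-in-$\eps$ statement is only required later, in the proof of Proposition~\ref{prop:convmu}.
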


\begin{proof}
The local well-posedness of solutions is standard and follows for example 
from \cite{Hairer08}. The global well-posedness and the invariance of 
$\mu_\eps$ then follow as in \cite[Prop.~26]{Hypo}, 
see also \cite{Zabczyk88}.
\end{proof}

Furthermore, one has the following convergence result:

\begin{proposition}\label{prop:convmu}
One has $\mu_\eps \to \mu$ weakly in the space $\CC^\alpha$ for every $\alpha < {1\over 2}$.
\end{proposition}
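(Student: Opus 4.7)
Plan: My strategy is to couple the two Gaussian reference measures $\nu_\eps$ and $\nu$ on a single probability space, show almost sure convergence of the densities along this coupling by reducing the Stratonovich integral in $\Xi$ to a continuous functional of a rough path, and then promote this to weak convergence by uniform exponential integrability.

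First, I would realise both measures using a common white noise $\xi$ on $\L^2(S^1,\R^d)$, setting $W = (I-\d_x^2)^{-1/2}\xi$ and $W_\eps = (I-\d_x^2+\eps^2\d_x^4)^{-1/2}\xi$. In Fourier coordinates this differs only by replacing the weight $(1+k^2)^{-1/2}$ with $(1+k^2+\eps^2 k^4)^{-1/2}$, so a direct second moment computation, analogous to the one in the proof of Lemma~\ref{lem:defPsi}, yields $\E\|W_\eps - W\|_{\CC^\alpha}^{2q}\to 0$ for every $q\ge 1$ and every $\alpha < 1/2$. Moreover, since $W_\eps$ is a smooth Gaussian process converging to $W$ in $\L^2$ with matching uniform regularity, the canonical rough path lift $(W_\eps,\mathbb{W}_\eps)$ (defined from $W_\eps$ by iterated Riemann integration) converges in $\DD^\alpha$ to the natural Gaussian lift $(W,\mathbb{W})$ of $W$, by the Friz--Victoir approximation results invoked in Remark~\ref{rem:canonicalLift}.

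Next, I would show that $\Xi(W_\eps)\to \Xi(W)$ almost surely under this coupling. The term $\int_0^{2\pi} F(W_\eps(x))\,dx$ is continuous with respect to uniform convergence, so that part is immediate. For the stochastic part, the key observation is that because $W_\eps$ is $\CC^1$ the Stratonovich integral $\int_0^{2\pi} G(W_\eps)\circ dW_\eps$ reduces to an ordinary Riemann integral, which in turn agrees (by Section~\ref{sec:canlift}--\ref{sec:compos}) with the rough integral of the controlled rough path $(G(W_\eps),DG(W_\eps))$ against $(W_\eps,\mathbb{W}_\eps)$. Similarly, $\int_0^{2\pi} G(W)\circ dW$ is by construction the rough integral against the canonical Gaussian lift $(W,\mathbb{W})$. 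Applying Lemma~\ref{lem:comp} together with the continuity bound \eref{e:boundIntDiff} to the controlled paths $(G(W_\eps),DG(W_\eps))$ and $(G(W),DG(W))$ — whose distance in $\CC^\alpha_{W_\eps}$ versus $\CC^\alpha_W$ is controlled by $\|W_\eps-W\|_{\CC^\beta}$ for any $\beta\in(\alpha,1/2)$ — yields the desired convergence of the stochastic integrals.

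Finally, I would upgrade almost sure convergence to weak convergence of the normalised measures. By the uniform exponential integrability results of Section~\ref{sec:uniformExp} applied to $\Xi$ (using that $G$ is bounded with bounded derivatives and $F$ has at most quadratic growth), there exists $p>1$ with
\begin{equ}
\sup_\eps \E\bigl[\exp(p\,\Xi(W_\eps))\bigr] < \infty\;.
\end{equ}
This gives uniform integrability of $\{\exp(\Xi(W_\eps))\}$ along the coupling, so for any bounded continuous $\Phi\colon\CC^\alpha\to\R$, dominated convergence yields $\E[\Phi(W_\eps)\exp(\Xi(W_\eps))]\to \E[\Phi(W)\exp(\Xi(W))]$. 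Taking $\Phi\equiv 1$ shows $Z_\eps\to Z$, and the ratio $\int\Phi\,d\mu_\eps = Z_\eps^{-1}\E[\Phi(W_\eps)\exp(\Xi(W_\eps))]$ converges to $\int\Phi\,d\mu$.

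The main obstacle is the second step, specifically the identification of the smooth Riemann integral $\int G(W_\eps)\,dW_\eps$ with a rough integral and its convergence to the Stratonovich limit. What makes this tractable is that $\nu_\eps$ charges $\CC^1$ paths, so no pathwise rough integration is needed on the $\eps>0$ side; only the $\eps=0$ limit must be understood via rough paths, and the convergence of the canonical lifts $(W_\eps,\mathbb{W}_\eps)$ towards the Gaussian rough path associated to $\nu$ is exactly the setting handled by \cite{GaussI}.
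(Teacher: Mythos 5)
Your proposal is correct and rests on exactly the same three ingredients as the paper's proof: convergence of the reference Gaussian measures at the level of their rough path lifts (via the approximation results of \cite{GaussI}), continuity of $\Xi$ as a functional on $\DD^\alpha$ through the rough-integral estimates, and the uniform exponential integrability of Theorem~\ref{theo:uniformExp} to pass to the limit in the normalised densities. The only difference is presentational: you realise the weak convergence $\tilde\nu_\eps\to\tilde\nu$ through an explicit white-noise coupling and argue by almost sure convergence plus uniform integrability (note this is Vitali rather than dominated convergence), whereas the paper quotes the weak convergence of the lifted measures directly and applies the continuous mapping argument.
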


\begin{proof}
Denote by $\tilde \nu_\eps$ the lift of $\nu_\eps$ to a measure on $\DD^\alpha$. Since $\nu_\eps$ charges $\CC^1$ functions,
this lift is performed by simply associating to each element its `area process' given by a standard Riemann integral. 
On the other hand, we can lift $\nu$ to a measure $\tilde \nu$ on $\DD^\alpha$ in a canonical way as in Lemma~\ref{lem:defPsi}. (Note that 
this yields the same measure as if we were to construct the area process by Stratonovich integration.)
It then follows from \cite[Theorem~35]{GaussI} that $\tilde \nu_\eps \to \tilde \nu$ weakly in $\DD^\alpha$ as $\eps \to 0$.

Since  $\Xi$ is continuous as a map from $\DD^\alpha$ to $\R$, the claim then follows from the uniform exponential integrability of
$\Xi$ with respect to $\tilde \nu_\eps$. Unfortunately, this uniform exponential integrability turns out to be a highly non-trivial
fact, the proof of which is postponed to Theorem~\ref{theo:uniformExp} below.
\end{proof}

Our final ingredient is the convergence of solutions to \eref{e:defueps} to those of \eref{e:general}. 
Note that this is not a completely straightforward application of the approximation result given in Corollary~\ref{cor:regular},
since we change the linear part of the equation, rather than the noise. However, the statement is quite similar:

\begin{proposition}\label{prop:convueps}
Let $f$ and $g$ be as in the statement of Theorem~\ref{theo:reversible}, let $u_0 \in \CC^\beta$ for some $\beta \in ({1\over 3},{1\over 2})$ 
and let $\{u^\eps_0\}$ be a sequence such that $u_0^\eps \to u_0$ in $\CC^\beta$.
Then, for every $T>0$ and every $\alpha < \beta$, we have
\begin{equ}[e:boundconvu]
\lim_{\eps \to 0} \sup_{t \le T} \|u_t - u_t^\eps\|_{\CC^\alpha} = 0\;,
\end{equ}
where $u_t^\eps$ is the solution to \eref{e:defueps} and $u_t$ is the solution to \eref{e:general}, driven by the same
realisation of $W$.
\end{proposition}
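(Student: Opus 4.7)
The strategy is to parallel the stability argument of Corollary~\ref{cor:regular}, with the heat semigroup $S_t = e^{t\d_x^2}$ replaced by the hyperviscous semigroup $S_t^\eps = e^{t(\d_x^2 - \eps^2\d_x^4)}$, and $\Psi$ replaced by an analogously-defined Gaussian rough path $\Psi^\eps$. Concretely, I introduce the stationary solution $\psi^\eps$ of $d\psi^\eps = -\eps^2\d_x^4\psi^\eps\,dt + (\d_x^2-1)\psi^\eps\,dt + \sqrt{2}\,dW$ and its canonical rough-path lift $\Psi^\eps$, exactly as in Lemma~\ref{lem:defPsi}: the Fourier-diagonal covariance $(1+k^2+\eps^2 k^4)^{-1}$ has finite two-dimensional $p$-variation uniformly in $\eps \in [0,1]$, so \cite[Theorems~35 and~37]{GaussI} produce $\Psi^\eps \in \CC(\R_+, \DD^\alpha)$ with H\"older sample paths, and moreover yield $\Psi^\eps \to \Psi$ almost surely in $\CC([0,T], \DD^\alpha) \cap \CC([0,T], \CC^\beta)$. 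Writing $u = v + \psi$ and $u^\eps = v^\eps + \psi^\eps$, the remainders $v$ and $v^\eps$ belong to $\CC([0,T], \CC^1)$ and are fixed points of maps $\CM_{T,\Psi}$ and $\CM^\eps_{T,\Psi^\eps}$ respectively, the latter obtained from \eref{e:defM} by substituting $S_t \to S_t^\eps$ and $p_t \to p_t^\eps$.

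The key technical step is to derive $\eps$-uniform versions of the semigroup and kernel estimates used in the proof of Theorem~\ref{theo:mild}. From $\widehat{p_t^\eps}(k) = \exp(-t(k^2 + \eps^2 k^4))$ one obtains immediately $\|S_t^\eps\|_{\L^\infty \to \CC^1} \le C t^{-1/2}$, together with the short-time smoothing bounds on $\CC^\gamma$-spaces, uniformly in $\eps$. More delicate is the $\eps$-uniform analogue of Lemma~\ref{lem:convHeat}: I would establish a scaling decomposition $\d_x p_t^\eps(x) = t^{-1}\,f_{t,\eps}(x/\sqrt t)$ with $\sup_{t \in (0,1],\, \eps \in [0,1]} \|f_{t,\eps}\|_{1,1} < \infty$ in the sense of \eref{e:normf}, by splitting the Fourier integral at the scale $|k| \sim \eps^{-1}$, where the low-frequency part is a bounded perturbation of the standard derivative heat kernel and the high-frequency part is dominated by the biharmonic kernel. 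Feeding this into Proposition~\ref{prop:convolution} yields an $\eps$-uniform analogue of Lemma~\ref{lem:convHeat}. Finally, a routine Fourier-multiplier argument gives $\|S_t^\eps - S_t\|_{\CC^\beta \to \CC^\alpha} \to 0$ for each $t > 0$ and $\alpha < \beta$, with an integrable singularity at $t = 0$.

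Armed with these uniform bounds, the contraction argument in the proof of Theorem~\ref{theo:mild} can be rerun for each $\eps$, producing a common existence time $T_\star > 0$ and a uniform bound $\sup_\eps \|v^\eps\|_{1, T_\star} \le K$, the constant depending only on $\|u_0\|_{\CC^\beta}$ and on $\sup_{\eps,\,t \le T} \/\Psi^\eps_t\/$, which is almost surely finite by the convergence above. Subtracting the fixed-point equations for $v$ and $v^\eps$ and applying the same estimates then yields $\|v^\eps - v\|_{1, T_\star} \le C_K \Delta_\eps$, where the defect $\Delta_\eps$ quantifies $\|u_0^\eps - u_0\|_{\CC^\beta}$, $\sup_{t \le T_\star}\|\Psi^\eps_t - \Psi_t\|_\beta$, $\sup_{t \le T_\star}\|\PPsi^\eps_t - \PPsi_t\|_{2\alpha}$, together with the relevant differences $\|S_t^\eps - S_t\|$ and $\|p_t^\eps - p_t\|$ integrated against the Picard integrand. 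All of these tend to zero, so $v^\eps \to v$ in $\CC([0, T_\star], \CC^1)$; iterating on successive intervals up to any $T < \tau$ and combining with $\psi^\eps \to \psi$ in $\CC([0,T], \CC^\beta)$ gives \eref{e:boundconvu}.

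The principal obstacle I anticipate is the $\eps$-uniform bound $\sup_{t,\eps}\|f_{t,\eps}\|_{1,1} < \infty$ for the rescaled derivative kernel, since the biharmonic regularisation alters the short-scale structure of $p_t^\eps$ in a nontrivial way precisely when $\sqrt t \sim \eps$: at scales $\sqrt t \gg \eps$ the kernel is a small perturbation of $p_t$, at scales $\sqrt t \ll \eps$ it is governed instead by the biharmonic kernel (which scales by $t^{1/4}$ rather than $t^{1/2}$), and matching these two regimes inside the $\|\cdot\|_{1,1}$ norm is the central computation. Once this is in hand, the remainder of the argument is essentially a reprise of Corollary~\ref{cor:regular}.
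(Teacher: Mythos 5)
Your overall architecture --- lift $\psi^\eps$ to a rough path $\Psi^\eps$, show $\Psi^\eps \to \Psi$ in the rough path topology via the uniform two-dimensional $p$-variation bounds, and compare the Picard fixed points of $\CM$ and of the map $\CM^\eps$ obtained by replacing $S_t$, $p_t$ with $S^\eps_t$, $p^\eps_t$ --- is exactly the paper's. Where you genuinely diverge is in the technical core. You propose to re-derive the kernel estimate of Lemma~\ref{lem:convHeat} uniformly in $\eps$, through a scaling decomposition $\d_x p^\eps_t(x) = t^{-1} f_{t,\eps}(x/\sqrt t)$ with $\sup_{t,\eps}\|f_{t,\eps}\|_{1,1} < \infty$ (matching the regimes $\sqrt t \gg \eps$ and $\sqrt t \ll \eps$), and then to rerun the whole contraction argument uniformly in $\eps$ so as to obtain a common existence time and uniform a priori bounds. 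The paper avoids this two-scale analysis altogether: since both \eref{e:general} and \eref{e:defueps} are already globally well-posed by Theorem~\ref{theo:mild}, and since the change $(u_0,\Psi) \mapsto (u_0^\eps, \Psi^\eps)$ is already handled by the estimates behind Corollary~\ref{cor:regular}, all that is needed is the defect $\|\CM_{u_0,\Psi} v - \CM^\eps_{u_0,\Psi} v\|_{1,T} \to 0$ uniformly over bounded sets. For the drift part this follows from the factorisation $S^\eps_t = \hat S_{\eps^2 t} S_t$ and the appendix bounds, giving $\|S^\eps_t u - S_t u\|_{\CC^1} \le C \eps^{\alpha/2} t^{-(1+\alpha)/2}\|u\|_\infty$; for the rough-integral part the paper estimates the difference kernel $\delta p^\eps_t = p_t - p^\eps_t$ directly from its Fourier series, obtaining the H\"older bound \eref{e:bounddeltap}, $|\delta p^\eps_t(x) - \delta p^\eps_t(y)| \le C|x-y|^{2\alpha}\eps^{2\kappa} t^{-{1\over 2}-\alpha-\kappa}$, which fed through \eref{e:prodfY} and the rough-integral bound yields an explicit $\CO(\eps^{2\kappa})$ rate with a still-integrable time singularity. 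Your route buys uniform well-posedness of the regularised problems in the same spaces and would be the natural path if the limiting kernel estimates were not already in place; the paper's route trades your delicate matching at $\sqrt t \sim \eps$ (which you rightly flag as the central computation) for an elementary explicit estimate on the kernel difference.

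One step you should make quantitative: for the rough-integral contribution to the defect, uniform-in-$\eps$ bounds together with the qualitative convergence $\|S^\eps_t - S_t\|_{\CC^\beta \to \CC^\alpha} \to 0$ are not by themselves sufficient. You need a bound on $p_t - p^\eps_t$ (or its $x$-derivative) in a norm that can be inserted into the rough integral --- the $\CC^{2\alpha}$ norm in the kernel variable, or your scaled $\|\cdot\|_{1,1}$ norm --- carrying a positive power of $\eps$ and a time singularity that remains integrable after multiplication by the $s^{-{2\alpha - \beta \over 2}}$ blow-up of $\|v_s + U_s\|_{2\alpha}$, so that the convergence is uniform over $t \le T$ and over the bounded set of $v$. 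This is precisely what \eref{e:bounddeltap} supplies in the paper; given your uniform $\|f_{t,\eps}\|_{1,1}$ bound, the analogous interpolation between the regimes $\eps^2 \le t$ and $\eps^2 \ge t$ produces such an estimate, so the omission is presentational rather than structural, but as written the phrase that the relevant differences ``tend to zero'' glosses over it.
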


\begin{proof}
It follows from the assumptions on $f$ and $g$ that both \eref{e:general} and \eref{e:defueps} have unique global 
solutions by Theorem~\ref{theo:mild}. Denote by $S^\eps_t$ the semigroup generated by the operator $\d_x^2 -1 - \eps^2 \d_x^4$,
and denote by $\psi^\eps$ the solution to the corresponding linear equation, namely
\begin{equ}
\psi^\eps(t) = \int_{-\infty}^t S^\eps_{t-s}\,dW(s)\;.
\end{equ}
We can lift this canonically to a rough-path valued process $\Psi_t^\eps$ as before. It is straightforward to show that 
the two-dimensional $p$-variation (for any $p > 1$) of the covariance of $\psi_t^\eps$ is bounded, uniformly in $\eps$, and that 
$\E |\psi^\eps(t,x) - \psi(t,x)|^2 \le C \eps$ for some $C$, uniformly in $x$ and $t$ (see for example \cite[Prop~3.10]{SemiPert}).
It then follows as before that $\Psi^\eps \to \Psi$ uniformly over bounded intervals in the rough path topology.

We set as before $v_t = u_t - \psi_t - S_t(u_0 - \psi_0)$ and $v_t^\eps = u_t^\eps - \psi_t^\eps - S_t^\eps(u_0^\eps - \psi_0^\eps)$.
It follows from Corollary~\ref{cor:approxSHold}, \cite[Prop~3.10]{SemiPert}, and standard interpolation inequalities that
there exists some constant $C$ and an exponent $\kappa$ such that 
\begin{equ}
\E \sup_{t \le T} \bigl\|\psi_t - \psi_t^\eps + S_t^\eps(u_0^\eps - \psi_0^\eps) - S_t(u_0 - \psi_0)\bigr\|_{\CC^\alpha} 
\le C \bigl(\eps^\kappa(1+\|u_0\|_{\CC^\beta}) + \|u_0^\eps - u_0\|_{\CC^\beta}\bigr)\;,
\end{equ}  
so that it suffices to show \eref{e:boundconvu} with $u_t$ and $u_t^\eps$ replaced by $v_t$ and $v_t^\eps$
respectively.

This then follows like in
the proof of Corollary~\ref{cor:regular} once we can show that
\begin{equ}[e:wanted]
\|\CM_{u_0,\Psi} v - \CM^\eps_{u_0^\eps,\Psi^\eps} v\|_{1,T} \to 0 \;,
\end{equ}
uniformly over bounded sets, where $\CM^\eps$ is the map defined like $\CM$, but with $S_t$ replaced by $S^\eps_t$. 
Since we already have a bound on $\|\CM_{u_0,\Psi} v - \CM_{u_0^\eps,\Psi^\eps} v\|_{1,T}$ from
the proof of Corollary~\ref{cor:regular}, it suffices to bound 
$\|\CM_{u_0,\Psi} v - \CM^\eps_{u_0,\Psi} v\|_{1,T}$. We break this into two terms as in \eref{e:defM}. For the first term,
we note that for $t \le T$, we have from \eref{e:reprSeps} and \eref{e:boundreg} that
\begin{equs}
\|S_t^\eps u - S_t u\|_{\CC^1} &= \|S_{t/2} \bigl(\hat S_{\eps^2 t} - 1\bigr) S_{t/2} u\|_{\CC^1}
\le Ct^{-1/2} \|\bigl(\hat S_{\eps^2 t} - 1\bigr) S_{t/2} u\|_\infty\\
&\le C\eps^{\alpha\over 2} t^{-1/2} \|S_{t/2} u\|_{\CC^\alpha}
\le C\eps^{\alpha\over 2} t^{-(1+\alpha)/2} \|u\|_\infty\;.
\end{equs}
Since this singularity is integrable, the requested bound follows.

In order to bound the term involving the rough integral, we need to perform a preliminary computation. 
Recall that we can write $\bigl(S^\eps_t u\bigr)(x) = \int p^\eps_t(x-y)\,u(y)\,dy$, where $p_t^\eps$ is given by
\begin{equ}
p_t^\eps(x) = \sum_{k \in \Z} \exp \bigl(-t - k^2 t - \eps^2 k^4 t\bigr)f_k(x)\;,
\end{equ}
where $f_k(x) = (2\pi)^{-1} e^{ikx}$. Let now $\delta p_t^\eps(x) = p_t(x) - p_t^\eps(x) $. With this notation,
it then follows from the bound $|f_k(x) - f_k(y)| \le 2 (1 \wedge k|x-y|)$ that 
\begin{equs}
|\delta p_t^\eps(x) - \delta p_t^\eps(y)| &\le 2\sum_{k \in \Z} e^{-k^2t} \bigl(1 \wedge \eps^2 k^4 t\bigr)(1 \wedge k|x-y|)\\
&\le 2\sum_{k \in \Z} e^{-k^2t} \bigl(1 \wedge \eps^2 k^4 t  \wedge k|x-y|)\;.\label{e:deltap}
\end{equs}
Note now that, by bounding the sum by an integral, one obtains the bound
\begin{equ}
\sum_{k \in \Z} e^{-k^2t} k^n \le C t^{-(n+1)/2} \;,
\end{equ}
valid for every $n > 0$. Combining this bound with \eref{e:deltap}, we obtain
\begin{equ}
|\delta p_t^\eps(x) - \delta p_t^\eps(y)| \le {C\over \sqrt t} \bigl(1 \wedge |x-y| t^{-1/2} \wedge \eps^2 t^{-1}\bigr)
\end{equ}
so that, for any pair of exponents $\alpha, \kappa \ge 0$ such that $2\alpha + \kappa \le 1$, we have
\begin{equ}[e:bounddeltap]
|\delta p_t^\eps(x) - \delta p_t^\eps(y)| \le C |x-y|^{2\alpha} \eps^{2\kappa} t^{-{1\over 2}-\alpha - \kappa}\;.
\end{equ}
It then follows from \eref{e:prodfY} and \eref{e:bounddeltap} that for $w_s = v_s + U_s \in \CC^{2\alpha}$, we have
\begin{equ}
\int_0^{2\pi} \delta p_{t-s}^\eps(x-y) g\bigl(w_s + \Psi_s(y)\bigr)\,d\Psi_s(y) \le C \eps^{2\kappa} (t-s)^{-{1\over 2}-\alpha - \kappa} (1+\|w_s\|_{\CC^{2\alpha}}) \;,
\end{equ}
where $C$ depends on the size of $\Psi$. Since $\alpha < {1\over 2}$ and the $2\alpha$-H\"older norm of $w_s = v_s + U_s$ behaves like
$s^{-\alpha + {\beta \over 2}}$, the right hand side of the above expression is integrable for every fixed $T>0$, provided that $\kappa$
is made sufficiently small.
(However the value of the integral diverges in general as $T\to 0$!)
Combining all these bounds, we conclude that \eref{e:wanted} holds, which then implies the result.
\end{proof}

It is now rather straightforward to combine all of these ingredients in order to prove Theorem~\ref{theo:reversible}:

\begin{proof}[of Theorem~\ref{theo:reversible}]
It follows from Proposition~\ref{prop:convmu} and Skorokhod's representation theorem 
\cite{Billingsley} that one can construct a sequence of random variables $u_0^\eps$ with law $\mu_\eps$ 
and a random variable $u_0$ with law $\mu$ such that $u_0^\eps \to u_0$ in $\CC^\beta$ almost surely. 

Denoting by $u_t^\eps$ the solution to \eref{e:defueps} with initial condition $u_0^\eps$ and similarly for $u_t$, 
it then follows from Proposition~\ref{prop:convueps} that $u_t^\eps \to u_t$ almost surely for every $t \ge 0$.
Since, by Proposition~\ref{prop:revueps}, the law of $u_t^\eps$ is given by $\mu_\eps$ for every $t>0$,
we conclude from Proposition~\ref{prop:convmu} that the law of $u_t$ is given by $\mu$ for all $t$.
The reversibility of $u_t$ follows in the same way from the reversibility of the $u_t^\eps$ by considering the joint
distributions at any two times.
\end{proof}

\section{Weak convergence of approximating measures}
\label{sec:uniformExp}

The aim of this section is to prove the following uniform exponential integrability result, which is essential
for the convergence result of the previous section:

\begin{theorem}\label{theo:uniformExp}
Let $G \colon \R^n \to \R^n$ be a $\CC^3$ function which is bounded, with bounded first and second derivatives. Then,
\begin{equ}
\sup_{\eps < 1} \E_\eps \exp\Bigl(\int_0^{2\pi} G(W)\,\dot W\,dt\Bigr) < \infty\;,
\end{equ}
where $\E_\eps$ is a shorthand notation for the expectation with respect to the Gaussian
measure $\nu_\eps$ with covariance given by \eref{e:covnueps}.
\end{theorem}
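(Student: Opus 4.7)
The strategy is to realise $\nu_\eps$ as a Gaussian Markov law in the spatial variable $x$, conditioned to be periodic, and then reduce the estimate to a classical Novikov-type bound that exploits the boundedness of $G$.

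\emph{Step 1 (Markov representation).} For $\eps = 0$, the Gaussian measure on the line with covariance $(I-\d_x^2)^{-1}$ is the stationary law of the OU process $dW = -W\,dx + \sqrt 2\,dB_x$. The measure $\nu$ on the circle is this stationary law conditioned on $W(2\pi) = W(0)$. For $\eps>0$ the analogous construction is a second-order linear Gaussian diffusion in which $(W,\d_x W)$ is Markov, with noise driven by a Brownian motion $B$ and periodic conditioning on the joint boundary data. Denote by $\tilde\nu_\eps$ the corresponding unconditioned measure.

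\emph{Step 2 (It\^o/Stratonovich decomposition and Novikov).} Under $\tilde\nu_0$, using $dW = -W\,dx + \sqrt 2\,dB$ and the Stratonovich--It\^o conversion with cross-variation $d\scal{G(W),B} = \sqrt 2\,DG(W)\,dx$, we write
\begin{equ}
\int_0^{2\pi}\!G(W)\dot W\,dx \;=\; -\int_0^{2\pi}\!G(W)\cdot W\,dx \;+\; \sqrt 2\!\int_0^{2\pi}\!G(W)\,dB_x \;+\; \int_0^{2\pi}\!\diag(DG)(W)\,dx.
\end{equ}
Since $G$ is bounded, the quadratic variation of the It\^o integral is deterministically bounded by $4\pi\|G\|_\infty^2$, so the exponential martingale identity gives
\begin{equ}
\E_{\tilde\nu_0}\exp\bigl(\lambda\sqrt 2 \textstyle\int G(W)\,dB\bigr) \le \exp\bigl(2\lambda^2\pi\|G\|_\infty^2\bigr)\;,
\end{equ}
for every $\lambda$. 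The drift term is bounded pointwise by $\|G\|_\infty\|W\|_{\L^1}$, which admits uniform exponential moments under $\tilde\nu_0$ by Fernique, and the It\^o correction is deterministically bounded by $2\pi\|DG\|_\infty$. For $\eps>0$, the integral is a genuine Riemann integral; one performs the analogous decomposition using the second-order SDE for $(W,\d_x W)$, and the spatial-integral drift and It\^o-correction terms remain pointwise controlled by $\|G\|_\infty$, $\|DG\|_\infty$, and path norms of $W$ that are uniformly Gaussian under $\tilde\nu_\eps$ thanks to the uniform bound on the covariance kernel $K_\eps$.

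\emph{Step 3 (Periodic conditioning).} The measure $\nu_\eps$ is obtained from $\tilde\nu_\eps$ by Gibbs-weighting with the conditional density of the boundary-data vector at the periodicity event (namely $W(2\pi)-W(0)$, augmented by $\d_x W(2\pi)-\d_x W(0)$ for $\eps>0$). This density is Gaussian with covariance bounded above and below (after renormalisation), and is independent of the bulk of the path; hence H\"older's inequality gives
\begin{equ}
\E_{\nu_\eps}\exp\bigl(F(W)\bigr) \le C\,\bigl(\E_{\tilde\nu_\eps}\exp\bigl(pF(W)\bigr)\bigr)^{1/p}\;,
\end{equ}
for some $p>1$ and $C$ uniform in $\eps$, which combined with Step 2 yields the claim.

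\emph{Main obstacle.} The delicate point is Step 3 in the regime $\eps>0$: the variance of $\d_x W$ under $\tilde\nu_\eps$ diverges logarithmically as $\eps \to 0$, so one must renormalise the boundary-data vector (working for instance with $(W, \sqrt\eps\,\d_x W)$) and check that its covariance matrix remains uniformly non-degenerate. A cleaner route, which avoids the second-order Markov representation entirely, is to use that $\nu_\eps$ restricted to any arc of length strictly less than $2\pi$ is equivalent to the OU-bridge law with a Radon--Nikodym derivative that is bounded uniformly in $\eps$ (by inspecting the Fourier spectrum of $I - \d_x^2 + \eps^2\d_x^4$); one then splits $[0,2\pi]$ into two overlapping sub-arcs and applies the Step 2 bound to each, which bypasses the periodic conditioning problem altogether.
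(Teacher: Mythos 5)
Your skeleton (pass to a tractable reference measure, prove exponential integrability there, come back by H\"older) is in fact the same as the paper's, and your Step~2 is fine at $\eps=0$. But the two pillars that would make it work uniformly in $\eps$ are missing or incorrect. First, Step~2 for $\eps>0$ is pure assertion, and it is exactly where the difficulty sits: under $\nu_\eps$ the path is $C^1$, $\dot W=Z^\eps$ is a Gaussian process whose pointwise variance is of order $1/\eps$ (not logarithmic, as you state in your final paragraph), and $\int_0^{2\pi}G(W)\,\dot W\,dx$ is a Riemann integral, not an It\^o integral with bounded integrand. Rewriting it against the Brownian motion driving the second-order Markov representation (say via $Z\,dx=-\eps\,dZ+dB$, or via a martingale representation) produces terms such as $\eps\int DG(W)\,Z\otimes Z\,dx$, boundary terms involving $\eps Z$, and conditional-expectation integrands whose uniform-in-$\eps$ exponential integrability is precisely what must be proved; your claim that everything ``remains pointwise controlled by $\|G\|_\infty$, $\|DG\|_\infty$ and path norms of $W$'' begs the question. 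In the paper this step is Proposition~\ref{prop:unexpmu}, proved through the Clark--Ocone formula with a careful decomposition of $\E(\D_s H_\eps\mid\F_s)$ and bounds like $M_\eps(t)\le C/t$ and the $\sqrt{T_\eps}$ smoothing of $D^2G$; none of that is routine.

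Second, Step~3 is wrong as stated. The periodic law $\nu_\eps$ arises from conditioning the Markov law on a measure-zero event (matching of $W$ and $\partial_x W$ at the endpoints), so it is mutually singular with the unconditioned law $\tilde\nu_\eps$: there is no Radon--Nikodym factor ``independent of the bulk of the path'', and H\"older cannot be applied in the way you indicate. Your proposed ``cleaner route'' also rests on a false claim: for fixed $\eps>0$, $\nu_\eps$ charges $C^1$ paths and is therefore singular with respect to the OU-bridge law on every arc, so no density exists, bounded or otherwise (the equivalence-on-subintervals statement in the paper concerns $\eps=0$ only). The correct comparison is with a reference measure smoothed at the same scale $\eps$ --- the paper's $\bar\nu_\eps$, built from an integrated Ornstein--Uhlenbeck process with timescale $\eps$ reflected at $\pi$ --- and even then one only obtains (and only needs) a uniform bound on $\int(d\nu_\eps/d\bar\nu_\eps)^\alpha\,d\bar\nu_\eps$ for some $\alpha>1$, never a bounded density. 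Establishing that uniformity (Feldman--H\'ajek plus the Hilbert--Schmidt convergence of $\Lambda_\eps$ in Lemma~\ref{lem:convHS} and Corollary~\ref{cor:convHS}) together with the exponential bound under $\bar\nu_\eps$ constitutes essentially the whole proof, and both ingredients are absent from your proposal.
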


\begin{proof}
Since $\nu_\eps$ charges the
set of $\CC^1$ functions for every $\eps > 0$, the quantity under the expectation is 
defined for $\nu_\eps$-almost every $W$. Furthermore, the argument of the exponential is bounded by
$C \|\dot W\|_{\L^\infty}$ for some $C>0$, so that it follows from Fernique's theorem that 
\begin{equ}
\sup_{\eps \in [\eps_0,1]} \E_\eps \exp\Bigl(\int_0^{2\pi} G(W)\,\dot W\,dt\Bigr) < \infty\;,
\end{equ}
for every $\eps_0 > 0$, implying that it suffices to obtain a uniform bound for small values of $\eps$.

Note now that it follows from Cauchy-Schwarz and the translation invariance of $\nu_\eps$
that 
\begin{equ}
\E_\eps \exp\Bigl(\int_0^{2\pi} G(W)\,\dot W\,dt\Bigr) \le \E_\eps \exp\Bigl(2\int_0^{\pi} G(W)\,\dot W\,dt\Bigr)  < \infty\;,
\end{equ}
so that it suffices to bound exponential moments of the integral up to time $\pi$.

Our proof then proceeds by constructing a sequence of measures $\bar \nu_\eps$ that are equivalent to $\nu_\eps$
and such that there exists
$\alpha > 0$ and $\eps_0>0$ with 
\begin{equ}[e:boundder]
\sup_{\eps < \eps_0} \int \Bigl({d \nu_\eps \over d \bar \nu_\eps}\Bigr)^\alpha\,d \bar \nu_\eps < \infty\;.
\end{equ}
These measures will furthermore have the property that 
\begin{equ}[e:boundexp]
\sup_{\eps < \eps_0} \int \exp\Bigl(\int_0^{\pi} G(W)\,\dot W\,dt\Bigr)\,\bar \nu_\eps(dW) < \infty\;,
\end{equ}
for every function $G$ as in the statement of the result. 
The construction of $\bar \nu_\eps$ together with the uniform bound \eref{e:boundder} is the content of Lemma~\ref{lem:convHS} and Corollary~\ref{cor:convHS} below.
The uniform exponential integrability with respect to $\bar \nu_\eps$, namely \eref{e:boundexp}, is the content of Proposition~\ref{prop:unexpmu}.

Setting $\CE(W) =  \exp\bigl(\int_0^{\pi} G(W)\,\dot W\,dt\bigr)$, we now use H\"older's inequality to write
\begin{equs}
\int \CE(W)\,\nu_\eps(dW)  &= \int \CE(W) \Bigl({d \nu_\eps \over d \bar \nu_\eps}\Bigr)(W)\,\bar \nu_\eps(dW) \\
&\le \Bigl(\int \Bigl({d \nu_\eps \over d \bar \nu_\eps}\Bigr)^\alpha\,d \bar \nu_\eps\Bigr)^{1/\alpha} \Bigl(\int \CE^p(W)\,\bar \nu_\eps(dW) \Bigr)^{1/p}\;,
\end{equs}
where $p$ is the exponent conjugate to $\alpha$. The claim then follows from \eref{e:boundder} and \eref{e:boundexp} 
by noting that $\CE^p$ is again of the same form as $\CE$. The remainder of this section is devoted to the construction of
$\bar \nu_\eps$ and to the proof that \eref{e:boundder} and \eref{e:boundexp} do indeed hold.
\end{proof}

\begin{remark}
Retracing the steps of the proof, it is not difficult to check that Theorem~\ref{theo:uniformExp} still holds
if, in addition to having bounded first and second derivatives, $G$ is only assumed to have sublinear growth, 
namely $|G(u)| \le C(1+|u|)^\alpha$ for some $\alpha < 1$.
\end{remark}

\subsection{Construction of $\bar \nu_\eps$}

Essentially, we will construct $\bar \nu_\eps$ as the law of the integral of an Ornstein-Uhlenbeck process with timescale 
$\eps$, which is reflected in a suitable way around $t = \pi$. Once we know that two Gaussian measures are mutually equivalent, 
it is easy to show that a bound of the type \eref{e:boundder} holds for some $\alpha > 1$. The difficult part is to show that 
it holds uniformly in $\eps$ with the same value $\alpha$. Our main tool in this endeavour is the following standard result
from Gaussian measure theory:

\begin{proposition}\label{prop:densGauss}
Let $\bar \nu$ be a centred Gaussian measure on a separable Banach space $\CB$ with covariance operator 
$\bar C\colon \CB^* \to \CB$ and let $\nu$ be a centred Gaussian measure on $\CB$ with covariance operator $C$.
We assume that both measures have full support, so that the range of both $C$ and $\bar C$ is dense in $\CB$.

Let $\CH$ be a separable Hilbert space and let $\CA\colon \CB \to \CH$ be an unbounded operator
such that $C^{-1} = \CA^*\CA$. 
Then, $\bar \nu$ and $\nu$ are equivalent if and only if the operator
$\Lambda = 1 - \CA\bar C \CA^*$ is Hilbert-Schmidt as an operator from $\CH$ to $\CH$
and has no eigenvalue $1$.

In that case, denoting by $\{\lambda_n\}_{n \ge 0}$ the eigenvalues of $\Lambda$, one has the identity
\begin{equ}[e:dmu]
\int_\CB \Bigl({d \nu \over d\bar \nu}\Bigr)^\alpha\,d\bar \nu = \prod_{n \ge 0} {(1-\lambda_n)^{\alpha \over 2}\over \sqrt {1-\alpha \lambda_n}}\;,
\end{equ}
for all values $\alpha > 1$ such $\lambda_n \le 1/\alpha$ for all $n$.
\end{proposition}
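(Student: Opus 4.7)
The plan is to reduce the proposition to the classical Feldman--H\'ajek dichotomy on Gaussian measures by using $\CA$ to \emph{whiten} the reference measure $\nu$. The formal random variable $\xi = \CA W$ has covariance $\CA C \CA^* = \CA(\CA^*\CA)^{-1}\CA^* = 1$ under $\nu$ and $\CA \bar C\CA^* = 1-\Lambda$ under $\bar\nu$, so the problem becomes that of comparing two cylindrical centred Gaussians on $\CH$ whose covariances differ by $\Lambda$. Since $\CA$ is unbounded, the coordinates $\xi_n = \scal{e_n,\CA W}$ for an orthonormal basis $\{e_n\}$ of $\CH$ must be made rigorous as $\L^2(\nu)$-limits of $\scal{\CA^* e_n^{(k)},W}$ along approximations $e_n^{(k)}$ of $e_n$; the full-support hypothesis on $\nu$ ensures that the range of $\CA^*$ is dense in $\CB^*$ and that the $\sigma$-algebra generated by $\{\xi_n\}$ agrees, modulo null sets for either measure, with the Borel $\sigma$-algebra on $\CB$.

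The next step is to diagonalize $\Lambda$. Symmetry and positivity of $\bar C$ make $\CA\bar C\CA^*$ self-adjoint, so $\Lambda$ is self-adjoint, and under the Hilbert--Schmidt assumption it is compact; fix an orthonormal eigenbasis $\{e_n\}$ with $\Lambda e_n = \lambda_n e_n$. With this choice, $(\xi_n)$ is i.i.d.\ $\mathcal N(0,1)$ under $\nu$, while the diagonalization forces $(\xi_n)$ to be an \emph{independent} sequence with $\xi_n \sim \mathcal N(0,1-\lambda_n)$ under $\bar\nu$. Equivalence of $\nu$ and $\bar\nu$ thus reduces to equivalence of two product Gaussian measures on $\R^{\mathbb N}$, which by Kakutani's theorem is either an equivalence or a mutual singularity, the first case holding iff the sum of the one-dimensional squared Hellinger distances converges. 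A direct one-variable Gaussian computation shows that those are comparable to $\lambda_n^2$ for small $|\lambda_n|$, so the criterion is exactly that $\Lambda$ be Hilbert--Schmidt. The hypothesis $\lambda_n \neq 1$ is needed because such a value would make the $n$-th marginal of $\bar\nu$ a Dirac mass at $0$, automatically singular against the $\mathcal N(0,1)$ marginal of $\nu$.

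For the quantitative identity \eref{e:dmu}, work on the finite-dimensional subspace spanned by the first $N$ eigenvectors, where the Radon--Nikodym derivative is a ratio of explicit Gaussian densities:
\begin{equ}
\frac{d\nu}{d\bar\nu}\Bigr|_N = \prod_{n=1}^N \sqrt{1-\lambda_n}\,\exp\Bigl(\frac{\lambda_n \xi_n^2}{2(1-\lambda_n)}\Bigr)\;.
\end{equ}
Raising to the $\alpha$-th power and integrating against $\bar\nu$ factorizes by the $\bar\nu$-independence of the $\xi_n$, and each factor is an elementary one-variable Gaussian integral---convergent exactly when $\alpha\lambda_n < 1$, as ensured by $\lambda_n \le 1/\alpha$---which evaluates to $(1-\lambda_n)^{\alpha/2}(1-\alpha\lambda_n)^{-1/2}$. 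Letting $N \to \infty$ by monotone convergence then yields \eref{e:dmu}, the infinite product converging automatically since its general term is $1+O(\lambda_n^2)$ and $\Lambda$ is Hilbert--Schmidt. The main obstacle of the whole argument lies in the first paragraph, namely the rigorous interpretation of $\xi = \CA W$ as a cylindrical random element and the identification of the $\sigma$-algebras generated by its coordinates; once the problem is transported to the coordinates $(\xi_n)$, everything else is a spectral reduction to Kakutani's dichotomy plus a routine Gaussian calculation.
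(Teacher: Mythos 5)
Your treatment of the identity \eref{e:dmu} follows essentially the paper's own route: the whitening by $\CA$ is exactly the paper's ``reduce to the case $C=1$ by a change of coordinates'', the per-eigenmode Gaussian integral is the finite-dimensional computation, and the passage $N\to\infty$ is the paper's ``approximation'' step. Two small points there: the limit is not an application of the monotone convergence theorem to the integrands (the truncated densities $D_N^\alpha$ are not pointwise monotone); rather $D_N$ is a $\bar\nu$-martingale, so conditional Jensen gives monotonicity of $\int D_N^\alpha\,d\bar\nu$ and the upper bound by $\int D^\alpha\,d\bar\nu$, while Fatou gives the matching lower bound. Also, $\CA C\CA^*=1$ requires the range of $\CA$ to be dense in $\CH$ (otherwise it is only a projection, and $\Lambda$ would automatically have eigenvalue $1$); this is implicit in the hypotheses but should be said. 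For the dichotomy itself the paper simply cites the Feldman--H\'ajek theorem, whereas you re-derive it via Kakutani; for the ``if'' direction this is a legitimate and more self-contained argument, modulo the $\sigma$-algebra identification that you flag but do not carry out.

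The genuine gap is in the ``only if'' direction. Your reduction to a product measure hinges on fixing an orthonormal eigenbasis of $\Lambda$, and you obtain such a basis from compactness, which you in turn get from the Hilbert--Schmidt hypothesis. But in the ``only if'' direction the Hilbert--Schmidt property (indeed, even the boundedness of the quadratic form of $\CA\bar C\CA^*$) is the conclusion, not an assumption: knowing only $\nu\sim\bar\nu$, the operator $\Lambda$ need not a priori be compact or have a point spectrum, so the diagonalisation --- and hence the appeal to Kakutani's theorem for product measures --- is not available, and the argument becomes circular. As written, you prove ``$\Lambda$ Hilbert--Schmidt with no eigenvalue $1$ $\Rightarrow$ equivalence'' together with \eref{e:dmu}, but not ``equivalence $\Rightarrow$ $\Lambda$ Hilbert--Schmidt''. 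To close this you must either cite the Feldman--H\'ajek theorem for that implication (which is what the paper does) or give a separate argument, e.g.\ first showing that equivalence forces $\CA\bar C\CA^*$ to be bounded with bounded inverse (otherwise suitable finite-dimensional marginals already have Hellinger affinity tending to zero), and then using the spectral theorem with finite-rank spectral projections to show that failure of the Hilbert--Schmidt property makes the product of finite-dimensional Hellinger affinities vanish, contradicting equivalence.
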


\begin{proof}
The first statement is the content of the Feldman-H\'ajek theorem. The identity \eref{e:dmu} is straightforward to check in the case
$\CB = \R^n$ by using the fact that the left hand side is invariant under changes of coordinates, so that we can reduce ourselves
to the case $C = 1$. The general case then follows by approximation. %\comment{Reference: Hida? Kuo? Bogachev?}
\end{proof}

\begin{remark}
A canonical choice of $\CA$ and $\CH$ is to take for $\CH$ the Cameron-Martin space $\CH_\nu$ of $\nu$
and for $\CA$ the restriction operator (with domain $\CH_\nu \in \CB$). This operator can however be multiplied 
from the left by an arbitrary unitary operator without changing the statement, a fact that we will use
in the sequel.
\end{remark}

An important remark is that the right hand side of \eref{e:dmu} is continuous in the Hilbert-Schmidt topology on the 
set of operators $\Lambda$ for which the expression makes sense. As a consequence, we have the following:

\begin{corollary}\label{cor:convHS}
Let $\nu_\eps$ and $\bar \nu_\eps$ be a sequence of centred Gaussian measures
with covariances $C_\eps$ and $\bar C_\eps$ such that $C_\eps^{-1} = \CA_\eps \CA_\eps^*$ and such that the 
range of $\CA_\eps$ is some common Hilbert space $\CH$. 
Let $\Lambda_\eps = 1 - \CA_\eps \bar C_\eps \CA_\eps^*$ as before and assume that $\Lambda_\eps \to \Lambda$ in the 
Hilbert-Schmidt operator topology, where $\Lambda$ is a Hilbert-Schmidt operator with $\lambda_n < 1$ for all $n$.

Then, there exist $\alpha > 1$ and $\eps_0 > 0$ such that 
$\sup_{\eps < \eps_0} \int_\CB \bigl({d \nu_\eps \over d\bar \nu_\eps}\bigr)^\alpha\,d\bar \nu_\eps < \infty$. 
\end{corollary}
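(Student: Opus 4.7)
The plan is to apply Proposition~\ref{prop:densGauss} and then show that the resulting infinite product is uniformly bounded in $\eps$. The Hilbert--Schmidt convergence $\Lambda_\eps \to \Lambda$ in particular forces each $\Lambda_\eps$ to be Hilbert--Schmidt, so by that proposition $\nu_\eps$ and $\bar \nu_\eps$ are equivalent and
\begin{equ}
\int_\CB \Bigl({d \nu_\eps \over d\bar \nu_\eps}\Bigr)^\alpha\,d\bar \nu_\eps = \prod_n {(1-\lambda_n^\eps)^{\alpha/2}\over \sqrt{1-\alpha \lambda_n^\eps}}\;,
\end{equ}
for any $\alpha > 1$ with $\alpha \lambda_n^\eps < 1$ for every $n$, where $\{\lambda_n^\eps\}$ denote the eigenvalues of $\Lambda_\eps$. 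Taking logs, it is enough to find $\alpha > 1$ and $\eps_0 > 0$ such that $\sum_n \phi(\lambda_n^\eps)$ is bounded uniformly for $\eps < \eps_0$, where $\phi(\lambda) = {\alpha\over 2}\log(1-\lambda) - {1\over 2}\log(1-\alpha \lambda)$.

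For the choice of $\alpha$, note that $\Lambda$ is self-adjoint and compact, so its eigenvalues are real, accumulate only at $0$, and satisfy $\lambda_n < 1$ by hypothesis; hence $\lambda^* := \sup_n \lambda_n$ is attained and strictly less than $1$. I pick $\alpha > 1$ together with $\eta > 0$ so that $\alpha(\lambda^* + \eta) < 1$. Weyl's inequality for self-adjoint operators gives the uniform bound $\sup_n |\lambda_n^\eps - \lambda_n| \le \|\Lambda_\eps - \Lambda\|_{\mathrm{op}} \le \|\Lambda_\eps - \Lambda\|_{\mathrm{HS}}$, so for $\eps$ less than some $\eps_0$ one has $\sup_n \lambda_n^\eps \le \lambda^* + \eta$, and therefore $\alpha \lambda_n^\eps < 1$ uniformly in $n$ and $\eps$, validating the product formula.

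To bound $\sum_n \phi(\lambda_n^\eps)$, I would use the two asymptotics
\begin{equ}
\phi(\lambda) = {\alpha(\alpha-1)\over 4}\lambda^2 + O(\lambda^3) \quad (\lambda \to 0)\;,\qquad \phi(\lambda) = {\alpha-1\over 2}\log|\lambda| + O(1)\quad (\lambda \to -\infty)\;,
\end{equ}
together with the fact that $\phi$ is continuous on $(-\infty, \lambda^* + \eta]$ (which avoids the singularity at $1/\alpha$). These imply that $\phi(\lambda)/\lambda^2$, extended continuously by its limit at $0$, is a bounded continuous function on $(-\infty, \lambda^*+\eta]$. Hence there exists a constant $C$ such that $\phi(\lambda) \le C\lambda^2$ for all $\lambda \le \lambda^* + \eta$, which gives
\begin{equ}
\sum_n \phi(\lambda_n^\eps) \le C \sum_n (\lambda_n^\eps)^2 = C\|\Lambda_\eps\|_{\mathrm{HS}}^2\;,
\end{equ}
and the right-hand side is uniformly bounded because $\Lambda_\eps \to \Lambda$ in Hilbert--Schmidt norm. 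Exponentiating yields the desired uniform bound.

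The main obstacle I anticipate is the behaviour of $\phi$ at $-\infty$: unlike the near-zero regime, $\phi$ is not bounded there but grows logarithmically, so a naive supremum bound on $\phi$ over the admissible range of eigenvalues fails. It is precisely the comparison $\log|\lambda| \ll \lambda^2$ at infinity, combined with the fact that the Hilbert--Schmidt norm controls $\sum_n (\lambda_n^\eps)^2$ uniformly in $\eps$, that rescues the argument.
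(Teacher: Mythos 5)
Your proof is correct and follows essentially the same route as the paper: apply Proposition~\ref{prop:densGauss} to each pair $(\nu_\eps,\bar\nu_\eps)$, fix $\alpha>1$ using $\sup_n\lambda_n<1$, and control the resulting product uniformly via the Hilbert--Schmidt convergence $\Lambda_\eps\to\Lambda$. The only difference is one of detail: where the paper simply invokes the continuity of the right-hand side of \eref{e:dmu} in the Hilbert--Schmidt topology, you make this quantitative through the elementary bound $\phi(\lambda)\le C\lambda^2$ on $(-\infty,\lambda^*+\eta]$ summed against $\|\Lambda_\eps\|_{2}^2$, which in effect supplies a proof of that continuity remark.
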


\begin{proof}
Set $1/\alpha = (1+\sup_n \lambda_n)/2 < 1$, so that the right hand side of \eref{e:dmu} is finite. The claim then follows from the
continuity of that expression in the Hilbert-Schmidt operator topology.
\end{proof}

A final remark is that as an immediate consequence of Proposition~\ref{prop:densGauss}, one has the following result,
where we identify $\CH$ with its dual in the usual way in order to consider the covariance operators as self-adjoint Hilbert-Schmidt 
operators on $\CH$:

\begin{corollary}\label{cor:diag}
Let $\nu$ and $\bar \nu$ be two centred Gaussian measures with covariance operators $C$ and $\bar C$
on a common Hilbert space $\CH$. If $C$ and $\bar C$ are simultaneously diagonalisable
with respective eigenvalues $a_n>0$ and $\bar a_n>0$, then $\nu \sim \bar \nu$ provided that
$\sum_{n \ge 1} (1- {a_n \over  \bar a_n}) < \infty$. \qed
\end{corollary}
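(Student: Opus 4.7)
The plan is to apply Proposition~\ref{prop:densGauss} directly. Because the relation $\nu\sim\bar\nu$ is symmetric, I am free to swap the roles of the two measures in the proposition; this is convenient because it lets me choose $\CA$ in terms of the covariance whose inverse square root is easiest to work with. Concretely, I take $\bar\nu$ to play the role of the ``target'' measure and $\nu$ to play the role of the reference, so that $\CA$ must satisfy $\CA^*\CA = \bar C^{-1}$.

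Let $\{e_n\}_{n\ge 1}$ be a common orthonormal eigenbasis of $C$ and $\bar C$, which exists by hypothesis. I choose $\CA = \bar C^{-1/2}$, the unbounded self-adjoint operator on $\CH$ with eigenvalues $\bar a_n^{-1/2}$; this is well-defined since $\bar a_n > 0$, and it satisfies $\CA^*\CA = \bar C^{-1}$. With this choice,
\begin{equ}
\Lambda := 1 - \CA\, C\, \CA^* = 1 - \bar C^{-1/2}\, C\, \bar C^{-1/2}
\end{equ}
is diagonal in the basis $\{e_n\}$ with eigenvalues $\lambda_n = 1 - a_n/\bar a_n$. The condition $a_n > 0$ immediately excludes the forbidden eigenvalue $\lambda_n = 1$, so the only remaining task is to verify that $\Lambda$ is Hilbert-Schmidt, namely $\sum_n \lambda_n^2 < \infty$.

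This is where the hypothesis enters. The summability assumption $\sum_n (1 - a_n/\bar a_n) < \infty$ is to be read as absolute convergence, which is the natural interpretation for a series of signed real numbers without a priori sign control. From it one deduces $\lambda_n \to 0$, hence $|\lambda_n| \le 1$ for $n$ large, so that $\lambda_n^2 \le |\lambda_n|$ for such $n$, yielding $\sum_n \lambda_n^2 < \infty$. Proposition~\ref{prop:densGauss} then gives $\bar\nu \sim \nu$, which is the claim.

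The only mildly subtle point is the interpretation of the summability hypothesis and the comparison to the square-summability that the Feldman--H\'ajek criterion actually requires; once $\{e_n\}$ is fixed, every other step is an elementary computation in the common eigenbasis, so I do not anticipate any substantive obstacle.
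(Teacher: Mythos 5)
Your argument is exactly the one the paper intends (the corollary is stated there as an immediate consequence of Proposition~\ref{prop:densGauss}): choose $\CA$ as an inverse square root of one of the covariances, observe that $\Lambda$ is diagonal in the common eigenbasis with eigenvalues $1-a_n/\bar a_n$, rule out the eigenvalue $1$ by positivity, and deduce the Hilbert--Schmidt property from the summability hypothesis. Your reading of that hypothesis as absolute summability is the correct one (conditional convergence would not suffice, as Feldman--H\'ajek genuinely needs $\sum_n\lambda_n^2<\infty$), and swapping the roles of $\nu$ and $\bar\nu$ is legitimate since equivalence is symmetric; there is nothing to add.
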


Before we proceed with the construction of the sequence of measures $\bar \nu_\eps$, we construct their limit $\bar \nu$,
which is equivalent to the measure $\nu$ given by \eref{e:covnu}.

\begin{lemma}
Let $\nu$ be the Gaussian measure on $\L^2(S^1,\R)$ with covariance given by $(1 - \d_t^2)^{-1}$, 
and let $\bar \nu$ be the Gaussian measure with covariance 
\begin{equ}[e:defCbar]
\bar C(s,t) = 
\left\{\begin{array}{cl}
	1 + (s\wedge t) & \text{if $s\le \pi$ and $t \le \pi$,} \\
	1 + 2\pi - (s\vee t) & \text{if $s\ge \pi$ and $t \ge \pi$,} \\
	1 + 2(s\wedge t) - {st \over \pi} & \text{otherwise.} 
\end{array}\right.
\end{equ}
Then, we have $\bar \nu \sim \nu$.
\end{lemma}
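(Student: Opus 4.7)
The plan is to apply Proposition~\ref{prop:densGauss} with $\CH = \L^2(S^1,\R)$ and $\CA = (1-\d_t^2)^{1/2}$ realised as the positive square root of the periodic Laplacian, so that $\CA^* \CA = 1-\d_t^2 = C^{-1}$. Since $\bar C$ is the positive-definite covariance of a non-degenerate Gaussian measure with dense range, the operator $\CA \bar C \CA^*$ is strictly positive, so $\Lambda := 1 - \CA \bar C \CA^*$ cannot have eigenvalue $1$. Hence the equivalence $\bar \nu \sim \nu$ reduces to proving that $\Lambda$ is Hilbert-Schmidt on $\L^2(S^1)$.

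Writing $\Lambda = \CA(C-\bar C)\CA^*$ and expanding in the Fourier basis of $S^1 \times S^1$, Parseval reduces the Hilbert-Schmidt condition to
\begin{equ}
\int_{S^1\times S^1}\Bigl(|D|^2 + |\d_s D|^2 + |\d_t D|^2 + |\d_s\d_t D|^2\Bigr)\,ds\,dt < \infty\;,
\end{equ}
where $D(s,t) := K(s-t) - \bar C(s,t)$. I will prove the stronger statement that all four functions are bounded on $S^1 \times S^1$, which trivially implies the above. The essential input is the cancellation of the diagonal cusp singularities: a Taylor expansion gives $K(u) = {1\over 2}\coth\pi - {1\over 2}|u| + O(u^2)$ near $u = 0$, while $\bar C(s,t) = 1 + {s+t\over 2} - {|s-t|\over 2}$ on $[0,\pi]^2$ and analogously on $[\pi,2\pi]^2$ via $s\vee t = {s+t\over 2}+{|s-t|\over 2}$. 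The $-{1\over 2}|s-t|$ coefficients match, so the diagonal cusps cancel in $D$. Equivalently, the $\delta(s-t)$ contributions appearing in $-\d_t^2 K(s-t)$ and in $-\d_t^2 \bar C(s,t)$ have the same coefficient, so they also cancel in $\d_s\d_t D$.

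Away from the diagonal, $K(s-t)$ is $\CC^\infty$ and $\bar C$ is $\CC^\infty$ on each of the four open sub-rectangles of $[0,2\pi]^2$ cut out by the seams $\{s=\pi\}$ and $\{t=\pi\}$; a direct piecewise calculation also shows continuity of $\bar C$ across the identified seam $\{0\}\sim\{2\pi\}$. One verifies analogously that $\d_s\bar C$ and $\d_t\bar C$ are bounded functions with jumps across the seams, and that these jumps contribute $\delta$-distributions \emph{only} to the unmixed second derivatives $\d_s^2 \bar C$ and $\d_t^2 \bar C$, never to the mixed derivative $\d_s\d_t \bar C$, which remains bounded. Combining with the diagonal cancellation of the previous paragraph gives $D, \d_s D, \d_t D, \d_s\d_t D \in \L^\infty(S^1\times S^1)$, which completes the argument. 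The main obstacle is the case-by-case bookkeeping at the seams $\{s,t\}\cap\{0,\pi\}\neq \emptyset$: one has to check, sub-rectangle by sub-rectangle, that the traces of $\bar C$ agree across each seam so that $D$ is continuous there, and crucially that no $\delta\otimes\delta$ singularity survives in $\d_s\d_t \bar C$ at the four corners $(s_0,t_0)\in\{0,\pi\}^2$. These verifications are elementary but require tracking each sub-rectangle and corner separately.
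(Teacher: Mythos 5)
Your route is genuinely different from the paper's, and its analytic core checks out. The paper deliberately avoids any direct spectral analysis of $\Lambda = 1-\CA\bar C\CA^*$ for the pair $(\nu,\bar\nu)$ itself: since that operator has eigenvalues close to $-1$, the cheap criterion $\|\Lambda\|<1$ (which would give both the Hilbert--Schmidt property and the absence of eigenvalue $1$ in one stroke) is unavailable, so the paper replaces $1-\d_t^2$ by $\kappa^2-\d_t^2$ and $\bar C$ by $\bar C_\kappa$, shows $\Lambda_\kappa = \Lambda_0 + \CO(\kappa)$ with $\Lambda_0$ an explicit negative definite kernel, and then chains $\nu\sim\eta_\kappa\sim\bar\eta_\kappa\sim\bar\nu$ via Corollary~\ref{cor:diag} and a rank-one perturbation. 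You instead verify the Hilbert--Schmidt condition head-on: the Parseval identification of the HS norm of $\CA(C-\bar C)\CA^*$ with the mixed Sobolev norm of $D(s,t)=K(s-t)-\bar C(s,t)$ is correct, the cancellation of the $-\frac12|s-t|$ cusps (equivalently, of the $\delta(s-t)$ contributions to $\d_s\d_t D$) is exactly the right key point, and your asserted seam bookkeeping is in fact true: $\d_t\bar C$ is continuous across $\{s=\pi\}$ and across $\{s=0\}\sim\{s=2\pi\}$ (and symmetrically in $s\leftrightarrow t$), so the derivative jumps feed only into $\d_s^2\bar C$ and $\d_t^2\bar C$, never into $\d_s\d_t\bar C$, and no $\delta\otimes\delta$ terms arise because $\bar C$ is continuous. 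This yields a shorter, more self-contained proof of the HS part than the paper's detour through $\bar C_\kappa$.

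The one genuine gap is your treatment of the eigenvalue-$1$ condition. You argue that non-degeneracy of $\bar\nu$ (dense range of $\bar C$) makes $\CA\bar C\CA^*$ strictly positive. That implication is not valid in general: injectivity of $\bar C$ on $\L^2$ only controls the quadratic form $u\mapsto\scal{\bar C u,u}$ for $u\in\L^2$, whereas an eigenvector $h$ of $\Lambda$ with eigenvalue $1$ corresponds to $u=\CA^* h \in H^{-1}$, so what is needed is injectivity of $\bar C$ on $\CA^*(\L^2)=H^{-1}$. One can perfectly well have $\bar C$ injective on $\L^2$ while $\CA\bar C\CA^*$ has a kernel (in which case Proposition~\ref{prop:densGauss} gives singularity, not equivalence), so this step cannot be waved through. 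In the present case the conclusion is true and admits a short proof: if $\bar C u=0$ with $u=\CA^*h\in H^{-1}$, apply $-\d_s^2$ to $(\bar Cu)(s)$ separately on $(0,\pi)$ and $(\pi,2\pi)$; since on each of these intervals $-\d_s^2\bar C(s,t)=\delta(s-t)$ for $t$ in the same interval and $\bar C(s,t)$ is affine in $s$ for $t$ in the opposite one, this forces $u$ to vanish on $(0,\pi)\cup(\pi,2\pi)$, hence $u=a\delta_0+b\delta_\pi$ (higher-order point masses are excluded by $u\in H^{-1}$); but then $\bar C u = a + b\bigl(1+s\wedge(2\pi-s)\bigr)$, which vanishes identically only for $a=b=0$. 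With this (or an equivalent) verification added, your proof is complete.
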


\begin{proof}
In principle, the easiest way to check that the operator $\Lambda$ satisfies the assumptions of Proposition~\ref{prop:densGauss}
is to show that it is Hilbert-Schmidt and has norm strictly less than $1$. Unfortunately, it turns out that in our case, this operator
does have negative eigenvalues close to $-1$, so we use the trick of considering instead a rank one perturbation
of $\bar C$, which does gives rise to a measure which is obviously equivalent to that given by $\bar C$.
By tuning the parameter of that perturbation, we will see that $\Lambda$ can be made arbitrarily close to an operator
which is explicitly diagonalisable.

Let $\eta_\kappa$ be the Gaussian measure on $\L^2(S^1,\R)$ with covariance given by $(\kappa^2 - \d_t^2)^{-1}$, 
and let $\bar \eta_\kappa$ be the Gaussian measure with covariance 
\begin{equ}
\bar C_\kappa(s,t) = 
\left\{\begin{array}{cl}
	\kappa^{-2} + (s\wedge t) & \text{if $s\le \pi$ and $t \le \pi$,} \\
	\kappa^{-2} + 2\pi - (s\vee t) & \text{if $s\ge \pi$ and $t \ge \pi$,} \\
	{\kappa^{-2}}  + 2(s\wedge t) - {st \over \pi} & \text{otherwise.} 
\end{array}\right.
\end{equ}
Then, we will show that $\eta_\kappa \sim \bar \eta_\kappa$ for $\kappa$ sufficiently large.
Since one obviously has $\bar \eta_\kappa \sim \eta_{\kappa'}$ for every $\kappa, \kappa' > 0$ (the two covariance operators
differ by a rank $1$ perturbation) and
since it follows easily from Corollary~\ref{cor:diag} that $\eta_\kappa \sim \eta_{\kappa'}$, the claim then follows.

Setting $\CB = \CH = \L^2(S^1,\R)$ and $\CA = \kappa - \d_t$, we see that $\CA$ has the required property since $\CA^*\CA = \kappa^2 - \d_t^2$.
Furthermore, the corresponding operator $\Lambda_\kappa = 1 - \CA \bar C_\kappa \CA^\star$ is an integral operator with kernel given in the sense of distributions by 
\begin{equ}
\Lambda_\kappa(s,t) = \delta(t-s) - (\kappa - \d_t)(\kappa - \d_s) \bar C_\kappa (s,t)\;.
\end{equ}
An explicit calculation shows that 
\begin{equ}
\Lambda_\kappa(s,t) = \Lambda_0(s,t) + \CO(\kappa)\;,
\end{equ}
where $\CO(\kappa)$ means that the remainder term is uniformly bounded by some constant times $\kappa$.
The kernel $\Lambda_0$ is given by
\begin{equ}
\Lambda_0(s,t) = 
\left\{\begin{array}{cl}
	-\pi^{-1} & \text{if $(s,t) \in [0,\pi]^2 \cup [\pi,2\pi]^2$,} \\
	0 & \text{otherwise.} 
\end{array}\right.
\end{equ}
It is straightforward to check that the operator $\Lambda_0$ (we identify an operator with 
the corresponding integral kernel) is negative definite, since it can be diagonalised by
considering linear combinations of step functions.
The claim then follows at once, since the Hilbert-Schmidt norm of the remainder term is of order $\kappa$.
\end{proof}

\begin{remark}
The covariance $\bar C$ is realised by the following construction. Take a standard Wiener process $W$,
 an independent Brownian bridge $B$, and an independent normal random variable $\xi$. 
 Then,
the process $X$ defined by
 \begin{equ}
X_t = \xi + W_t\;,\quad t \in [0,\pi]\;,\qquad X_t = \xi + (2\pi-t) W_\pi + B_{2\pi-t}\;,\quad t\in [\pi,2\pi]\;,
\end{equ}
does have $\bar C$ as its covariance operator.
\end{remark}

We now make the following construction for $\eps > 0$.
Let $Z^\eps$ be a stationary Ornstein-Uhlenbeck process with characteristic time $\eps$ and variance 
${1\over {2\eps}}$, so that its covariance is given by
\begin{equ}
\E Z^\eps_s Z^\eps_t = {1\over 2\eps} \exp\Bigl(-{|t-s|\over \eps}\Bigr)\;.
\end{equ}
We then define $X^\eps$ as the integral of $Z^\eps$, so that $X^\eps_t = \int_0^t Z^\eps_s\,ds$.
The covariance of $X^\eps$ is then given by
\begin{equ}[e:covX]
\E X^\eps_s X^\eps_t = (s\wedge t) - {\eps \over 2} \bigl(1 + e^{-|t-s|/\eps} - e^{-s/\eps} - e^{-t/\eps}\bigr) \eqdef K^\eps(s,t)\;.
\end{equ}
Denote now by $A^\eps$ the vector consisting of the ``boundary data'' 
$(\sqrt {2\eps} Z^\eps_0, \sqrt {2\eps}Z^\eps_\pi, X^\eps_\pi)$, so that the covariance of $A^\eps$ is given by
\begin{equ}
Q_\eps = 
\begin{pmatrix}
1 & \delta  & \sqrt {\eps \over 2} (1- \delta) \cr
\delta  & 1 & \sqrt {\eps \over 2} (1- \delta) \cr
\sqrt {\eps \over 2} (1- \delta)&\sqrt  {\eps \over 2} (1- \delta) & \pi -  \eps \bigl(1- \delta \bigr)
\end{pmatrix}
\;,\quad \delta = e^{-\pi /\eps}\;.
\end{equ}
Here, we have normalised $Z^\eps$ in such a way that it is of order one.
We also note that the covariance of $X^\eps_t$ with $A^\eps$ is given by
\begin{equ}[e:vt]
v^\eps_t := \E X^\eps_t A^\eps = \Bigl(\sqrt{\eps \over 2}(1-e^{-t/\eps}), \sqrt{\eps \over 2}(e^{-(\pi-t)/\eps} - \delta),   K^\eps(t,\pi) \Bigr)\;,
\end{equ}
so that there exists a process $Y^\eps$ independent of $A^\eps$ such that
\begin{equ}[e:exprX]
X^\eps_t = Y^\eps_t + \scal{Q_\eps^{-1} v^\eps_t, A^\eps}\;.
\end{equ}
The covariance of the process $Y^\eps_t$ is then given by
\begin{equ}
\E Y^\eps_s Y^\eps_t = K^\eps(s,t) - \scal{Q_\eps^{-1} v^\eps_t, v^\eps_s}\;.
\end{equ}
Let now $\tilde Y^\eps$ be a process independent of $Y^\eps$ and $A^\eps$, but identical in law
to $Y^\eps$, and define $\tilde X^\eps$ to be the stochastic process given by
\begin{equ}[e:defXtilde]
\tilde X^\eps_t = \tilde Y^\eps_t + \scal{Q_\eps^{-1} v^\eps_t, J A^\eps}\;,
\end{equ}
where $J$ is the matrix given by $J = \diag(-1,-1,1)$. Note that $\tilde X^\eps$ is \textit{not} 
identical in law to $X^\eps$ because, while we force the identity $\tilde X^\eps(\pi) = X^\eps(\pi)$, we force 
the derivatives at the boundary points to satisfy the relations
\begin{equ}[e:gluing]
\d_t \tilde X^\eps(t) = - \d_t X^\eps(t)\;,\qquad t \in \{0,\pi\}\;.
\end{equ}
Actually, it follows from \eref{e:exprX} and \eref{e:defXtilde} that the covariance of $\tilde X$ can be written as
\begin{equ}[e:covXtilde]
\E \tilde X^\eps_s \tilde X^\eps_t  = \E X^\eps_s X^\eps_t  + \scal[b]{Q_\eps^{-1} v^\eps_t, \bigl(JQ_\eps J - Q_\eps\bigr)Q_\eps^{-1} v^\eps_s}\;.
\end{equ}
With all of these notations at hand, we let $\xi$ be a $\CN(0,1)$ distributed random variable
independent of $X^\eps$ and $\tilde X^\eps$, and we define a process $W^\eps$ by
\begin{equ}
W^\eps_t = 
\left\{\begin{array}{cl}
	\xi + X^\eps_t & \text{if $t \le \pi$,} \\
	\xi + \tilde X^\eps_{2\pi-t} & \text{otherwise.}
\end{array}\right.
\end{equ}
We denote by $\bar C_\eps$ the covariance of $W^\eps$ and we let $\bar \nu_\eps$ be the law
of $W^\eps$. Our aim now is to show that the sequence $(\bar \nu_\eps, \nu_\eps)$,
where $\nu_\eps$ has covariance operator $(1-\d_t^2 + \eps^2 \d_t^4)^{-1}$, satisfies the assumptions of Corollary~\ref{cor:convHS}.
To this end, we note that we can write
\begin{equ}
1-\d_t^2 + \eps^2 \d_t^4 = \CA_{\eps}\CA_{\eps}^*\;,\qquad
\CA_{\eps} = 1 + \sqrt{1-2\eps}\,\d_t - \eps\d_t^2\;,
\end{equ}
so that the integral operator $\Lambda^\eps = 1-\CA_\eps \bar C_\eps \CA_\eps^*$ associated to the pair
$(\nu_\eps, \bar \nu_\eps)$ is given by
the integral kernel
\begin{equ}
\Lambda^\eps(s,t) = \delta(t-s) - \CA_{\eps;t}\CA_{\eps;s} \bar C_\eps(s,t)\;,
\end{equ}
where we denote by $\CA_{\eps;s}$ the differential operator $\CA_\eps$ acting on the $s$ variable and
similarly for $t$. Recall that on the other hand, the operator $\Lambda$ associated in the same way to $(\nu, \bar \nu)$
is given by
\begin{equ}
\Lambda(s,t) = \delta(t-s) - \CA_{0;t}\CA_{0;s} \bar C (s,t)\;,
\end{equ}
with $\bar C$ defined as in \eref{e:defCbar}. 
With these notations at hand, we then have

\begin{lemma}\label{lem:convHS}
We have $\|\Lambda - \Lambda^\eps\| = \CO(\sqrt \eps)$, where $\|\cdot\|$ denotes the $\L^2$-norm on $[0,2\pi]^2$,
which is identical to the Hilbert-Schmidt norm when identifying kernels with the corresponding integral operators.
\end{lemma}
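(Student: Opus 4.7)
The plan is to decompose
\begin{equ}
\Lambda - \Lambda^\eps = \CA_\eps(\bar C_\eps - \bar C)\CA_\eps^* + (\CA_\eps - \CA_0)\bar C\,\CA_\eps^* + \CA_0\bar C(\CA_\eps^* - \CA_0^*)\;,
\end{equ}
and to bound the Hilbert-Schmidt norm of each summand by the $L^2$-norm of its integral kernel on $[0,2\pi]^2$. Since $\CA_\eps - \CA_0 = (\sqrt{1-2\eps}-1)\d_t - \eps\d_t^2 = \CO(\eps)\d_t - \eps\d_t^2$, the last two terms consist of at most two derivatives of $\bar C$ multiplied by a factor of order $\eps$. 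Because the limit covariance $\bar C$ is explicitly given by \eref{e:defCbar} as a piecewise polynomial in $s\wedge t$ and $s\vee t$ on each of the four rectangles cut out by $s = \pi$ and $t=\pi$, its second derivatives produce only $\delta(s-t)$-singularities along the diagonals; these cancel exactly the corresponding distributional contributions already contained inside $\CA_\eps\bar C\CA_\eps^*$ and $\CA_0\bar C\CA_0^*$ when the operator identity is unfolded, leaving bounded integral kernels of $L^\infty$-size $\CO(\eps)$ and hence HS norm $\CO(\eps)$.

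The delicate piece is $\CA_\eps(\bar C_\eps - \bar C)\CA_\eps^*$. On $[0,\pi]^2$, the explicit formula \eref{e:covX} gives
\begin{equ}
\bar C_\eps(s,t) - \bar C(s,t) = -\tfrac{\eps}{2}\bigl(1 + e^{-|s-t|/\eps} - e^{-s/\eps} - e^{-t/\eps}\bigr)\;,
\end{equ}
with analogous expressions on the remaining three rectangles via \eref{e:defXtilde} and \eref{e:covXtilde}. Applying $\CA_{\eps;s}\CA_{\eps;t}$ to this difference generates three kinds of terms. First, the constant $-\eps/2$ contributes a smooth kernel of $L^\infty$-size $\CO(\eps)$. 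Second, the off-diagonal exponential $-\tfrac{\eps}{2}e^{-|s-t|/\eps}$, after double differentiation, yields $\tfrac{1}{2\eps}e^{-|s-t|/\eps}$ (up to a $\delta(s-t)$ that cancels the one produced by the analogous treatment of the $(s\wedge t)$-part of $\bar C$); its $L^2$-norm on $[0,2\pi]^2$ is $\CO(\eps^{-1/2})$, but it enters with an additional factor of $\eps$ arising from the $\eps\d_t^2$ component of $\CA_\eps$, giving a net contribution of order $\sqrt\eps$. Third, the boundary-layer factors $e^{-s/\eps}$, $e^{-(\pi-s)/\eps}$ and their $t$-analogues, together with the cross-contributions furnished by the rank-three correction $\scal{Q_\eps^{-1}v^\eps_t, JA^\eps}$ from \eref{e:defXtilde}, produce kernels that are $\CO(1)$ pointwise but supported in strips of width $\eps$ in one variable; their $L^2$-norm is thus $\CO(\sqrt\eps)$, and this scale is preserved under $\CA_\eps$ since every additional derivative of $e^{-s/\eps}$ costs $\eps^{-1}$, compensated by the explicit $\eps$-prefactors in $\CA_\eps$.

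The main obstacle is the bookkeeping across the interfaces $s = \pi$ and $t = \pi$ where the four rectangles meet, and across the periodic endpoints $t \in \{0,2\pi\}$. An $\CO(1)$ jump of $\CA_{\eps;s}\CA_{\eps;t}\bar C_\eps(s,t)$ across any of these interfaces would produce a $\delta$-type contribution to the kernel and spoil the $\sqrt\eps$ estimate. The construction of $\bar\nu_\eps$ via the reflection matrix $J = \diag(-1,-1,1)$ and the memory factor $\delta = e^{-\pi/\eps}$ is engineered precisely so that the gluing identity \eref{e:gluing} annihilates these potential $\CO(1)$ jumps at $t = \pi$ (matching the first derivatives by sign-flip while keeping the value continuous), leaving only the boundary layers already accounted for in the third class of terms above. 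Once these cancellations are verified rectangle by rectangle and the three HS bounds are summed, we obtain $\|\Lambda - \Lambda^\eps\| = \CO(\sqrt\eps)$.
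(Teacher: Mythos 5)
Your overall strategy (compare the kernels in $\L^2([0,2\pi]^2)$ after splitting off $\bar C_\eps-\bar C$ and $\CA_\eps-\CA_0$) is the same as the paper's, but the step that actually carries the lemma is wrong as you have written it. The dangerous object is $\CA_{\eps;s}\CA_{\eps;t}\bigl(-{\eps\over 2} e^{-|t-s|/\eps}\bigr)$ (and its translates $e^{-|t-s\mp 2\pi|/\eps}$ that appear on the rectangles $s\le\pi\le t$ after the reflection). You claim that the kernel ${1\over 2\eps}e^{-|t-s|/\eps}$, of $\L^2$-norm $\CO(\eps^{-1/2})$, ``enters with an additional factor of $\eps$ arising from the $\eps\d_t^2$ component of $\CA_\eps$''. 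It does not: that kernel is produced by the first-order parts $\sqrt{1-2\eps}\,\d_s\cdot\sqrt{1-2\eps}\,\d_t$, whose coefficient is $1-2\eps=\CO(1)$, and the piece coming from $\eps\d_s^2\cdot\eps\d_t^2$ is of the same size ${1\over2\eps}e^{-|t-s|/\eps}$ as well. So term by term the contributions are of order $\eps^{-1/2}$ in $\L^2$ and do \emph{not} vanish as $\eps\to0$; the estimate only holds because of an exact cancellation between all these pieces. Indeed, away from the diagonal one has, for $s\le t$,
\begin{equ}
\CA_{\eps;s}\CA_{\eps;t}\,e^{(s-t)/\eps} = \Bigl(1+{\sqrt{1-2\eps}\over\eps}-{1\over\eps}\Bigr)\Bigl(1-{\sqrt{1-2\eps}\over\eps}-{1\over\eps}\Bigr)e^{(s-t)/\eps} = e^{(s-t)/\eps}\;,
\end{equ}
because the product of the two prefactors is the symbol $1-\lambda^2+\eps^2\lambda^4$ of $\CA_\eps\CA_\eps^*$ evaluated at $\lambda=1/\eps$, which equals exactly $1$. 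Hence this whole term remains $-{\eps\over2}e^{-|t-s|/\eps}$, of $\L^2$-norm $\CO(\eps^{3/2})$; this identity is the heart of the proof and your write-up never identifies it. The same looseness affects your blanket statement that ``every additional derivative of $e^{-s/\eps}$ costs $\eps^{-1}$, compensated by the explicit $\eps$-prefactors in $\CA_\eps$'': the first-order part of $\CA_\eps$ carries an $\CO(1)$ coefficient, so a single derivative is not compensated, and if your bookkeeping were literally correct it would also render the diagonal exponential harmless term by term, which it is not. The correct accounting (as in the paper) is that all boundary-layer contributions — including the cross-rectangle ones, for which you need $JQ_\eps J = Q_\eps+\CO(\sqrt\eps)$ and $JQ_\eps^{-1}=\diag(-1,-1,\pi^{-1})+\CO(\sqrt\eps)$, estimates you do not verify — can be written as $\sqrt\eps\,\scal{\bar\Psi_t,\bar\CK^\eps\bar\Psi_s}$ with $\bar\Psi$ built from $1$, $t$, $\sqrt\eps e^{-t/\eps}$, $\sqrt\eps e^{-(\pi-t)/\eps}$ and $\bar\CK^\eps$ uniformly bounded, together with the fact that $\|\CA_\eps\bar\Psi\|_{\L^2}$ is bounded uniformly in $\eps$; this is what produces the dominant $\CO(\sqrt\eps)$.

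A second, lesser, gap is the distributional bookkeeping. In your decomposition the $\delta$, $\delta'$ and $\delta''$ contributions along the diagonal, as well as the possible jumps across $s,t\in\{0,\pi\}$ and the periodic endpoints, are scattered over the three summands with various $\eps$-dependent coefficients (for instance $(\CA_\eps-\CA_0)_s\CA_{\eps;t}$ acting on the $(s\wedge t)$-part of $\bar C$ produces $\delta$- and $\delta'$-type terms with coefficients of order $\eps$, while $\CA_{\eps;s}\CA_{\eps;t}$ acting on $-{\eps\over2}e^{-|t-s|/\eps}$ produces $\delta$-terms with $\CO(1)$ coefficients), and you simply assert that they cancel against each other. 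They do, but this requires a genuine verification; the cleaner route, and the one taken in the paper, is to expand $\bar C_\eps$ locally at the diagonal as $|t-s|^3/(12\eps^2)$ plus a $\CC^4$ remainder and to use the $\CC^1$ gluing \eref{e:gluing} at $s,t\in\{0,\pi\}$, concluding that both $\Lambda$ and $\Lambda^\eps$ equal $\delta(t-s)$ plus a kernel with at most jump discontinuities, so that it suffices to estimate the difference away from the singular set.
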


\begin{proof}
Note first that $\bar C_\eps$, viewed as a $2\pi$-periodic function in both arguments, 
is smooth everywhere, except at $s=t$ and at $s, t \in \{0,\pi\}$. We first analyse the singularities and
then proceed to the proof of the bound on $\Lambda - \Lambda^\eps$.

We argue that the singularities at $\{0,\pi\}$ are harmless. Indeed, it follows from the gluing condition \eref{e:gluing}
that $\bar C_\eps$ is globally $\CC^1$, with possible jump-discontinuities in its second derivatives. Considering the
singularity $s=\pi$ for example, it then follows that  
$\CA_{\eps;t}\bar C_\eps$ is still $\CC^1$ in the vicinity of this discontinuity line, so that $\CA_{\eps;s}\CA_{\eps;t}\bar C_\eps$
has at most a jump discontinuity. 

Let us now turn to the singularity at $s=t$. It follows immediately from \eref{e:covX} that
one has 
\begin{equ}
\bar C_\eps(s,t) = {|t-s|^3 \over 12 \eps^2} + R_\eps^1(s,t)\;,
\end{equ}
where $R_\eps^1$ is $\CC^4$  in a vicinity of this singularity. It follows immediately that
\begin{equ}
\CA_{\eps;t}\bar C_\eps(s,t) = -{|t-s| \over 2 \eps} + R_\eps^2(s,t)\;,
\end{equ}
where $R_\eps^2$ is $\CC^1$ with a jump discontinuity in its second derivative in a vicinity of the singularity.
It follows from these considerations that we do indeed have $\CA_{\eps;s}\CA_{\eps;t}\bar C_\eps = \delta(t-s)$,
up to a smooth function with jump discontinuities along the singularity lines, so that it suffices to bound $\Lambda - \Lambda^\eps$
away from the singularities.

In order to do so, we first introduce the vector-valued function $\Psi$ given by
\begin{equ}
\Psi_t = \bigl(1, t, \sqrt \eps e^{-t/\eps}, \sqrt \eps e^{-(\pi-t)/\eps}\bigr)\;.
\end{equ}
With this notation, it follows from \eref{e:covX}, \eref{e:covXtilde}, and \eref{e:vt} that there exist
matrices $\CK^\eps$ and $\CK_{\star\!\star}^\eps$ such that, for $s,t \in [0,\pi]$,
\begin{equs}
\E X_t X_s &= (s \wedge t) - {\eps \over 2} e^{-|t-s|/\eps} + \sqrt \eps\scal{\Psi_t, \CK^\eps\Psi_s}\;, \\
\E \tilde X_t \tilde X_s &= (s \wedge t) - {\eps \over 2} e^{-|t-s|/\eps} + \sqrt \eps\scal{\Psi_t,\CK^\eps_{\star\!\star}\Psi_s}\;,
\end{equs}
where $\CK^\eps$ and $\CK^\eps_{\star\!\star}$ both have all of their entries bounded by some constant independent of $\eps$.
For $X$, this is obvious by inspection, and for $\tilde X$ it follows from the fact that $JQ_\eps J = Q_\eps + \CO(\sqrt \eps)$.

In order to obtain a similar expression for $\E X_t \tilde X_s$, we note from \eref{e:vt} that there exists a vector
$V^\eps$ with all entries bounded by a constant independent of $\eps$ such that 
\begin{equ}
v_t^\eps =  \Bigl(-\sqrt{\eps \over 2}e^{-t/\eps}, \sqrt{\eps \over 2}e^{-(\pi-t)/\eps},  t\Bigr) + \sqrt \eps \scal{V^\eps, \Psi_t}\;.
\end{equ}  
Since furthermore $JQ_\eps^{-1} = \diag(-1,-1,\pi^{-1}) + \CO(\sqrt \eps)$, we conclude that 
there exist a matrix $\CK_\star^\eps$ with uniformly bounded entries such that
\begin{equ}
\E X_t \tilde X_s = \scal{v_t^\eps, JQ_\eps^{-1} v_s^\eps} = {st \over \pi} - {\eps \over 2} \bigl(e^{-{t+s\over \eps}} + e^{-{2\pi - t - s \over \eps}}\bigr)
 + \sqrt \eps\scal{\Psi_t, \CK_\star^\eps\Psi_s}\;.
\end{equ}
Combining these expressions, we conclude that there exists a matrix $\bar \CK^\eps$ uniformly bounded in $\eps$
such that 
\begin{equ}
\bar C_\eps(s,t) = \bar C(s,t) - {\eps \over 2} \bigl(e^{-|t-s|/\eps} +e^{-|t-s+2\pi|/\eps} +e^{-|t-s-2\pi|/\eps}\bigr) + \sqrt \eps \scal{\bar \Psi_t, \bar \CK^\eps \bar \Psi_s}\;,
\end{equ}
where $\bar \Psi_t$ consists of $\Psi_t$ restricted to $[0,\pi]$, as well as its translate to $[\pi, 2\pi]$.
Writing $\CA_\eps = \CA_0 + \delta \CA_\eps$, we conclude that 
\begin{equs}\label{e:boundLambda}
\Lambda - \Lambda_ \eps &= \bigl(\CA_{0;t}\delta \CA_{\eps;s} + \CA_{0;t}\delta \CA_{\eps;s} + \delta \CA_{\eps;t}\delta \CA_{\eps;s}\bigr)\bar C(s,t)\\
&\quad - {\eps \over 2} \CA_{\eps;s}\CA_{\eps;t}\bigl(e^{-|t-s|/\eps} +e^{-|t-s+2\pi|/\eps} +e^{-|t-s-2\pi|/\eps}\bigr)\\
&\quad + \sqrt \eps \scal{(\CA_{\eps}\bar \Psi)_t, \bar \CK^\eps (\CA_{\eps}\bar \Psi)_s}\;.
\end{equs}
Since $\bar C$ is smooth outside of the singular set and since $\delta \CA_\eps$ is a differential operator with coefficients of order $\eps$,
the first term is bounded by $\CO(\eps)$ uniformly in $s,t$. In order to bound the third term, we can check by inspection that the
$\L^2$-norm of $\CA_\eps \Psi$ is bounded, uniformly in $\eps$ (this is how the scaling of the various terms was chosen in the first place),
so that this term has $\L^2$-norm bounded by $\CO(\sqrt \eps)$. It therefore remains to bound the terms appearing on the second line
in \eref{e:boundLambda}. Since these terms are just translates of each other, it is sufficient to bound the first one and, by symmetry,
it suffices to consider the region $s \le t$, so that the term in question is given by a constant multiple of 
\begin{equs}
\eps \CA_{\eps;s} e^{s/\eps} \CA_{\eps;t}e^{-t/\eps} &= \eps \Bigl(1 + {\sqrt{1-2\eps} \over \eps} - {1\over \eps}\Bigr)\Bigl(1 - {\sqrt{1-2\eps} \over \eps} - {1\over \eps}\Bigr) e^{(s-t)/\eps}\\
&= \eps e^{(s-t)/\eps}\;.
\end{equs}
This term has an $\L^2$-norm of order $\CO(\eps^{3/2})$, so that the claim follows.
\end{proof}

\subsection{Exponential moment bounds for $\mu_\eps$}

Let $Z^\eps$ be an $n$-dimensional Ornstein-Uhlenbeck process with time-scale $\eps$, that is
\begin{equ}
Z^\eps_t = \int_{-\infty}^t f_\eps(t-s)\,dW(s)\;,\qquad f_\eps(t) = \eps^{-1}e^{-t/\eps}\;,
\end{equ}
where $W$ is a standard $n$-dimensional Wiener process.
Note that $f_\eps$ is an approximation to the Dirac $\delta$-function, so that for small values of $\eps$, $Z^\eps$
is an approximation to white noise. We will also use the shorthand notation $F_\eps = 1- \exp(-t/\eps)$ in the sequel,
so that $F_\eps(t) = \int_0^t f_\eps(s)\,ds$.
We then define
\begin{equ}
W^\eps_t = \int_0^{t} Z^\eps_s\,ds\;,
\end{equ}
which can also be rewritten as
\begin{equ}
W^\eps_t = \int_{-\infty}^0 \bigl(F_\eps(t-s) - F_\eps(-s)\bigr) dW(s) + \int_0^{t} F_\eps(t-s)\,dW(s)\;.
\end{equ}
Note that for small values of $\eps$, this is a good approximation to Brownian motion. 
In particular, the variance of any component of $W^\eps_t$ is given by
\begin{equ}[e:covWeps]
\E |W^{\eps,i}_t|^2 = t - \eps F_\eps(t)\;.
\end{equ}
The aim of this section is to show that 
if $G\colon \R^n \to \R^n$ is a sufficiently nice function, then the quantity
\begin{equ}[e:defHeps]
H_\eps := \int_0^{\pi} \scal{G(W^\eps_t), Z^\eps_t}\,dt\;,
\end{equ}
is uniformly exponentially integrable as $\eps \to 0$. 
Indeed, we have the following result:

\begin{proposition}\label{prop:unexpmu}
Let $G\colon \R^n\to \R^n$ be a bounded $\CC^2$ function with bounded first and second derivatives.
Then, there exists $\eps_0 > 0$ such that 
\begin{equ}
\sup_{\eps < \eps_0} \E \exp  \int_0^{\pi} \scal{G(W^\eps_t), Z^\eps_t}\,dt < \infty\;.
\end{equ}
\end{proposition}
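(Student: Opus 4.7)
The strategy is to exploit the Ornstein--Uhlenbeck equation $dZ^\eps_t = -\eps^{-1}Z^\eps_t\,dt + \eps^{-1}\,dW_t$, rewritten as $Z^\eps_t\,dt = dW_t - \eps\,dZ^\eps_t$. Substituting into $H_\eps$ and integrating by parts on the $dZ^\eps$ piece (legitimate because $t\mapsto G(W^\eps_t)$ is $\CC^1$ with derivative $DG(W^\eps_t)Z^\eps_t$, while $Z^\eps$ is a continuous semimartingale) yields the decomposition
\begin{equ}
H_\eps = \underbrace{\int_0^\pi \scal{G(W^\eps_t),dW_t}}_{I_\eps} - \underbrace{\eps \bigl[\scal{G(W^\eps_t),Z^\eps_t}\bigr]_0^\pi}_{B_\eps} + \underbrace{\eps \int_0^\pi \scal{Z^\eps_t, DG(W^\eps_t)Z^\eps_t}\,dt}_{Q_\eps}\;.
\end{equ}
Morally, this is the rigorous counterpart of writing the Stratonovich-like object $H_\eps$ as an It\^o integral plus the It\^o--Stratonovich correction $Q_\eps$, up to a boundary remainder $B_\eps$ of order $\sqrt{\eps}$.

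I would then apply H\"older's inequality with exponents $1/p+1/q+1/r=1$ to separate the three pieces. For $I_\eps$: the integrand is pointwise bounded by $\|G\|_\infty$, so its quadratic variation is bounded by $\pi\|G\|_\infty^2$ and Novikov's condition is trivial; the corresponding Dol\'eans--Dade exponential is a true martingale, which gives $\E\exp(pI_\eps)\le \exp(p^2\pi\|G\|_\infty^2/2)$ for every $p>0$, uniformly in $\eps$. For $B_\eps$: each component of $\eps Z^\eps_t$ is centred Gaussian with variance $\eps/2$, so the Gaussian Laplace transform yields exponential moments of any order, uniformly for small $\eps$.

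The term $Q_\eps$ is the main obstacle. Bounding it by $\|DG\|_\infty\,\eps\,\|Z^\eps\|_{\L^2([0,\pi])}^2$ reduces the problem to uniform exponential integrability of a second order Gaussian chaos. Letting $K^\eps$ denote the covariance operator of one scalar component of $Z^\eps$ on $\L^2([0,\pi])$, with kernel $\frac{1}{2\eps}e^{-|t-s|/\eps}$, the standard determinantal formula gives
\begin{equ}
\E\exp(\alpha \eps\|Z^\eps\|_{\L^2}^2) = \det(I-2\alpha\eps K^\eps)^{-n/2}\;.
\end{equ}
The two quantitative facts I need are: (i) $\|K^\eps\|_\mathrm{op}\le 1$ \emph{uniformly in $\eps$} (extending $f\in\L^2([0,\pi])$ by zero to $\R$ makes $K^\eps$ a convolution with $\frac{1}{2\eps}e^{-|\cdot|/\eps}$, hence a Fourier multiplier with symbol $(1+\eps^2\xi^2)^{-1}\le 1$); (ii) $\eps\cdot\mathrm{tr}(K^\eps) = \pi/2$. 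Using $-\log(1-x)\le x/(1-c)$ for $0\le x\le c<1$ and expanding $-\frac{n}{2}\sum_i\log(1-2\alpha\eps\lambda_i^\eps)$ then yields $\log \E\exp(r\|DG\|_\infty\eps\|Z^\eps\|_{\L^2}^2)\le C_r$ uniformly in $\eps<\eps_0$ as soon as $2r\|DG\|_\infty\eps_0<1$. We finally fix such an $r>1$ and take $p,q$ large so that $1/p+1/q=1-1/r$, which is permitted because the bounds on $I_\eps$ and $B_\eps$ hold for \emph{all} exponents. The essential difficulty is precisely this last estimate: the OU covariance has trace of order $\eps^{-1}$ and operator norm of order $1$, and it is the simultaneous control of both, combined with the $\eps$ prefactor produced by the integration by parts, that makes the chaos exponentially integrable uniformly in $\eps$.
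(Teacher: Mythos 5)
Your proof is correct, and it takes a genuinely different route from the paper. The paper proves this proposition via the Clark--Ocone formula: it writes $H_\eps = \E H_\eps + \int_{-\infty}^{\pi}\E(\D_s H_\eps\,|\,\F_s)\,dW(s)$ and then bounds the conditional Malliavin derivative pathwise, splitting it into four terms ($J_1,\dots,J_4$) and using the explicit Gaussian conditioning $\E(Z^\eps_t\,|\,\F_s\vee W^\eps_t)$ together with heat-semigroup smoothing of $DG$ and $D^2G$ to get an integrand whose square has uniformly exponentially integrable time integral; the exponential martingale argument then closes the bound. You instead exploit the semimartingale structure of the OU process itself, writing $Z^\eps_t\,dt = dW_t - \eps\,dZ^\eps_t$ and integrating by parts (valid since $t\mapsto G(W^\eps_t)$ is $\CC^1$, so there is no cross-variation), which yields the clean decomposition $H_\eps = I_\eps - B_\eps + Q_\eps$ into an It\^o integral with integrand bounded by $\|G\|_\infty$, a Gaussian boundary term of size $O(\sqrt\eps)$, and the correction $Q_\eps = \eps\int_0^\pi\scal{Z^\eps_t, DG(W^\eps_t)Z^\eps_t}\,dt$; the only nontrivial estimate is then the uniform exponential integrability of $\eps\|Z^\eps\|_{\L^2}^2$, which your determinant bound handles correctly since $\|K^\eps\|_{\mathrm{op}}\le 1$ (Schur test or the Fourier symbol $(1+\eps^2\xi^2)^{-1}$ after zero-extension) while $\eps\,\mathrm{tr}(K^\eps)=\pi/2$, and the generalized H\"older inequality recombines the three pieces. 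What each approach buys: yours is shorter and more elementary (no Malliavin calculus, no Clark--Ocone), and it only uses boundedness of $G$ and $DG$, one derivative fewer than the paper requires; on the other hand it leans on the specific fact that $Z^\eps$ solves an SDE driven by the same Wiener process, whereas the paper's Clark--Ocone strategy is more robust to changes in the approximating Gaussian structure and makes the mechanism of the cancellation (conditional expectations smoothing $DG$) explicit. One small point of care, which you did handle: the H\"older exponent $r>1$ must be fixed before $\eps_0$, since the determinant bound needs $2r\|DG\|_\infty\eps_0<1$; this is consistent with the proposition, whose $\eps_0$ is allowed to depend on $G$.
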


\begin{proof}
Let $H_\eps$ be as in \eref{e:defHeps}. It follows from the Clark-Ocone formula \cite{Nualart} that
we can represent $H_\eps$ as a stochastic integral:
\begin{equ}[e:reprHeps]
H_\eps = \E H_\eps + \int_{-\infty}^{\pi} \E \bigl(\D_s^j H_\eps\,|\, \F_s\bigr)\,dW^j(s)\;,
\end{equ}
 (here and in the sequel, summation over repeated indices is implicit) 
 where $\D_s^j$ denotes the Malliavin derivative at time $s$ with respect to $W^j$
and $\F_s$ is the filtration generated by the increments of $W$. Here, the convention we use in the definition
of $\D_s$ is such that if $f$ is any smooth deterministic function, then
\begin{equ}
\D_s^j \int_0^\pi \scal{f(s)\,dW(s)} = f^j(s)\;,\qquad s \in [0,\pi]\;.
\end{equ}
Since the second term in \eref{e:reprHeps} is a martingale, 
the result then follows if we can show that $\E H_\eps$ is uniformly bounded as $\eps \to 0$ and that the quantity
\begin{equ}[e:finalbound]
J_\eps^j \eqdef \int_{-\infty}^{\pi} \bigl|\E \bigl(\D_s^j H_\eps\,|\, \F_s\bigr)\bigr|^2\,ds
\end{equ}
is uniformly exponentially integrable for every $G$ satisfying the assumptions of the theorem.

The expectation of $H_\eps$ is 
relatively straightforward to bound. Indeed, we have the identity
\begin{equ}
\E \bigl(Z^\eps_t \,|\, W^\eps_t \bigr) =  W^\eps_t {\E Z^\eps_t W^\eps_t \over \E |W^\eps_t|^2} = {W^\eps_t \over 2} {F_\eps(t)\over t - \eps F_\eps(t)}\;,
\end{equ}
which implies that
\begin{equ}
|\E H_\eps| \le {1\over2}\int_0^{\pi}{F_\eps(t) \over t - \eps F_\eps(t)} \E \bigl(\scal{G(W^\eps_t), W^\eps_t}\bigr)\,dt
\le C \int_0^{\pi}{F_\eps(t) \over \sqrt{t - \eps F_\eps(t)}}\,dt\;,
\end{equ}
where we have used the boundedness of $G$
together with \eref{e:covWeps} to obtain the second bound. 
Since $F_\eps(t) \approx {t\over \eps} - {t^2 \over 2\eps^2}$ for $t \ll \eps$ and $F_\eps(t) \approx 1$ for $t \gg \eps$,
one can check that the integrand is uniformly bounded by $C/\sqrt t$ for some $C>0$, so that $|\E H_\eps|$ is indeed
bounded by a constant independently of the value of $\eps \le 1$. 

Let us now turn to the bounds on $\D_s H_\eps$. It follows immediately from the definition of the Malliavin derivative
that one has the identities
\begin{equ}
\D_s^i W^{\eps,j}_t = \delta_{ij} \bigl(F_\eps(t-s)\one_{s \le t} - F_\eps(-s)\one_{s \le 0}\bigr)\;,\quad \D_s^i Z^{\eps,j}_t = \delta_{ij} f_\eps(t-s)\one_{s \le t}\;.
\end{equ}
We treat the case $s < 0$ separately from the case $s \ge 0$. For $s \ge 0$, we have
\begin{equs}
 \E \bigl(\D_s^j H_\eps\,|\, \F_s\bigr) &= \int_s^{\pi} \E\bigl( \d_j G_i (W^\eps_t) F_\eps(t-s)Z_\eps^i(t) + G_j(W^\eps_t) f_\eps(t-s)\,\big|\, \F_s\bigr)\,dt\\
 &\eqdef J_1^j(s) + J_2^j(s)\;.
\end{equs}
The term $J_2^j$ is easy to bound since $G$ is bounded and the variation of $f_\eps$ is also bounded, uniformly in $\eps$, so that 
\begin{equ}[e:boundJ2]
|J_2(s)| \le \|G\|_{\infty} \int_s^{\pi} f_\eps(t-s)\,dt \le \|G\|_{\infty}\;,
\end{equ}
holds almost surely.
The first term is much more tricky to bound. We write 
\begin{equ}[e:exprJ1]
J_1^j(s) = \E\Bigl(\int_s^{2\pi}  \d_j G_i (W^\eps_t) F_\eps(t-s)\,\E\bigl(Z^{\eps,i}_t\,\big|\, \F_s \vee W^\eps_t\bigr)\,dt\,\Big|\,\F_s\Bigr)\;,
\end{equ}
and we note that we have the identities
\begin{equs}
Z^\eps_t &= \d_t a_s(t) + \int_s^t f_\eps(t-r)\,dW(r)\;,\\
W^\eps_t &= a_s(t) + \int_s^t F_\eps(t-r)\,dW(r)\;,
\end{equs}
where we introduced the shorthand notation
\begin{equ}
a_{s}(t) = W^\eps_s + \eps F_\eps(t-s)Z^\eps_s\;.
\end{equ}
This implies that
\begin{equs}[e:condZeps]
\E\bigl(Z^\eps_t\,\big|\, \F_s \vee W^\eps_t\bigr) &=  \d_t a_s(t) + M_\eps(t-s)\bigl(W^\eps_t - a_s(t) \bigr)\;,\\
M_\eps(t) &= {F_\eps^2(t) \over 2t - \eps F_\eps(t)(2+F_\eps(t))}\;.
\end{equs}
An important fact is that the function $M_\eps$ has the property that there exists a constant $C$ such that the bound
\begin{equ}[e:upperboundM]
|M_\eps(t)| \le {C\over t}\;,
\end{equ}
holds for every $t \ge 0$, uniformly in $\eps > 0$. Note that the scaling properties of $F_\eps$ imply that 
$M_\eps(t) = \eps^{-1} M_1(t/\eps)$, so that this needs to be shown only for $M_1$.
On the one hand, \eref{e:upperboundM} clearly holds for $t \to \infty$. On the other hand, one can check that
$\lim_{t \to 0} tM_\eps(t) = {3\over 2}$. The claim then follows by noting that the numerator  in the definition of $M_\eps$
is bounded and that the denominator is an increasing function of $t$.

We now introduce the further shorthand notation
\begin{equ}
T_{\eps}(t) = \int_0^{t} F_\eps^2(r)\,dr\;, 
\end{equ}
and we denote by $\CP_T$ the heat semigroup on $\R^n$. With these shorthands, we can combine \eref{e:condZeps} 
and \eref{e:exprJ1} to obtain the following explicit expression for $J_1$:
\begin{equs}
J_1^j(s) &= \int_s^{\pi} {\d_t a_s^i(t)} \bigl(\CP_{T_\eps(t-s)} \d_j G_i\bigr)(a_s(t))\,F_\eps(t-s)\,dt \\
&\quad  + {1\over 2}\int_s^{\pi} M_\eps(t-s)\sqrt {T_\eps(t-s)} \bigl(\CP_{T_\eps(t-s)} \d_{ij}^2 G_i\bigr)(a_s(t))\,F_\eps(t-s)\,dt\\
&\eqdef J_{1,1}^j(s) + J_{1,2}^j(s)\;.
\end{equs}
Since $\d_j G_i$ is bounded by assumption, it follows immediately from the definition of $J_{1,1}^j(s)$ that
there exists a constant $C$ such that the bound
\begin{equ}[e:boundJ11]
|J_{1,1}^j(s)| \le  \eps |Z^\eps_s|\,\|DG\|_{\infty}\;,
\end{equ}
holds almost surely. In order to bound $J_{1,2}^j$, we note that $M_\eps(t-s)\sqrt {T_\eps(t-s)} \le {3\over 2}|t-s|^{-1/2}$
so that the bound
\begin{equ}[e:boundJ12]
|J_{1,2}^j(s)| \le C\int_s^{\pi} {\bigl|\bigl(\CP_{T_\eps(t-s)} \d_{ij}^2 G_i\bigr)(a_s(t))\Bigr| \over \sqrt{t-s}} \,dt
\le  C \sqrt{\pi - s}\,\|D^2G\|_{\infty}\;,
\end{equ}
holds almost surely.

We now turn to the case $s < 0$. For this case, we have the identity
\begin{equs}
 \E \bigl(\D_s^j H_\eps\,|\, \F_s\bigr) &= \int_0^{\pi} \E\bigl( \d_j G_i (W^\eps_t) \bigl(F_\eps(t-s) \\
 &\qquad - F_\eps(-s)\bigr) Z_\eps^i(t) + G_j(W^\eps_t) f_\eps(t-s)\,\big|\, \F_s\bigr)\,dt\\
 &\eqdef J_3^j(s) + J_4^j(s)\;.
\end{equs}
The term $J_4^j$ is easy to bound as before, yielding
\begin{equ}[e:boundJ4]
|J_4^j(s)| \le e^{s/\eps} \|G\|_{\infty} \;,
\end{equ}
so that 
\begin{equ}
\int_{-\infty}^0 |J_4^j(s)|^2 \le {\eps\over 2} \|G\|_{\infty}^2\;.
\end{equ}
In order to bound $J_3^j$, we note that we have the identities
\begin{equ}
Z^\eps_t = e^{-(t-s)/\eps}Z^\eps_s + \int_s^t f_\eps(t-r)\,dW(r)\;,
\end{equ}
as well as $F_\eps(t-s) - F_\eps(-s) = e^{s/\eps}F_\eps(t)$. It follows that 
\begin{equs}
|J_3^j| &\le e^{2s/\eps} \|\d_j G\|_{\infty} \Bigl(\eps |Z_s^\eps| + \int_0^\pi \sqrt{\E \Bigl(\Bigl(\int_s^t f_\eps(t-r)\,dW(r)\Bigr)^2\,\Big|\,\F_s\Bigr)}\,dt\Bigr) \\
&\le e^{2s/\eps} \|\d_j G\|_{\infty} \bigl(\eps |Z_s^\eps| + \eps^{-1/2}\bigr) \eqdef e^{2s/\eps} \tilde J_3^j(s)\;.
\end{equs}
In order to conclude, note that by \eref{e:finalbound} it remains to show that $K\int_{0}^\pi |J_k^j(s)|^2\,ds$ is uniformly
exponentially integrable for every $K>0$ and for $k \in \{1,2\}$, and similarly for $K\int_{-\infty}^0 |J_k^j(s)|^2\,ds$
with $k \in \{3,4\}$. These bounds are trivial for $k \in \{2,4\}$ by \eref{e:boundJ2} and \eref{e:boundJ4}.
To bound the term involving $J_1^j$, we deduce from Jensen's inequality that
\begin{equ}
\E \exp \Bigl(K\int_0^\pi |J_1^j(s)|^2\,ds\Bigr) \le {1\over \pi} \int_0^\pi \E \exp \bigl(\pi K |J_1^j(s)|^2\bigr)\,ds\;.
\end{equ}
Since $Z_s^\eps$ is Gaussian with variance $\eps^{-1}$, we deduce that for any $K>0$ we can choose $\eps_0$ small 
enough so that the bound
$\E \exp(\eps^2 K |Z_s^\eps|^2) < 2$ holds for every $\eps < \eps_0$. The requested bound for $J_1$ thus follows from
\eref{e:boundJ11} and \eref{e:boundJ12}. 

It remains to bound the term involving $J_3^j$. We similarly obtain from Jensen's inequality that
\begin{equ}
\E \exp \Bigl(K\int_{-\infty}^0 e^{4s/\eps} |\tilde J_3^j(s)|^2\,ds\Bigr)
\le {4 \over \eps} \int_{-\infty}^0 e^{4s/\eps} \E \exp \Bigl({K\eps \over 4}  |\tilde J_3^j(s)|^2 \Bigr)\,ds\;.
\end{equ}
The requested bound then follows as before, using the explicit form of $\tilde J_3$.
\end{proof}

\appendix
\section{Semigroup bounds}

In this appendix, we collect some elementary results on the way that the semigroup $S^\eps_t$ introduced in
Section~\ref{sec:IM} approximates the damped heat semigroup $S_t$. To investigate this, we consider the semigroup
$\hat S_t$ generated by $-\d_x^4$ (always with periodic boundary conditions), and we use the fact that 
one has the identity
\begin{equ}[e:reprSeps]
S^\eps_t = \hat S_{\eps^2 t} S_t\;.
\end{equ}
The semigroup $\hat S$ can be described explicitly with the help of the kernel $\phi$ defined by
\begin{equ}
\phi(x) = {1\over 2\pi}\int_\R e^{-k^4-ikx}\,dk\;.
\end{equ}
With this definition, one has the identity
\begin{equ}[e:reprhatS]
\bigl(\hat S_t u\bigr)(x) = t^{-1/4}\int_\R \phi \bigl(y t^{-1/4}\bigr) u(x+y)\,dy\;,
\end{equ}
where we identify $u$ with its periodic continuation.
As a consequence of this representation, one has:

\begin{proposition}\label{prop:SC}
One has, $\sup_{t \le 1} \|\hat S_t\|_{\CC^\alpha \to \CC^\alpha} < \infty$.
\end{proposition}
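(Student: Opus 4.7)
The plan is to deduce the bound directly from the convolution representation \eref{e:reprhatS} by showing that the rescaled kernel has an $L^1$-norm that is uniformly bounded in $t$, and then invoking the standard principle that convolution against an $L^1$ kernel acts boundedly both on $\L^\infty$ and on the $\alpha$-H\"older seminorm.

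First I would observe that $\phi$ is the Fourier transform (up to a sign) of the Schwartz function $k \mapsto e^{-k^4}$, hence $\phi$ itself is Schwartz and in particular
\begin{equ}
M := \|\phi\|_{L^1(\R)} < \infty\;.
\end{equ}
Writing $(\hat S_t u)(x) = \int_\R \phi_t(y)\,u(x+y)\,dy$ with $\phi_t(y) := t^{-1/4}\phi(y t^{-1/4})$ and $u$ extended $2\pi$-periodically to $\R$, the substitution $z = y t^{-1/4}$ gives $\|\phi_t\|_{L^1(\R)} = M$ for every $t > 0$, so the family of convolution kernels is uniformly $L^1$.

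The $\L^\infty$-bound $\|\hat S_t u\|_\infty \le M\,\|u\|_\infty$ is then immediate. For the H\"older seminorm I would use
\begin{equ}
(\hat S_t u)(x) - (\hat S_t u)(x') = \int_\R \phi_t(y)\bigl[u(x+y) - u(x'+y)\bigr]\,dy\;,
\end{equ}
together with the elementary observation that the periodic extension satisfies $|u(a) - u(b)| \le C\,\|u\|_{\CC^\alpha}\, |a-b|^\alpha$ uniformly in $a,b \in \R$, for some constant $C$ depending only on $\alpha$: for $|a-b| \le \pi$ this follows from the H\"older seminorm applied after shifting both arguments by a common multiple of $2\pi$, while for $|a-b| > \pi$ it follows from the uniform bound $|u(a)-u(b)| \le 2\|u\|_\infty$. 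Since $a = x+y$ and $b = x'+y$ satisfy $|a-b| = |x-x'|$, the integrand is dominated by $C\,\|u\|_{\CC^\alpha}\,|x-x'|^\alpha\, |\phi_t(y)|$, and integrating yields $\|\hat S_t u\|_\alpha \le CM\, \|u\|_{\CC^\alpha}$.

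Combining these two estimates gives $\|\hat S_t\|_{\CC^\alpha \to \CC^\alpha} \le (C+1)M$ uniformly in $t > 0$ (and thus in particular for $t \le 1$). There is essentially no obstacle in this argument: the only point requiring a touch of care is the bookkeeping between the convolution kernel $\phi_t$, which is supported on all of $\R$, and the H\"older seminorm, which is defined on the circle $[0,2\pi]$; this is handled once and for all by the uniform comparison between the H\"older norm on the circle and the translation-invariant H\"older quantity on the periodic extension.
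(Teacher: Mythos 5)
Your argument is correct and is essentially the same as the paper's: both rest on the representation \eref{e:reprhatS} together with the fact that convolution against a kernel of uniformly bounded $\L^1$-norm (here scale-invariant under the $t^{-1/4}$ rescaling) is bounded on $\CC^\alpha$. You simply spell out the sup-norm and H\"older-seminorm estimates, and the comparison between the circle H\"older norm and its periodic extension, which the paper leaves implicit.
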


\begin{proof}
Since convolution with a periodic function $\psi$  is an operation on $\CC^\alpha$ with norm bounded
by the $\L^1$-norm of $\psi$, the claim now follows from \eref{e:reprhatS}, using the fact that $\phi$ is 
in $\L^1$ and that the scaling by $t^{-1/4}$ does not change its $\L^1$ norm.
\end{proof}

It follows that one has the following approximation result:

\begin{corollary}\label{cor:approxSHold}
For every  $\beta \in (0,1)$, every $T>0$, and every $\alpha \in (0,\beta)$, 
there exists a constant $C>0$ such that
\begin{equ}
\|S^\eps_t u - S_t u\|_{\CC^\alpha} \le C \eps^{{\beta - \alpha\over 2}}\;,
\end{equ}
uniformly for all $t \in [0,T]$ and all $u \in \CC^\beta$.
\end{corollary}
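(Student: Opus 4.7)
The plan is to reduce everything, via the semigroup identity \eref{e:reprSeps}, to a quantitative estimate on how close $\hat S_s$ is to the identity on H\"older spaces when $s\to 0$. Writing
\begin{equ}
S^\eps_t u - S_t u = \bigl(\hat S_{\eps^2 t} - I\bigr) S_t u\;,
\end{equ}
and noting that the damped heat semigroup $S_t = e^{-t}e^{t\d_x^2}$ is bounded on $\CC^\beta$ uniformly in $t\in[0,T]$, it suffices to establish the auxiliary bound
\begin{equ}[e:auxBound]
\|(\hat S_s - I)v\|_{\CC^\alpha} \le C\, s^{(\beta-\alpha)/4}\,\|v\|_{\CC^\beta}\;,
\end{equ}
uniformly for $s \in [0,T]$ and $v \in \CC^\beta$, since then applying \eref{e:auxBound} to $v = S_t u$ with $s = \eps^2 t$ yields the claim (with $C$ absorbing a factor $\|u\|_{\CC^\beta}$, as the statement implicitly requires).

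The first step is to obtain a supremum bound. Since $\int_\R\phi = \hat \phi(0) = 1$, the representation \eref{e:reprhatS} gives
\begin{equ}
(\hat S_s v)(x) - v(x) = \int_\R \phi(z)\bigl(v(x + z s^{1/4}) - v(x)\bigr)\,dz\;,
\end{equ}
and using $|v(x+zs^{1/4}) - v(x)| \le \|v\|_\beta |z|^\beta s^{\beta/4}$ together with the fact that $\int_\R |\phi(z)||z|^\beta\,dz < \infty$ (as $\phi \in \CS(\R)$), one obtains $\|(\hat S_s - I)v\|_\infty \le C s^{\beta/4}\|v\|_{\CC^\beta}$.

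For the $\alpha$-H\"older seminorm, I would split according to the ratio $|x-y|/s^{1/4}$. When $|x-y|\ge s^{1/4}$, the trivial bound
\begin{equ}
|(\hat S_s v - v)(x) - (\hat S_s v - v)(y)| \le 2\|(\hat S_s - I)v\|_\infty \le C s^{\beta/4}\|v\|_{\CC^\beta}
\end{equ}
divided by $|x-y|^\alpha\ge s^{\alpha/4}$ gives the required $s^{(\beta-\alpha)/4}\|v\|_{\CC^\beta}$. When $|x-y|\le s^{1/4}$, I bound $\hat S_s v$ and $v$ separately in $\CC^\beta$ using Proposition~\ref{prop:SC} to control $\|\hat S_s v\|_\beta\le C\|v\|_\beta$, giving
\begin{equ}
|(\hat S_s v - v)(x) - (\hat S_s v - v)(y)| \le C\|v\|_\beta |x-y|^\beta\;,
\end{equ}
so division by $|x-y|^\alpha$ yields $C\|v\|_\beta|x-y|^{\beta-\alpha}\le C\|v\|_\beta s^{(\beta-\alpha)/4}$. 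Combined with the $L^\infty$ bound, this establishes \eref{e:auxBound}.

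None of the individual steps is a serious obstacle; the only mildly delicate point is verifying $\int\phi = 1$ (which is immediate from the Fourier definition) and making sure that the case split above is carried out without picking up hidden logarithms. The whole argument is essentially a one-line interpolation between the endpoint bounds $\|(\hat S_s - I)v\|_\infty = O(s^{\beta/4})$ and $\|(\hat S_s - I)v\|_{\CC^\beta} = O(1)$.
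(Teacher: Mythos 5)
Your argument is correct and follows essentially the same route as the paper: the sup-norm bound $\|(\hat S_s - I)v\|_\infty \le C s^{\beta/4}\|v\|_{\CC^\beta}$ from the kernel representation \eref{e:reprhatS}, the uniform $\CC^\beta$-boundedness of $\hat S_s$ from Proposition~\ref{prop:SC}, and interpolation between these two endpoints, then concluding via the factorisation \eref{e:reprSeps}. Your explicit case split on $|x-y|$ versus $s^{1/4}$ is just the standard proof of the interpolation inequality that the paper invokes directly, and your remark about absorbing $\|u\|_{\CC^\beta}$ into the constant matches what the paper's own proof yields.
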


\begin{proof}
We note that it follows from \eref{e:reprhatS} that
\begin{equ}[e:boundreg]
\bigl|\bigl(\hat S_\eps u - u\bigr)(x)\bigr| \le \eps^{\beta/4} \|u\|_\beta \int_\R \eps^{-1/4} |\phi(x \eps^{-1/4})| |x \eps^{-1/4}|^\beta\,dx\le C \eps^{\beta/4} \|u\|_\beta\;.
\end{equ}
On the other hand, we know from Proposition~\ref{prop:SC} that the bound
\begin{equ}
\|\hat S_\eps u - u\|_{\CC^\beta} \le C \|u\|_{\CC^\beta} \;,
\end{equ}
is valid for $\eps < 1$, say. Combining these bounds, we conclude that
\begin{equ}
\|\hat S_\eps u - u\|_{\CC^\alpha} \le C \|\hat S_\eps u - u\|_{\CC^\beta}^{\alpha / \beta} \|\hat S_\eps u - u\|_\infty^{(\beta - \alpha)/\beta}
\le C \eps^{{\beta - \alpha\over 4}} \|u\|_{\CC^\beta}\;.
\end{equ}
The claim now follows from \eref{e:reprSeps}.
\end{proof}

\endappendix

\bibliographystyle{Martin}
\bibliography{./refs}

\end{document}